\newtheorem{lemma}{Lemma}
\newtheorem{proposition}{Proposition}
\newtheorem{theorem}{Theorem}
\newtheorem{conjecture}{Conjecture}
   \DeclareFontFamily{U}{wncy}{}
    \DeclareFontShape{U}{wncy}{m}{n}{<->wncyr10}{}
    \DeclareSymbolFont{mcy}{U}{wncy}{m}{n}
    \DeclareMathSymbol{\Sh}{\mathord}{mcy}{"58}
\newcommand{\sumflat}{\sideset{}{^{\flat}}\sum}
\begin{document}

\title[Central $L$-values of quadratic twists of elliptic curves] {Moments and distribution of central $L$-values of quadratic twists of elliptic curves} 
 \author{Maksym Radziwi\l\l \   and K. Soundararajan} 
 \address{School of Mathematics \\ Institute for Advanced Study \\ 1 Einstein Drive\\ Princeton, NJ, 08540, USA} 
 \email{maksym@ias.edu}
\address{Department of Mathematics \\ Stanford University \\
450 Serra Mall, Bldg. 380\\ Stanford \\ CA 94305-2125}
\email{ksound@math.stanford.edu}
\thanks{The first author was partially supported by NSF grant DMS-1128155. The second author is partially supported by NSF grant DMS-1001068, and a Simons Investigator award from the Simons Foundation} 
  
 \maketitle

\section{Introduction} 
 
\noindent The last two decades have seen tremendous progress in understanding the moments of central 
values in families of $L$-functions.  There are now precise, and widely believed, conjectured asymptotic formulae 
for moments in several important families (see \cite{CFKRS}, \cite{DGH}, \cite{KeSn1}, \cite{KeSn2}), and these conjectures have been verified in a variety of cases 
(see for example \cite{ChLi2}, \cite{CIS}, \cite{SoY}).   Furthermore, the work of Rudnick and Soundararajan (\cite{RuSo1}, \cite{RuSo2}), together with its extension by the authors in \cite{RaSo},
 produces lower bounds of the conjectured order of magnitude for all moments larger than the first, provided a little 
 more than the first moment can be computed.   In this paper, we enunciate a complementary principle, which (roughly speaking) establishes that if one can compute a little more than a particular moment for some family of $L$-functions, then 
 upper bounds of the conjectured order of magnitude hold for all smaller moments.  Conditional on the Generalized Riemann Hypothesis, the work of Soundararajan \cite{So} together with its sharp refinement by Harper \cite{Har} establishes the conjectured upper bounds for moments in many families; our work may be viewed as an unconditional analog of such results, but for a restricted  range of moments.   We shall illustrate our method for the important and well-studied family of quadratic twists of an elliptic curve.  Here the first moment for central $L$-values is known, but the second moment can (at present) only be calculated assuming GRH (by adapting the argument of \cite{SoY}).  
 However, there is enough flexibility for us to work out an upper bound for all moments below the first.  These ideas also enable us to study the distribution of the logarithm of the central $L$-values (when these are nonzero) and establish a one sided central limit theorem; this supports a conjecture of Keating and Snaith \cite{KeSn1}, and is an analog of Selberg's theorem 
 on the normal distribution of $\log |\zeta(\frac 12+it)|$.    Finally, our work leads to a conjecture on the distribution of the order of the Tate-Shafarevich group for rank zero quadratic twists of an elliptic curve, and establishes the upper 
 bound part of this conjecture (assuming the Birch-Swinnerton-Dyer conjecture).

Let us now describe our results more precisely.  Let $E$ be an elliptic curve defined over ${\Bbb Q}$ with conductor $N$.  
 Write the associated Hasse-Weil $L$-function as 
$$ 
L(s,E) = \sum_{n=1}^{\infty} a(n)n^{-s}, 
$$ 
where the coefficients are normalized such that the Hasse bound reads $|a(n)| \le  d(n)$ for all $n$, and so the 
center of the critical strip is $\frac 12$.   Recall that $L(s,E)$ has an analytic continuation to the entire complex plane and satisfies the functional equation 
$$ 
\Lambda(s,E)  = \epsilon_E \Lambda(1-s,E), 
$$ 
where $\epsilon_E$, the root number, is $\pm 1$, and 
$$ 
\Lambda(s,E) = \Big(\frac{\sqrt{N}}{2\pi}\Big)^{s} \Gamma(s+\tfrac 12) L(s,E). 
$$ 
Throughout the paper, $d$ will denote a fundamental discriminant coprime to $2N$, and $\chi_d =(\frac{d}{\cdot})$ the 
associated primitive quadratic character.  Let $E_d$ denote the quadratic twist of the elliptic curve 
$E$ by $d$.  The twisted $L$-function associated to $d$ is 
$$ 
L(s,E_d) = \sum_{n=1}^{\infty} a(n) \chi_d(n) n^{-s}.
$$ 
If $(d,N)=1$ then $E_d$ has conductor $Nd^2$ and the completed $L$-function 
$$ 
\Lambda(s,E_d)= \Big(\frac{\sqrt{N}|d|}{2\pi}\Big)^{s} \Gamma(s+\tfrac 12) L(s,E_d)
$$ 
is entire and satisfies the functional equation 
$$ 
\Lambda(s,E_d)= \epsilon_E(d) \Lambda(s,E_d) 
$$ 
with 
$$ 
\epsilon_E(d) = \epsilon_E \chi_d(-N). 
$$ 
Note that, by Waldspurger's theorem, $L(\tfrac 12, E_d) \ge 0$. Of course $L(\tfrac 12, E_d)=0$ when 
$\epsilon_E(d)=-1$, and in this paper, we shall restrict attention to those twists with root number $1$.  Put therefore
$$ 
{\mathcal E} = \{ d: \ \ d \text{ a fundamental discriminant with } (d, 2N)=1 \text{ and } \epsilon_{E}(d)=1\}. 
$$ 

Our first result concerns the moments of $L(\frac 12, E_d)$.  Keating and Snaith \cite{KeSn1} have conjectured that 
for all real numbers $k\ge 0$, 
$$ 
\sum_{\substack{{|d|\le X} \\ {d\in {\mathcal E}}}} L(\tfrac 12,E_d)^{k} \sim C_0(k,E) X(\log X)^{\frac{k(k-1)}{2}},
$$ 
for a specified non-zero constant $C_0(k,E)$.   As indicated earlier, this 
conjecture is known for $k=1$, and on GRH for $k=2$.   We establish a sharp upper bound for all $k$ 
between $0$ and $1$. 

\begin{theorem}  \label{thm1} 
Let $0\le k\le 1$ be a real number.   For all large $X$ we have 
$$ 
\sum_{\substack{{|d|\le X} \\ {d\in {\mathcal E}}}} L(\tfrac 12, E_d)^k \le C(k,E) X (\log X)^{\frac{k(k-1)}{2}}, 
$$ 
for a positive constant $C(k,E)$.  
\end{theorem}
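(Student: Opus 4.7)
The plan is to adapt the upper-bound framework of Soundararajan \cite{So} and Harper \cite{Har}, which on GRH yields sharp upper bounds for moments of $\zeta$ and of $L$-functions in families, to the present setting unconditionally but restricted to $0 \le k \le 1$.  The target bound $X(\log X)^{k(k-1)/2}$ is a saving of $(\log X)^{k(1-k)/2}$ over the trivial H\"older estimate
$$
\sum_{\substack{|d|\le X\\ d\in\mathcal{E}}} L(\tfrac12, E_d)^k \le \Bigl(\sum_{\substack{|d|\le X\\ d\in\mathcal{E}}} L(\tfrac12, E_d)\Bigr)^k (\#\mathcal{E})^{1-k} \ll X,
$$
and this saving must come from the (conjecturally) log-normal character of the distribution of $L(\tfrac12, E_d)$, namely that these central values are typically small.

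I would fix a parameter $z = X^{\theta}$ for a small $\theta > 0$, and for $d \in \mathcal{E}$ introduce the short Euler product
$$
\mathcal{N}(d) = \prod_{p \le z}\Bigl(1 - \frac{a(p)\chi_d(p)}{\sqrt p} + \frac{\chi_d(p)^2}{p}\Bigr)^{-1},
$$
which is a Dirichlet-polynomial approximation to $L(\tfrac12, E_d)$.  Applying H\"older with conjugate exponents $1/k$ and $1/(1-k)$,
$$
\sum_{\substack{|d|\le X\\ d\in\mathcal{E}}} L(\tfrac12, E_d)^k \le \Bigl(\sum_d L(\tfrac12, E_d)\,\mathcal{N}(d)^{-(1-k)}\Bigr)^{k} \Bigl(\sum_d \mathcal{N}(d)^{k}\Bigr)^{1-k},
$$
and the two resulting sums are handled by complementary methods.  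For the first factor, expanding $\mathcal{N}(d)^{-(1-k)}$ by the binomial theorem yields an absolutely convergent Dirichlet polynomial in $\chi_d$ supported on $z$-smooth integers $n$, and the problem reduces to estimates for the twisted first moment $\sum_d L(\tfrac12, E_d)\chi_d(n)$ for such $n$.  This is precisely the ``little more than the first moment'' input alluded to in the introduction, and a contour shift around $s = \tfrac12$ then evaluates the resulting Euler product in terms of $\log X$.  For the second factor, $\mathcal{N}(d)^{k}$ is a pure Dirichlet polynomial in $\chi_d$; its moments over $d \in \mathcal{E}$ are accessible via orthogonality ($\sum_{|d|\le X} \chi_d(n) \approx 0$ for non-square $n$), yielding a sub-Gaussian-type estimate by the standard moment method.

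The main obstacle is obtaining \emph{precisely} the exponent $k(k-1)/2$.  A naive choice of $\mathcal{N}$ and a single application of H\"older yield a power of $\log X$ that is off, and one must design $\mathcal{N}$, in practice split into dyadic scales \`a la Harper, so that the apparent pole at $s=1$ arising in the first-moment main term is cancelled, and so that the two H\"older factors, of sizes $X(\log X)^{\alpha(k)}$ and $X(\log X)^{\beta(k)}$, balance in such a way that $k\alpha(k) + (1-k)\beta(k) = k(k-1)/2$ after an optimal choice of $\theta$.  A secondary but non-trivial issue is controlling the exceptional $d$ on which the approximation $\mathcal{N}(d) \approx L(\tfrac12, E_d)$ fails; these are absorbed via a crude Cauchy--Schwarz coupled to the first moment itself.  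Implementing Harper's multi-scale decomposition in a family of quadratic twists unconditionally, rather than under GRH as in \cite{So,Har}, is the technical heart of the argument.
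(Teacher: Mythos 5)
Your outline is close in spirit to the paper's argument: both rest on Young's/H\"older's inequality with exponents $1/k$ and $1/(1-k)$, both split the primes up to a small power of $X$ into Harper-style bands $P_1,\dots,P_R$, and both reduce the two resulting sums to Proposition~\ref{PropDirpoly} (orthogonality of $\chi_d$) and Proposition~\ref{Prop1} (the first moment twisted by a short Dirichlet polynomial). But there is a genuine gap in the middle of your argument that the paper's key lemmas are designed precisely to close, and your proposed fix does not address it.

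The problem is that $\mathcal{N}(d)^{-(1-k)}$ and $\mathcal{N}(d)^k$ are not Dirichlet \emph{polynomials}; they are infinite Dirichlet series over $z$-smooth integers, and neither Proposition~\ref{PropDirpoly} nor Proposition~\ref{Prop1} applies without first truncating to $n$ at most some small power of $X$. After H\"older is applied to the sum over $d$, truncating the series destroys the inequality: the tail has no sign, so discarding it can increase $\sum_d L(\tfrac12,E_d)\,\mathcal{N}(d)^{-(1-k)}$, and controlling that tail on average is exactly the kind of moment information one does not yet have. Your suggestion that the exceptional $d$ are those where $\mathcal{N}(d)\approx L(\tfrac12,E_d)$ fails, to be absorbed by Cauchy--Schwarz, misidentifies the difficulty: H\"older requires no approximation at all, and Cauchy--Schwarz against the first moment only reproduces the trivial bound $\ll X$. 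The paper instead applies Young's inequality \emph{pointwise} in $d$, writing $y^k \le k\,y\,e^{(k-1)\sum_j x_j} + (1-k)e^{k\sum_j x_j}$ with $y = L(\tfrac12,E_d)(\log|d|)^{1/2}/L_a(\tfrac12)$ and $x_j=\mathcal P_j(d)$, and then replaces each factor $e^{cx_j}$ by the degree-$\ell_j$ Taylor truncation $E_{\ell_j}(cx_j)$. The crucial device is Lemma~\ref{E1}: for $\ell$ even and $|x|\le \ell/e^2$, one has $e^{cx}\le(1+e^{-\ell}/16)\,E_\ell(cx)$ pointwise and with the correct sign, so $E_{\ell_j}(\cdot)$ is a genuine Dirichlet polynomial of length $\le X^{1/\ell_j}$ and the inequality survives. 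Discriminants with $|\mathcal P_{r+1}(d)|>\ell_{r+1}/e^2$ for some $r$ are not swept up by Cauchy--Schwarz but by the extra factor $(e^2\mathcal P_{r+1}(d)/\ell_{r+1})^{\ell_{r+1}}\ge 1$ in Lemma~\ref{E2}, which is again a short Dirichlet polynomial and is small on average (Proposition~\ref{Prop3}/\ref{Prop4} show each such term loses a factor $e^{-c\ell_{r+1}}$). Without some version of this ``truncated exponential plus penalty term'' mechanism, the passage from the Euler products $\mathcal{N}(d)^{\pm}$ to short Dirichlet polynomials is not justified and your proof does not close.
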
 

By choosing $k$ small but positive in Theorem \ref{thm1}, we see that $L(\tfrac 12, E_d) = O((\log |d|)^{-\frac 12+\epsilon})$  
for all but $o(X)$ fundamental discriminants $|d|\le X$ with $d\in {\mathcal E}$.   More is expected to be true, and Keating 
and Snaith \cite{KeSn2} have conjectured that, for $d\in {\mathcal E}$,  the quantity $\log L(\tfrac 12,E_d)$ has a  normal 
distribution with mean $-\frac 12 \log \log |d|$ and variance $\log \log |d|$; see \cite{CKRS} for numerical data towards 
this conjecture.   Here we interpret $\log L(\tfrac 12, E_d)$ 
to be negative infinity when the $L$-value vanishes.  This conjecture is an analog of Selberg's theorem on the 
normal distribution of $\log |\zeta(\frac 12+it)|$.  However the Keating-Snaith conjecture appears quite 
difficult to prove; for example, it implies the well-known conjecture of Goldfeld \cite{Gold1} that $L(\tfrac 12, E_d) \neq 0$ for 
almost all $d\in {\mathcal E}$.  We are able to prove part of the Keating-Snaith conjecture, and establish 
the conjectured upper bound for the proportion of $d\in {\mathcal E}$ with $\log L(\tfrac 12, E_d) + \frac 12\log \log |d| 
\ge V \sqrt{\log \log |d|}$ for any fixed real number $V$. 

\begin{theorem} \label{thm2} Let $V$ be a fixed real number.  For large $X$ we have 
$$ 
\Big| \Big\{ d\in {\mathcal E}, \ 20  < |d|\le X: \ \  \frac{\log L(\frac 12,E_d)+\frac 12 \log \log |d|}{\sqrt{\log \log |d|}} \ge V\Big\} \Big| 
$$ 
is at most 
$$
 | \{ d\in {\mathcal E}, |d|\le X\}| \Big( \frac{1}{\sqrt{2\pi}} \int_{V}^{\infty} e^{-\frac{x^2}2} dx +o(1)\Big). 
$$
\end{theorem}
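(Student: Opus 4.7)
The plan is to reduce the distribution of $\log L(\tfrac12, E_d)+\tfrac12\log\log|d|$ to that of a short prime sum, and to obtain Gaussian tails for the prime sum by the method of moments. Set
$$
\mathcal{P}(d;x) \assign \sum_{p\le x}\frac{a(p)\chi_d(p)}{\sqrt p},
$$
where $x=x(X)$ grows slowly enough that mean-value techniques apply, yet fast enough that $\log\log x=(1+o(1))\log\log X$ (for instance $x=(\log X)^{\log\log\log X}$). The two ingredients are a Gaussian limit for $\mathcal{P}(d;x)/\sqrt{\log\log X}$ and an unconditional upper-bound principle of the shape
$$
\log L(\tfrac12, E_d) \;\le\; \mathcal{P}(d;x) \;-\; \tfrac12\log\log X \;+\; o\bigl(\sqrt{\log\log X}\bigr),
$$
valid for all but $o(X)$ of the $d\in\mathcal{E}$ with $|d|\le X$.

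The Gaussian limit is the routine step. For distinct primes $p_1,\dots,p_r\nmid 2N$, the average of $\chi_d(p_1)\cdots\chi_d(p_r)$ over fundamental discriminants $d\in\mathcal{E}$ with $|d|\le X$ tends to $0$ unless $p_1\cdots p_r$ is a perfect square, in which case it tends to $1$. Expanding $\mathcal{P}(d;x)^{2k}$ multinomially and using this orthogonality, together with the Rankin--Selberg bound $\sum_{p\le x}a(p)^2/p=\log\log x+O(1)$, yields for $k\ll \log X/\log x$
$$
\frac{1}{|\{d\in\mathcal{E}:|d|\le X\}|}\sum_{\substack{|d|\le X\\ d\in\mathcal{E}}}\mathcal{P}(d;x)^{2k} \;=\; \frac{(2k)!}{2^k\, k!}\bigl(\log\log X\bigr)^k\bigl(1+o(1)\bigr),
$$
while the odd moments are negligible. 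These match the moments of a Gaussian of variance $\log\log X$, so by the method of moments $\mathcal{P}(d;x)/\sqrt{\log\log X}$ converges in distribution to a standard normal as $X\to\infty$.

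The upper-bound principle is the crux. I would use a Selberg--Soundararajan type inequality bounding $\log L(\tfrac12, E_d)$ above by a Dirichlet polynomial over prime powers $p^j$ with $p^j\le x$, plus a correction involving zeros of $L(s,E_d)$ close to the central point. The $j=1$ terms reproduce $\mathcal{P}(d;x)$; the $j=2$ terms provide the deterministic shift, since $\chi_d(p)^2=1$ at primes $p\nmid d$ and $\alpha_p^2+\beta_p^2=a(p)^2-2$, so that a further application of Rankin--Selberg gives
$$
\sum_{p\le \sqrt x}\frac{\alpha_p^2+\beta_p^2}{2p}\chi_d(p)^2 \;=\; -\tfrac12\log\log X + O(1),
$$
which is the source of the centring in Theorem~\ref{thm2}; higher prime powers contribute only $O(1)$. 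The correction from zeros is the only nontrivial error, and here Theorem~\ref{thm1} plays the essential role: taking $k$ close to $1$ shows that the set of $d\in\mathcal{E}$ with $L(\tfrac12,E_d)$ abnormally large has size $o(X)$, and an analogous argument applied to mollified variants controls the $d$ whose $L(s,E_d)$ has zeros close to $s=\tfrac12$. Off this exceptional set the error in the inequality is $o(\sqrt{\log\log X})$, and combining with the Gaussian limit of the previous step yields Theorem~\ref{thm2}.

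The main obstacle is the unconditional upper-bound principle itself. Without GRH the Euler product cannot simply be truncated at a short cutoff, so one must use an analytic inequality of Soundararajan type and balance the length $x$ against the range of exponents in Theorem~\ref{thm1} employed to discard the exceptional set of $d$. Pushing the remaining error down to $o(\sqrt{\log\log X})$ --- small enough not to spoil the Gaussian scale --- is the delicate point of the argument, and it is precisely this interplay that makes the restricted-moment estimate of Theorem~\ref{thm1} indispensable. Note that the one-sided nature of Theorem~\ref{thm2} is intrinsic to this approach: a matching lower bound would require controlling the lower tail of $\log L$, for which one would need (at least) Goldfeld's non-vanishing conjecture.
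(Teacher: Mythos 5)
Your first ingredient --- the Gaussian limit for $\mathcal{P}(d;x)/\sqrt{\log\log X}$ via the method of moments, character orthogonality, and Rankin--Selberg --- matches the paper's Proposition~\ref{Prop7} exactly, including the choice of cutoff $x = X^{1/(\log\log X)^2}$ (your $x=(\log X)^{\log\log\log X}$ is too short; the paper needs to sum ${\mathcal P}(d)^k$ for $k$ up to roughly $\log\log X$, which forces the support to be a power of $X$).

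The gap is in your ``upper-bound principle.'' You propose a Selberg--Soundararajan inequality of the form $\log L(\tfrac12,E_d)\le$ (short prime sum) $+$ (zero correction), and then to control the zero correction via Theorem~\ref{thm1} and ``mollified variants.'' This cannot work as stated. The Soundararajan inequality is conditional on GRH precisely because without GRH the zero term can be positive and is uncontrolled; what you would need to bound it for almost all $d$ is a zero-density estimate near $s=\tfrac12$ for the family $\{L(s,E_d)\}$, and the paper explicitly flags that such estimates ``remain elusive'' for quadratic twists of an elliptic curve. Taking $k\to 1^-$ in Theorem~\ref{thm1} bounds the frequency of large $L(\tfrac12,E_d)$, but there is no route from that to controlling low-lying \emph{zeros}, and ``mollified variants'' of Theorem~\ref{thm1} are not available here (the paper cannot even compute the second moment unconditionally). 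You have correctly identified that this is the delicate point, but your proposed resolution is exactly the obstacle.

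The paper sidesteps the explicit formula entirely. It partitions the event $\{\log L(\tfrac12,E_d)+\tfrac12\log\log X\ge V\sqrt{\log\log X}\}$ into three cases: (1) $\mathcal{P}(d)\ge(V-\epsilon)\sqrt{\log\log X}$, handled by the Gaussian moments of $\mathcal{P}(d)$; (2) $\mathcal{P}(d)\le-\log\log X$, discarded by Chebyshev via the second moment of $\mathcal{P}(d)$; and (3) $\mathcal{P}(d)$ in the middle range but $L(\tfrac12,E_d)(\log X)^{1/2}\exp(-\mathcal{P}(d))\ge\exp(\epsilon\sqrt{\log\log X})$. Case (3) is the heart of the matter and replaces your zero-correction step. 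Using the elementary inequality $e^{-t}\le E_\ell(-t)$ for $t\le\ell/e^2$ from Lemma~\ref{E1}, one replaces $\exp(-\mathcal{P}(d))$ by the polynomial $E_\ell(-\mathcal{P}(d))$ with $\ell\asymp\log\log X$, turning $L(\tfrac12,E_d)\exp(-\mathcal{P}(d))$ into $L(\tfrac12,E_d)$ times a short Dirichlet polynomial. The average of this over $d$ is then controlled by a first-moment computation (Proposition~\ref{Prop1} and the Rankin-trick machinery behind Proposition~\ref{Prop4}), and Chebyshev discards case (3) as well. This is the unconditional substitute for the explicit formula: no information about zeros is used at all, only the first moment twisted by a short Dirichlet polynomial.
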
 

The connection between moments and analogs of Selberg's theorem for 
central values in families of $L$-functions is discussed in \cite{So}, where the possibility 
of establishing upper bounds as in Theorem \ref{thm2} conditional on the Generalized Riemann Hypothesis 
is mentioned.  Furthermore, if in addition to GRH one assumes the one level density conjectures of Katz and Sarnak \cite{KaSa} 
then the full Keating-Snaith conjecture on the normality of $\log L(\tfrac 12,E_d)$ would follow.  In certain families of $L$-functions, Hough \cite{Ho} has 
established unconditionally analogs of our Theorem 2.   Hough's method relies on zero density results which show that 
most zeros of the $L$-functions under consideration lie near the critical line.  While such zero density results are known in  a 
number of cases, the family of quadratic twists of an elliptic curve is an example where such results remain elusive.   The method 
used here is different (and perhaps simpler) and uses only knowledge about the first moment (plus epsilon) in the family.  

In view of the Birch and Swinnerton-Dyer conjectures, our work contributes to the understanding of the distribution of 
the order of the Tate-Shafarevich group for those quadratic twists with analytic rank zero.  Define, for $d\in {\mathcal E}$,  
$$ 
S(E_d) = L(\tfrac 12, E_d) \frac{|E_d({\Bbb Q})_{\text{tors}}|^2}{\Omega(E_d) \text{Tam}(E_d)},  
$$ 
where $|E_d({\Bbb Q})_{\text{tors}}|$ denotes the size of the rational torsion group of $E_d$, $\Omega(E_d)$ denotes the real period of a minimal model for $E_d$, and $\text{Tam}(E_d) = \prod_{p} T_p(d)$ is the product of the Tamagawa numbers.   If $L(\tfrac 12, E_d) \neq 0$ then the Birch and Swinnerton-Dyer conjecture predicts that $S(E_d)$ equals the order of 
the Tate-Shafarevich group $\Sh(E_d)$.    Now $\Omega(E_d)$ is about size $1/\sqrt{|d|}$, and the Tamagawa factors 
$T_p(d)$ are generically $1$ and for $p|d$ equal one more than the number of roots of $f(x)  \pmod p$ if $E$ is 
represented in Weierstrass form as $y^2=f(x)$.   Thus, the behavior of these quantities for large $|d|$ is relatively straightforward, and combining this with the Keating-Snaith conjecture for $L(\tfrac 12,E_d)$, we are led to formulate the following conjecture.

\begin{conjecture} \label{Conj1}  Let $E$ be given by the model $y^2=f(x)$ for a monic cubic polynomial $f$ with integer 
coefficients.  Let $K$ denote the splitting field of $f$ over ${\Bbb Q}$.   Define the constants $\mu(E)$ and $\sigma(E)$ as 
follows:  If $K={\Bbb Q}$ so that $E$ has full $2$-torsion, set 
$$ 
\mu(E) = -\tfrac 12 - 2 \log 2, \qquad \sigma(E)^2 = 1+ 4 (\log 2)^2. 
$$ 
If $[K:{\Bbb Q}]=2$ so that $E$ has partial $2$-torsion, set 
$$ 
\mu(E) = -\tfrac 12 -\tfrac 32 \log 2, \qquad \sigma(E)^2 = 1+ \tfrac 52 (\log 2)^2. 
$$ 
If $[K:{\Bbb Q}]=3$, then set 
$$ 
\mu(E) = -\tfrac 12 -\tfrac 23 \log 2, \qquad \sigma(E)^2 = 1+ \tfrac 43 (\log 2)^2. 
$$ 
Lastly if $[K:{\Bbb Q}]=6$, then set 
$$ 
\mu(E) =-\tfrac 12 - \tfrac 56 \log 2, \qquad \sigma(E)^2 = 1 + \tfrac 76 (\log 2)^2. 
$$ 
As $d$ ranges over ${\mathcal E}$, the distribution of $\log (|\Sh(E_d)|/\sqrt{|d|})$ is approximately 
Gaussian with mean $\mu(E) \log \log |d|$ and variance $\sigma(E)^2 \log \log |d|$.   More precisely, for any fixed $V \in {\Bbb R}$ 
and as $X\to \infty$, 
$$ 
\Big| \Big\{ d\in {\mathcal E}, 20< |d|\le X: \ \ \frac{ \log (|\Sh(E_d)|/\sqrt{|d|}) - \mu(E) \log \log |d|}{\sqrt{\sigma(E)^2 \log \log |d|}} 
 \ge V \Big\} \Big| 
$$ 
is 
$$ 
\sim | \{ d\in {\mathcal E}, |d|\le X\}  \Big(\frac{1}{\sqrt{2\pi}} \int_{V}^{\infty} e^{-\frac{x^2}{2}} dx \Big). 
$$   
\end{conjecture}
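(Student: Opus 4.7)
The plan is to leverage the Birch--Swinnerton-Dyer formula to reduce the conjecture to a joint central limit theorem for $\log L(\tfrac12, E_d)$ and $\log \text{Tam}(E_d)$. From the definition of $S(E_d)$, together with the standard asymptotic $\Omega(E_d) \asymp 1/\sqrt{|d|}$ and Mazur's uniform bound $|E_d(\mathbb{Q})_{\text{tors}}| = O(1)$, one obtains
\[
 \log\!\Big(\frac{|\Sh(E_d)|}{\sqrt{|d|}}\Big) \;=\; \log L(\tfrac12, E_d) \;-\; \log \text{Tam}(E_d) \;+\; O(1)
\]
for $d \in \mathcal{E}$ outside a negligible exceptional set. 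The goal is to prove that this random variable, after centering by $\mu(E)\log\log|d|$ and rescaling by $\sigma(E)\sqrt{\log\log|d|}$, converges in distribution to a standard normal, which delivers the full two-sided ``$\sim$'' asserted in the conjecture.

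I would next treat the Tamagawa term by an Erd\H{o}s--Kac argument. Writing $T_p(d) = 1 + r_p$ where $r_p$ is the number of roots of $f$ modulo $p$, the value of $r_p$ is determined by the Frobenius conjugacy class at $p$ in $\mathrm{Gal}(K/\mathbb{Q})$, so Chebotarev supplies the exact densities of $\log T_p$ among primes. A case-by-case check reproduces the constants in the conjecture: for instance, when $[K:\mathbb{Q}]=2$, $\log T_p \in \{2\log 2, \log 2\}$ with density $\tfrac12$ each, producing per-prime mean $\tfrac32 \log 2$ and variance $\tfrac14(\log 2)^2$; weighting by the probability $1/p$ that $p \mid d$ and summing, one gets mean $\tfrac32(\log 2)\log\log|d|$ and variance $\tfrac52(\log 2)^2\log\log|d|$, which are precisely $-\mu(E)-\tfrac12$ and $\sigma(E)^2-1$. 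A standard moment-method argument then upgrades this to a Gaussian limit for $\log\text{Tam}(E_d)$ as $d$ varies in $\mathcal{E}$.

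The main step is a Keating--Snaith-type two-sided central limit theorem for $\log L(\tfrac12, E_d)$, asserting asymptotic normality with mean $-\tfrac12\log\log|d|$ and variance $\log\log|d|$. The upper tail is Theorem~\ref{thm2}, which rests on the sharp moment bound of Theorem~\ref{thm1} for $0 \le k \le 1$. For the matching lower tail one needs lower bounds on fractional and negative moments $\sum_{|d| \le X,\, d \in \mathcal{E}} L(\tfrac12, E_d)^k$ of the conjectured order, extending the Rudnick--Soundararajan--type machinery of \cite{RuSo1, RuSo2, RaSo} into the negative and fractional ranges; equivalently, one would establish a mollified Dirichlet polynomial model $\log L(\tfrac12, E_d) \approx \sum_{p \le X^\theta} a(p)\chi_d(p)/\sqrt{p}$ valid for almost all $d$, and then invoke Chebotarev for $a(p)$ to show Gaussianity of the polynomial. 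Along the way one would necessarily prove Goldfeld's conjecture that $L(\tfrac12, E_d) \ne 0$ for almost all $d \in \mathcal{E}$, without which the statement of the conjecture is not well-posed.

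The concluding step is joint Gaussianity of $\log L(\tfrac12, E_d)$ and $\log\text{Tam}(E_d)$. The Tamagawa fluctuations are carried by prime divisors of $d$ below a threshold like $(\log X)^A$, whereas, in the spirit of Selberg's theorem, the Gaussian fluctuations of $\log L(\tfrac12, E_d)$ are captured by a Dirichlet polynomial over a moderately large prime range above that threshold; a joint moment computation over these disjoint prime ranges should yield vanishing covariance, and hence the claimed Gaussian with variance $\sigma(E)^2\log\log|d|$. The dominant obstacle, by a wide margin, is the Keating--Snaith CLT for $\log L(\tfrac12, E_d)$ itself: the methods of this paper give only the upper tail, and the matching lower tail in this family appears to require either GRH (to supply the second moment needed for a Selberg-type argument) or a genuinely new unconditional input beyond ``a little more than the first moment''.
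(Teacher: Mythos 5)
This statement is a conjecture: the paper gives no proof, only the heuristic you reconstruct --- the BSD formula converts $|\Sh(E_d)|/\sqrt{|d|}$ into $L(\tfrac12,E_d)/\mathrm{Tam}(E_d)$ up to bounded factors, Chebotarev densities for $c(p)$ (Lemma~\ref{Tam}) produce the constants $\mu(E)$, $\sigma(E)^2$, and the Keating--Snaith conjecture supplies the Gaussian for $\log L(\tfrac12,E_d)$ --- and the only proved result is the one-sided upper bound, Theorem~\ref{thm3}. So your proposal follows essentially the same route as the paper's motivation, and you correctly identify that the two-sided CLT for $\log L(\tfrac12,E_d)$ (which would subsume Goldfeld's conjecture) is the open core that the paper does not claim to supply; your honest assessment that the lower tail is out of reach of these methods matches the authors' own position. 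One bookkeeping remark: the relevant variance for the Tamagawa sum accumulates from the second moments $\sum_p (\log c(p))^2/p$ (since the events $p \mid d$ occur with probability about $1/p$, it is the second moment, not the per-prime variance $\tfrac14(\log 2)^2$ you quote, that matters), which is exactly how Lemma~\ref{Tam} is phrased; your final figure $\tfrac52(\log 2)^2\log\log|d|$ is nonetheless the correct one.
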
 

Previously, Delaunay \cite{Del1} has studied the moments of orders of Tate-Shafarevich groups, and 
formulated analogs of the Keating-Snaith conjectures for the average values of $S(E_d)^k$.  Our conjecture 
is naturally closely related to his work; see also the related papers \cite{Del2} and \cite{DelWat}.   
In support of our conjecture, we are able to establish an upper bound for the distribution of values of $\log (S(E_d)/\sqrt{|d|})$ 
(as before, interpreting this quantity as $-\infty$ if $L(\tfrac 12,E_2)=0$).   

\begin{theorem} \label{thm3}  With notations as above, for fixed $V\in {\Bbb R}$ and as $X\to \infty$, 
$$ 
\Big| \Big\{ d\in {\mathcal E}, 20< |d|\le X: \ \ \frac{ \log (|S(E_d)|/\sqrt{|d|}) - \mu(E) \log \log |d|}{\sqrt{\sigma(E)^2 \log \log |d|}} 
 \ge V \Big\} \Big| 
$$ 
is bounded above by 
\begin{equation} 
\label{1.1} 
| \{ d\in {\mathcal E}, |d|\le X\}  \Big(\frac{1}{\sqrt{2\pi}} \int_{V}^{\infty} e^{-\frac{x^2}{2}} dx +o(1)\Big). 
\end{equation} 
If the Birch and Swinnerton-Dyer conjecture for elliptic curves with analytic rank zero holds then the 
quantity in \eqref{1.1} is an upper bound for  
$$ 
\Big| \Big\{ d\in {\mathcal E}, 20< |d|\le X: \ \ L(\tfrac 12, E_d)\neq 0, \ \ 
 \frac{ \log (|\Sh(E_d)|/\sqrt{|d|}) - \mu(E) \log \log |d|}{\sqrt{\sigma(E)^2 \log \log |d|}} 
 \ge V \Big\} \Big|.
 $$
\end{theorem}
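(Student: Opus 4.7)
The plan is to reduce Theorem~\ref{thm3} to a weighted moment bound of the shape
$$
\sum_{\substack{|d|\le X \\ d \in {\mathcal E}}} L(\tfrac 12, E_d)^k \prod_{\substack{p\mid d\\ p\nmid N}} T_p(d)^{-k} \ \ll\ X (\log X)^{\sigma(E)^2 k^2/2 + \mu(E) k}, \qquad 0 \le k \le 1,
$$
from which the Gaussian upper bound follows by Markov's inequality, in complete parallel with the passage from Theorem~\ref{thm1} to Theorem~\ref{thm2}.

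First I would separate the $L$-value from the arithmetic data. Mazur's bound gives $|E_d({\Bbb Q})_{\text{tors}}|=O(1)$; one has $\Omega(E_d)=\Omega_E/\sqrt{|d|}$ up to a bounded factor depending only on bad primes; and for $p\mid d$ with $p\nmid 2N$ the Tamagawa number is $T_p(d)=1+\#\{x\bmod p:f(x)\equiv 0\}$. Assembling these yields
$$
\log\bigl(S(E_d)/\sqrt{|d|}\bigr)=\log L(\tfrac 12, E_d) - T(d) + O(1), \qquad T(d):=\sum_{p\mid d,\ p\nmid N}\log T_p(d),
$$
so the event in Theorem~\ref{thm3} becomes $\log L(\tfrac 12, E_d)-T(d)\ge \mu(E)\log\log|d|+V\sigma(E)\sqrt{\log\log|d|}$ (the $O(1)$ being absorbed in the $o(1)$). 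The Birch--Swinnerton-Dyer half of the theorem is then immediate, since under BSD $|\Sh(E_d)|=S(E_d)$ whenever $L(\tfrac 12, E_d)\neq 0$, and the vanishing locus is excluded by the $-\infty$ convention. The Markov step is standard: choosing $k=V/(\sigma(E)\sqrt{\log\log X})$ and applying the moment bound yields $X\exp(\sigma(E)^2 k^2 \log\log X/2 - kV\sigma(E)\sqrt{\log\log X})=Xe^{-V^2/2}(1+o(1))$.

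The principal obstacle is the weighted moment bound itself. To establish it I would adapt the proof of Theorem~\ref{thm1}, approximating $\log L(\tfrac 12, E_d)\approx P(d):=\sum_{p\le Y}a(p)\chi_d(p)/\sqrt{p}$ and computing the $d$-average of $e^{kP(d)}\prod_{p\mid d}T_p(d)^{-k}$. The local factor at each prime $p\le Y$ splits according to whether $p\mid d$: when $p\nmid d$ (density $\approx 1-1/p$) one recovers the usual $\cosh(ka(p)/\sqrt{p})$ responsible for the $k^2 a(p)^2/(2p)$ contribution to the logarithm; when $p\mid d$ (density $\approx 1/p$) the local Euler factor of $L(s,E_d)$ at $p$ is trivial (additive reduction) and one instead picks up $T_p(d)^{-k}$, whose average across $d$ divisible by $p$ is computed by Chebotarev applied to the splitting of $p$ in $K$, giving $|G|^{-1}\sum_{g\in G}(1+\#\mathrm{Fix}(g))^{-k}$ with $G=\mathrm{Gal}(K/{\Bbb Q})$. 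Expanding to order $k^2$ and summing the resulting Euler product over $p\le Y$, the coefficient of $k$ equals $-E_{\mathrm{Cheb}}[\log T_p]$ and the coefficient of $k^2$ equals $E_{\mathrm{Cheb}}[(\log T_p)^2]/2$; together with the usual $k(k-1)/2$ from $L^k$ these are checked case-by-case to reproduce $\mu(E)$ and $\sigma(E)^2$ in each of the four Galois cases $|G|=1,2,3,6$ listed in Conjecture~\ref{Conj1}.

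The key technical challenge is to insert this Chebotarev-averaged Tamagawa factor into the non-integer $k$ upper-bound moment machinery developed in the proof of Theorem~\ref{thm1} (which itself rests on the Soundararajan--Harper method as adapted in \cite{RaSo}), all while retaining the precise exponent demanded by the Markov optimization. In particular, the Dirichlet polynomial truncation error and the short/long decomposition underlying the Harper refinement must not introduce so much as a $(\log X)^{\varepsilon}$ loss in the final exponent, since any such loss would degrade the sharp Gaussian constant $e^{-V^2/2}$ and fail to match the conjectured variance $\sigma(E)^2$.
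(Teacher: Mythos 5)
Your top-level reduction — a weighted moment bound followed by Markov/Chernoff — cannot produce the sharp Gaussian constant that the theorem asserts, and this is a genuine gap. Optimizing at $k=V/(\sigma(E)\sqrt{\log\log X})$ yields, as you compute, an upper bound of order $Xe^{-V^2/2}$; but the theorem claims the strictly smaller bound $|\{d\in{\mathcal E}: |d|\le X\}|\bigl(\frac{1}{\sqrt{2\pi}}\int_V^\infty e^{-x^2/2}\,dx + o(1)\bigr)$, and one has $\frac{1}{\sqrt{2\pi}}\int_V^\infty e^{-x^2/2}\,dx < \frac{1}{\sqrt{2\pi}}e^{-V^2/2}<e^{-V^2/2}$ for every $V$ (indeed smaller by a factor roughly $\sqrt{2\pi}\,V$ for $V$ large), while $|{\mathcal E}\cap[1,X]|\sim cX$ with $c<1$. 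So the Chernoff bound is off by a fixed factor $>1$ at every fixed $V$. A single optimized exponential moment controls only the rate $V^2/2$, not the pre-exponential constant; getting the latter is exactly what forces the method of moments. Your framing that this is ``in complete parallel with the passage from Theorem \ref{thm1} to Theorem \ref{thm2}'' misreads the paper: Theorem \ref{thm2} is not deduced from Theorem \ref{thm1} by Markov. It is proved in Section 4 by computing \emph{all integer moments} of the short Dirichlet polynomial ${\mathcal P}(d)$ (Proposition \ref{Prop7}), invoking the method of moments for Gaussian convergence, and then showing by a case analysis (using the first moment weighted by the truncated exponential) that the discrepancy between $\log L(\tfrac 12,E_d)+\tfrac 12\log\log X$ and ${\mathcal P}(d)$ is negligible off a set of density $o(1)$.

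The paper's proof of Theorem \ref{thm3} (Section 5) follows the same template rather than yours: it centers and truncates the Tamagawa contribution to form ${\mathcal C}(d)=\sum_{\log X\le p\le z}C_p(d)$, computes all integer moments of ${\mathcal P}(d)-{\mathcal C}(d)$ (Proposition \ref{Prop8}) so that the method of moments yields the exact variance $\sigma(E)^2$, and disposes of the approximation errors — between $\log L(\tfrac 12,E_d)$ and ${\mathcal P}(d)$, between $\log\text{Tam}(E_d)$ and ${\mathcal C}(d)+(\mu(E)+\tfrac 12)\log\log X$, and the rare events where $|{\mathcal P}(d)|$ is large — by a four-case analysis in which each bad case has frequency $o(X)$. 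The Chebotarev computation you sketch is exactly Lemma \ref{Tam} and it enters Proposition \ref{Prop8} essentially as you predict, so your heuristic for where $\mu(E)$ and $\sigma(E)^2$ come from is right; what fails is the overall reduction. And even setting aside the constant, you never actually establish the weighted moment bound: you explicitly flag ``the key technical challenge'' of inserting the Tamagawa weight into the fractional-$k$ moment machinery without a $(\log X)^\varepsilon$ loss and leave it open, so the argument would be incomplete on that count as well.
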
 

A lot of progress has been made on establishing the Birch and Swinnerton-Dyer conjecture 
in the analytic rank zero case, and thus the assumption in the 
final statement of our Theorem above seems plausibly within the reach of current technology (see \cite{Coates} for results in a 
particular family of quadratic twists, and \cite{Miller} for recent numerical verifications).

Our method is flexible enough to allow the introduction of a sieve over the fundamental discriminants $d$; thus we are 
able to obtain sharp upper bounds for moments of $L(\tfrac 12, E_d)$ where the discriminants are restricted to prime 
values of $|d|$.  Below, define 
$$ 
{\mathcal E}^{\prime} = \{ d \in {\mathcal E}: |d| \text{ is prime} \}. 
$$ 
%
%
For the twists by these ``prime" discriminants, the effect of the Tamagawa numbers in the Birch and Swinnerton-Dyer conjectures 
is negligible, and we have the following analog of the Keating-Snaith conjecture and Conjecture 1 above.   

\begin{conjecture} \label{conj2}  As $d$ ranges over ${\mathcal E}^{\prime}$, the quantities $\log (|\Sh(E_d)|/\sqrt{|d|})$ and $\log L(\frac 12,E_d)$  are distributed like a Gaussian random variable 
with mean $-\frac 12 \log \log |d|$ and variance $\log \log |d|$.  More precisely, for any 
fixed $V\in {\Bbb R}$ and as $X\to \infty$, 
$$ 
\Big| \Big \{ d\in {\mathcal E}^{\prime}: 20 < |d| \le X: \  \ \frac{\log (|\Sh(E_d)|/\sqrt{|d|}) + \frac 12 \log \log |d|}{\sqrt{\log \log |d|}} 
\ge V \Big\} \Big| 
$$ 
and 
$$ 
\Big| \Big\{ d \in {\mathcal E}^{\prime}: 20 < |d| \le X: \ \ \frac{ \log L(\frac 12,E_d) + \frac 12\log \log |d|}{\sqrt{\log \log |d|} } 
\ge V \Big \} \Big| 
$$ 
are both  
$$ 
\sim | \{ d\in {\mathcal E}^{\prime}, |d| \le X \} \Big( \frac{1}{\sqrt{2\pi} } \int_{V}^{\infty} e^{-\frac{x^2}{2} } dx\Big). 
$$ 
\end{conjecture}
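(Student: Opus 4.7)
The plan has three stages: reduce the Tate--Shafarevich statement to one about $L$-values, import the upper half of the Gaussian asymptotic from the argument behind Theorem \ref{thm2}, and produce the matching lower half via a moment-method central limit theorem.

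For the reduction, when $|d|$ is prime the torsion order $|E_d({\Bbb Q})_{\text{tors}}|$ is absolutely bounded (by Mazur), $\Omega(E_d) = c_E |d|^{-1/2}(1 + o(1))$ with $c_E$ depending only on $E$, and $\text{Tam}(E_d) = \text{Tam}(E)\cdot T_{|d|}(d)$ in which the $d$-dependent factor $T_{|d|}(d)$ takes only finitely many bounded values. Hence $\log(|S(E_d)|/\sqrt{|d|}) = \log L(\tfrac12, E_d) + O(1)$, and under BSD the same holds for $|\Sh(E_d)|$. Since $O(1) = o(\sqrt{\log\log|d|})$, both counting functions in the conjecture coincide, up to an $o(1)$ shift in $V$, with the corresponding counts for $\log L(\tfrac12, E_d)$ over $\mathcal{E}'$, so the entire problem reduces to proving a two-sided central limit theorem for $\log L(\tfrac12, E_d)$ with $d$ ranging over $\mathcal{E}'$.

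For the upper half, adapt Theorem \ref{thm2} by restricting to prime $|d|$. The proof of Theorem \ref{thm2} bounds $\log L(\tfrac12, E_d)$ from above by a short Dirichlet polynomial in $a(p)\chi_d(p)/\sqrt{p}$ whose high moments over $\mathcal{E}$ are controlled by the first moment of $L(\tfrac12, E_d)$ twisted by sums of quadratic characters. The sieve requirement ``$|d|$ prime'' is essentially orthogonal to the arithmetic of these Dirichlet polynomials, and can be imposed via a Selberg or Rosser--Iwaniec sieve on the fundamental discriminants; with a Siegel--Walfisz input to handle the resulting prime-counting estimates inside the twisted first moment, one obtains the upper bound $\tfrac{1}{\sqrt{2\pi}}\int_V^\infty e^{-x^2/2}\,dx + o(1)$ against $|\mathcal{E}' \cap [-X,X]|$ uniformly in any fixed $V$.

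For the lower half, which is the main obstacle, the natural strategy is to approximate $\log L(\tfrac12, E_d)$ by the Dirichlet polynomial
\[
\mathcal{P}_Y(d) = \sum_{p \le Y} \frac{a(p)\chi_d(p)}{\sqrt{p}} - \tfrac12 \sum_{p \le Y} \frac{a(p)^2 \chi_d(p)^2}{p},
\]
with $Y$ a small power of $\log X$, compute all integer moments of $\mathcal{P}_Y(d)$ over $d \in \mathcal{E}'$ using orthogonality of quadratic characters together with Rankin--Selberg asymptotics for symmetric powers of $E$ (so that on average $a(p)^2/p$ contributes $1/p$, producing mean $-\tfrac12\log\log|d|$ and variance $\log\log|d|$), and deduce via the method of moments that $\mathcal{P}_Y(d)$ satisfies a Gaussian central limit theorem with the predicted parameters. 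The critical step is then showing that $|\log L(\tfrac12, E_d) - \mathcal{P}_Y(d)| = o(\sqrt{\log\log|d|})$ for all but $o(|\mathcal{E}'\cap[-X,X]|)$ of the $d$'s. On GRH this follows from the explicit formula and Selberg's method; in general it would follow from a zero-density estimate for $L(s, E_d)$ in a shrinking neighborhood of $s = \tfrac12$, uniform in $d$, but no such estimate is presently known for the family of quadratic twists of a fixed elliptic curve. Any unconditional proof of the conjecture therefore appears to require either such a zero-density theorem or a new mechanism that bypasses the low-lying zeros of $L(s, E_d)$ entirely; until then, the proposal above yields only the upper half recorded in Theorem \ref{thm3} restricted to $\mathcal{E}'$.
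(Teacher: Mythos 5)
The statement you were asked about is a \emph{conjecture} in the paper (Conjecture \ref{conj2}); the authors do not prove it, and they do not claim to. Its justification in the paper is heuristic: the Keating--Snaith conjecture for $\log L(\tfrac12,E_d)$ combined with the observation that for prime $|d|$ the period is $\asymp |d|^{-1/2}$, the torsion is bounded, and the Tamagawa factors are negligible, so that $\log(|\Sh(E_d)|/\sqrt{|d|})$ and $\log L(\tfrac12,E_d)$ differ by $O(1)=o(\sqrt{\log\log|d|})$ under BSD. The only part the paper asserts is accessible is the \emph{upper-bound} half, via the same mechanism as Theorems \ref{thm2} and \ref{thm3} with a sieve over prime discriminants (this is exactly why Propositions \ref{PropDirpoly} and \ref{Prop1} are stated with the extra divisibility parameter $v$), and even that is deferred to a sequel paper.

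Your proposal is therefore correct in the only sense available: you do not claim a proof, and your assessment of what can and cannot be done matches the paper's. Your reduction from $\Sh$ to $L$-values for prime $|d|$ is the same as the paper's heuristic (one small inaccuracy: the Tamagawa numbers at primes dividing $N$ can change under twisting, so the clean factorization $\mathrm{Tam}(E_d)=\mathrm{Tam}(E)\cdot T_{|d|}(d)$ is not literally right, but since those local factors are bounded the $O(1)$ conclusion stands). Your upper-half argument --- restricting the Theorem \ref{thm2} machinery to prime discriminants by a sieve fed through the twisted first moment --- is precisely the route the authors indicate, though the paper's mechanism does not need a two-sided CLT for a long polynomial approximating $\log L$: it only needs Gaussian moments of the short polynomial ${\mathcal P}(d)$ (Proposition \ref{Prop7}) plus the first-moment bound \eqref{LPbound} to control the exceptional set, which is weaker input than what your sketch of the upper half suggests. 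Finally, your diagnosis of the lower half is exactly the paper's: unconditionally one would need zero-density information for this family (which, as the paper notes in discussing Hough's work, is not available), and even on GRH one needs the Katz--Sarnak one-level density input to get the full two-sided Keating--Snaith statement; without these, only the one-sided bound of Theorem \ref{thm2}/\ref{thm3} (and its prime-discriminant analogue) is within reach. So there is no gap to repair in your reasoning --- but be explicit that what you have outlined is support for, not a proof of, the conjecture.
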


Analogously to Theorems \ref{thm1}, \ref{thm2} and \ref{thm3}, our methods would enable us to obtain sharp upper bounds 
for the $k$-th moment (with $0\le k\le 1$) in this family, and also the upper bound part of Conjecture 2 (unconditionally for the distribution of $\log L(\frac 12, E_d)$, and restricted to twists with analytic rank zero and conditional on Birch and Swinnerton-Dyer for $\log (|\Sh(E_d)|/\sqrt{|d|})$).   We shall address these problems in a sequel paper.  

With minor modifications, our work applies to the family of quadratic twists of any modular form.  By Waldspurger's theorem, thus 
we may obtain an understanding of the Fourier coefficients of half-integer weight modular forms.  Further, we could also consider the family of quadratic twists where the root number is $-1$, and study here the moments of the derivative $L^{\prime}(\tfrac 12, E_d)$.   As mentioned earlier, the method developed here is general and whenever some moment (plus epsilon) is known 
in a family, our method produces sharp upper bounds for all smaller moments.  For the Riemann zeta-function, where 
the fourth moment (plus epsilon) is known (see \cite{HY}), we are thus able to establish sharp upper bounds for all 
moments below the fourth;  previously such bounds were established by Heath-Brown \cite{HB3} conditional on the Riemann Hypothesis.   Another application of this circle of ideas is to the problem of the fluctuations of a quantum observable for 
the modular surface.   More precisely, let $\psi$ denote a fixed even Hecke-Maass form for $X=PSL_2({\Bbb Z})\backslash {\Bbb H}$, and let $\phi_j$ denote an even Hecke-Maass form with eigenvalue $\lambda_j=\tfrac 14 +t_j^2$.  The problem is to understand the behavior of $\mu_{j}(\psi)=\int_{X} \psi(z) |\phi_j(z)|^2 \frac{dx \ dy}{y^2}$ as $\lambda_j$ gets large.  The mean of this quantity is approximately zero, and its variance is calculated in Zhao \cite{Zhao} (see also \cite{LS} for a holomorphic analog).   It has been suggested in the physics literature that $\mu_j(\psi)$ has Gaussian fluctuations (see \cite{EFKAMM}).  However, by Watson's formula, $|\mu_j(\psi)|^2$ is related to the central value $L(\tfrac 12,\psi\times \phi_j \times \phi_j)$ 
and the Keating-Snaith conjectures strongly suggest that $\mu_j(\psi)$ is not Gaussian (but instead $\log |\mu_j(\psi)|$ is).  This is another instance where only the first moment (plus epsilon) can be calculated, and our work would give sharp upper bounds for all moments up to the first, and establish a one sided central limit theorem for $\log |\mu_j(\psi)|$.  In particular, it would follow that $\lambda_j^{\frac 14}|\mu_j(\psi)|= o(1)$ for almost all eigenfunctions with $\lambda_j \le \lambda$.  

It would be interesting to obtain lower bounds towards the Keating-Snaith conjectures, complementing the upper bounds established here.  In work in progress, we have extended the ideas developed here to obtain a partial result in that direction provided one can control two moments in the family under consideration.  Unfortunately this does not apply to the family of quadratic twists of an elliptic curve, but would apply for example to the family of quadratic Dirichlet $L$-functions, or to the family of newforms of weight $2$ and large level $N$.  Finally, we comment that the method developed here is related to the iterative method of Harper \cite{Har} (discovered independently) which yields sharp conditional estimates for moments.   



\section{Two technical propositions}

\noindent We begin by introducing some notation that will be in place throughout the paper.  
Let $N_0$ denote the lcm of $8$ and $N$.  Let $\kappa = \pm 1 $, and let $a \pmod {N_0}$ denote a 
residue class with $a\equiv 1$ or $5\pmod{8}$.  We assume that $\kappa$ and $a$ are such that 
for any fundamental discriminant $d$ of sign $\kappa$ with $d\equiv a\pmod{N_0}$, the root number $\epsilon_E(d)=\epsilon_E \chi_{d}(-N)$ equals $1$.  
Put 
$$ 
{\mathcal E}(\kappa, a) = \{ d \in {\mathcal E}: \kappa d >0, \ \ d\equiv a \pmod{N_0}\}, 
$$ 
so that ${\mathcal E}$ is the union of all such sets ${\mathcal E}(\kappa, a)$.  Note that if $d\equiv a\pmod{N_0}$ then $d$ is automatically $1\pmod 4$ 
so that the condition of being a fundamental discriminant is simply that $d$ is squarefree.  Further, note that for $d\in {\mathcal E}(\kappa, a)$ the 
values $\chi_d(-1)$, $\chi_d(2)$, and $\chi_d(p)$ for all $p|N$ are fixed.  Therefore, it is well defined (and convenient) to set, 
for Re$(s)>0$, 
\begin{equation}\label{L_a} 
L_a(s) =  \sum_{\substack{{n=1} \\ {p|n \implies p|N_0} } }^{\infty} \frac{a(n)}{n^s} \chi_d(n).
\end{equation} 
Lastly, let $\Phi$ denote a smooth, non-negative function compactly supported on $[1/2,5/2]$ with $\Phi(x) =1$ 
for $x\in [1,2]$, and define, for any complex number $s$, 
\begin{equation} \label{Phicheck}
{\check \Phi}(s) = \int_{0}^{\infty} \Phi(x)x^{s}dx.
\end{equation}  
Throughout the paper, implied constants may depend upon $E$ (and thus $N_0$) and $\Phi$.

Our theorems rely upon two technical propositions which allow us to compute averages of 
short Dirichlet polynomials, as well as averages of $L(\tfrac 12,E_d)$ multiplied by short Dirichlet polynomials. 

\begin{proposition} \label{PropDirpoly}   Let $n$ and $v$ be positive integers both coprime to $N_0$, 
with $v$ square-free, and $(n,v)=1$.    Suppose that $v\sqrt{n} \le X^{\frac 12-\epsilon}$.  If $n$ is a square then 
$$ 
\sum_{\substack{{d \in {\mathcal E}(\kappa,a)} \\ {v|d}}} \chi_d(n) \Phi\Big(\frac{\kappa d}{X}\Big) 
= {\check \Phi}(0) \frac{X}{vN_0} \prod_{p|nv} \Big(1+\frac 1p\Big)^{-1} \prod_{p\nmid N_0} \Big( 1-\frac 1{p^2}\Big) + O(X^{\frac 12+\epsilon} \sqrt{n} ). 
$$ 
If $n$ is not a perfect square, then 
$$ 
 \sum_{\substack{{d \in {\mathcal E}(\kappa,a)} \\ {v|d}}} \chi_d(n) \Phi\Big(\frac{\kappa d}{X}\Big) = O( X^{\frac 12+\epsilon}\sqrt{n}). 
 $$ 
\end{proposition}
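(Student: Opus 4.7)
The plan is to parametrize $d = \kappa v \ell$, detect the squarefree condition on $\ell$ via Möbius, and reduce to a smooth sum of Kronecker symbols over an arithmetic progression, to which a Poisson-type formula applies. Since $v$ is squarefree and coprime to $N_0$, and each $d \in {\mathcal E}(\kappa,a)$ is squarefree with $(d,N_0) = 1$, writing $d = \kappa v \ell$ forces $\ell \ge 1$ to be squarefree, coprime to $vN_0$, and to lie in a fixed invertible residue class $r \pmod{N_0}$. Using $\sum_{\alpha^2 | \ell}\mu(\alpha)$ to detect squarefreeness and setting $\ell = \alpha^2 k$ (necessarily with $(\alpha, vnN_0)=1$) reduces the target sum to
$$\sum_{(\alpha,\, vnN_0) = 1} \mu(\alpha) \sum_{\substack{k \equiv r_\alpha \pmod{N_0} \\ (k,\, vn) = 1}} \chi_{\kappa v \alpha^2 k}(n) \, \Phi\!\left(\frac{v \alpha^2 k}{X}\right),$$
where only $\alpha \ll \sqrt{X/v}$ contributes, by the support of $\Phi$.

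When $n$ is a perfect square, $\chi_{\kappa v \alpha^2 k}(n)$ reduces to the indicator of $(k, n) = 1$. Detecting that coprimality by a further Möbius inversion makes the inner sum a smooth sum of $1$ over an arithmetic progression mod $N_0$, which by Poisson summation equals $\check\Phi(0) X/(v\alpha^2 \beta N_0)$ plus a rapidly decaying error. Summing over $\alpha$ and over divisors $\beta | vn$, and then applying the Euler product identity
$$\prod_{p \nmid vnN_0}\!\left(1 - \tfrac{1}{p^2}\right) \prod_{p | vn}\!\left(1 - \tfrac{1}{p}\right) \;=\; \prod_{p \nmid N_0}\!\left(1 - \tfrac{1}{p^2}\right) \prod_{p | vn}\!\left(1 + \tfrac{1}{p}\right)^{-1},$$
produces exactly the stated main term. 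The residual error, coming from truncating the $\alpha$-sum at $\sqrt{X/v}$ and from the $\tau(vn)$ losses in the Poisson step, is $O(X^{1/2+\epsilon})$, which is subsumed by the claimed bound $O(X^{1/2+\epsilon}\sqrt{n})$.

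When $n$ is not a perfect square, the idea is to invoke the standard Poisson summation formula for the Jacobi symbol in an arithmetic progression (as used in \cite{So} and earlier work), which expresses the inner $k$-sum as a weighted sum of Gauss sums $G_h(n)$ against Fourier transforms of $\Phi$ at scale $\sim X/(v \alpha^2 n)$. The crucial cancellation is that the zero-frequency Gauss sum $G_0(n)$ vanishes precisely because $n$ is not a square, so no main term survives; the remaining frequencies satisfy $|G_h(n)| \ll \sqrt{n}$, yielding an inner-sum bound $\ll X^{1/2+\epsilon}\sqrt{n}/(\alpha \sqrt{v})$. Summing over $\alpha \ll \sqrt{X/v}$ gives the total error $O(X^{1/2+\epsilon}\sqrt{n})$, and the hypothesis $v\sqrt n \le X^{1/2-\epsilon}$ guarantees this is smaller than $X$, i.e., a genuine saving over the trivial bound.

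The main obstacle will be the non-square case: correctly tracking how the fixed factor $\chi_{\kappa v \alpha^2}(n)$ transforms under quadratic reciprocity, how the progression $r_\alpha \pmod{N_0}$ combines with the induced character modulo $8n$, and confirming that the zero-frequency Gauss sum genuinely vanishes uniformly in $\alpha$. These are classical bookkeeping steps common to all moment computations in quadratic-twist families, but they must be executed with explicit dependence on $v$ and $\alpha$ in order to produce the error term of the exact shape $O(X^{1/2+\epsilon}\sqrt{n})$ claimed above.
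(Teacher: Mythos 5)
Your proposal is correct and follows essentially the same route as the paper's proof: M\"obius detection of the squarefree multiples of $v$, Poisson summation (Lemma~\ref{Poisson}) applied to the resulting smoothed $k$-sum over an arithmetic progression, with the zero frequency $\tau_0(n)=\phi(n)$ producing the main term when $n$ is a square and vanishing when $n$ is not. The only organizational difference is that the paper truncates the $\alpha$-sum at $A = X^{\frac12-\epsilon}/(v\sqrt{n})$ and bounds the tail $\alpha > A$ trivially, whereas you apply Poisson uniformly in $\alpha$ and appeal to the Gauss-sum bound for large $\alpha$; both yield the stated error $O(X^{\frac12+\epsilon}\sqrt{n})$.
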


\begin{proposition}  \label{Prop1} Let $u$ and $v$ be positive integers with $(u,v)=1$, $(uv,N_0)=1$ and $v$ square-free.  
Define 
 \begin{equation} 
\label{S} 
{\mathcal S}(X;u,v) = \sum_{\substack{{d \in {\mathcal E}(\kappa,a)} \\ {v|d}}}  L(\tfrac 12, E_d) \chi_d(u) \Phi\Big(\frac{\kappa d}{X}\Big). 
\end{equation} 
Write $u=u_1u_2^2$ with $u_1$ square free.  Then  
$$ 
{\mathcal S}(X;u,v) =  \frac{2Xa(u_1)}{vu_1^{\frac 12} N_0} {\check \Phi}(0) L_a(\tfrac 12) L(1,\text{sym}^2 E) 
{\mathcal G}(1;u,v) + O(X^{\frac 78+\epsilon}u^{\frac 38} v^{\frac 14}). 
$$ 
Here ${\mathcal G}(1;u,v)$ may be expressed as $Cg(u)h(v)$ where $C=C(E)$ is a non-zero 
constant, and $g$ and $h$ are multiplicative functions with $g(p^k) =1+O(1/p)$, and $h(p)=1+O(1/p)$.  
\end{proposition}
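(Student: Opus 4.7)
The natural approach is to open $L(\tfrac12,E_d)$ via the approximate functional equation. Since root numbers are $+1$ on $\mathcal{E}(\kappa,a)$, we have
$$L(\tfrac12,E_d) = 2\sum_{n=1}^{\infty} \frac{a(n)\chi_d(n)}{\sqrt{n}}\, V\!\left(\frac{2\pi n}{\sqrt{N}|d|}\right),$$
for a standard smooth cutoff $V$ of rapid decay. Substituting into $\mathcal{S}(X;u,v)$ and interchanging summations yields
$$\mathcal{S}(X;u,v) \;=\; 2\sum_n \frac{a(n)}{\sqrt{n}}\sum_{\substack{d\in\mathcal{E}(\kappa,a)\\ v|d}} \chi_d(nu)\, V\!\left(\tfrac{2\pi n}{\sqrt{N}|d|}\right)\Phi\!\left(\tfrac{\kappa d}{X}\right).$$
A Mellin inversion $V(x) = \tfrac{1}{2\pi i}\int_{(c)}\widetilde V(s)x^{-s}\,ds$ separates $n$ from $d$ and replaces $\Phi$ by a modified test function $\Phi_s(x)=x^s\Phi(x)$, so the inner $d$-sum is of exactly the form covered by Proposition \ref{PropDirpoly}. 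I then split the $n$-sum at a parameter $Y$ to be chosen.

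\textbf{Small $n$: main term.} For $n\le Y$ (with $Y\le X^{1-\epsilon}/(uv^2)$ so that the hypothesis $v\sqrt{nu}\le X^{1/2-\epsilon}$ is valid), I apply Proposition \ref{PropDirpoly} directly to the sum of $\chi_d(nu)\Phi_s(\kappa d/X)$. The main term is produced exactly when $nu$ is a perfect square; writing $u=u_1u_2^2$, this forces $n=u_1m^2$ for some $m$ coprime to $vN_0$. Inserting this and using Hecke multiplicativity, the resulting $m$-sum is a Dirichlet series whose analytic structure is controlled by $L(s,\mathrm{sym}^2 E)$; the local factors at primes dividing $N_0$ (where $\chi_d$ is constant on $\mathcal{E}(\kappa,a)$) assemble into $L_a(\tfrac12)$. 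Shifting the Mellin contour past the pole at $s=0$ yields the claimed main term, and a prime-by-prime inspection of the Euler product shows that the arithmetic factor $\mathcal{G}(1;u,v)$ takes the multiplicative form $Cg(u)h(v)$ with $g(p^k)=1+O(1/p)$ and $h(p)=1+O(1/p)$.

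\textbf{Large $n$: error.} For $n>Y$, Proposition \ref{PropDirpoly} is too weak (its error grows like $\sqrt{nu}$), so instead I apply Poisson summation to the sum over $d$ after first removing the squarefree condition by M\"obius inversion and writing $d=v m$ with congruence conditions modulo $N_0$. The $\chi_d(nu)$ factor produces a quadratic Gauss sum in the dual variable $k$, and the Mellin weight coming from $V$ effectively truncates the dual sum at $|k|\ll X^{1+\epsilon}/n$ (times powers of $u,v$). Standard evaluation of quadratic Gauss sums, together with the square-root cancellation that occurs because $nu$ is generically not a square, yields a bound for the tail $n>Y$ of the expected shape.

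\textbf{Balancing and main obstacle.} Equating the small-$n$ error (which is $X^{1/2+\epsilon}\sqrt{u}\,Y$ after summation against $a(n)/\sqrt{n}$) against the Poisson bound for the tail and optimizing $Y$ produces the error term $O(X^{7/8+\epsilon}u^{3/8}v^{1/4})$. The principal obstacle is executing the Poisson step cleanly: one must track the squarefree sieve, the congruence modulus $N_0$, the factor $v$, and the common factor $(n,d)$, which together produce several families of Gauss sums whose evaluation depends on quadratic reciprocity data and on whether $nu$ is a square modulo small primes. Uniformity in $u$ and $v$ is essential so that these error contributions assemble into the sharp bound $X^{7/8+\epsilon}u^{3/8}v^{1/4}$; once this is done, matching the main term with $L_a(\tfrac12)L(1,\mathrm{sym}^2 E)\,Cg(u)h(v)$ is a routine local calculation.
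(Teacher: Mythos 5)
Your proposed decomposition — splitting the approximate functional equation sum at $n \le Y$ versus $n > Y$ — is genuinely different from the paper's, and as stated it has a gap. The paper never splits on $n$; instead, after the M\"obius sieve that removes the squarefree condition (writing $d = kv\beta\alpha^2$ with $\beta\mid v$ and $\alpha$ free), it splits on the size of $\alpha^2\beta$, applying Poisson to the $k$-sum only when $\alpha^2\beta\le Y$, and bounding the complementary range $\alpha^2\beta > Y$ by expressing the inner $n$-sum as a twisted $L$-value $L(\tfrac12+s,E_{k_1})$ and invoking \emph{Heath-Brown's quadratic large sieve}. That large-sieve step is not optional: a trivial or convexity bound on those terms loses a power of $X$. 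Your proposal never mentions the large sieve, and it does not explain how to control the long tail of the M\"obius variable $\alpha$ (which ranges up to roughly $\sqrt{X}$): for $\alpha$ large, the $k$-sum is very short, Poisson dualizes into a very long $\ell$-sum, and the resulting contribution does not decay. Some device — either the large sieve or a separate split on $\alpha^2\beta$ with a different bound — is required, and you supply neither.

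There is a second, related gap. For $n > Y$ you appeal to ``square-root cancellation in Gauss sums,'' but the paper's actual mechanism is more structured: after Poisson with $\ell\ne 0$, the $n$-sum is packaged via Mellin transform into $L(1+s,\,E\times\psi\chi_{\pm v\beta\ell})$ and then estimated by the convexity bound for these $L$-functions, yielding $\ll u^{3/2}vY^{3/2}X^{1/2+\epsilon}$. Without this (or an equivalent analytic input), ``square-root cancellation in Gauss sums'' alone does not produce a bound of the right quality, and your claim that balancing gives $O(X^{7/8+\epsilon}u^{3/8}v^{1/4})$ is unverified. There is also a smaller structural awkwardness: by extracting the $\ell=0$ main term only over $n\le Y$, you must separately add back the (convergent) tail $n>Y$ of the $nu=\square$ contribution and show it is absorbed by the error; the paper sidesteps this entirely by keeping the full $n$-sum inside the Mellin integral, so the main term emerges from the residue at $s=0$ with the correct Euler product $\mathcal{G}(1;u,v)$ in one stroke.
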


The constant $C$ and the functions $g$ and $h$ are described explicitly in the proof given in section 10.   
For our work here, we only need Proposition \ref{Prop1} in the case $v=1$, but the 
general version above gives us the flexibility to introduce a sieve for the values of $d$, and thus 
enables us to obtain results over `prime' discriminants; we will discuss this problem elsewhere.   
We postpone the proofs of these propositions to sections 7 and 10, and proceed now to outline the 
proofs of the main theorems.

\section{Proof of Theorem \ref{thm1}} 

 \noindent To prove Theorem \ref{thm1} we first obtain good bounds for $L(\tfrac 12, E_d)^k$ in terms of 
 a suitable short Dirichlet polynomial and $L(\frac 12,E_d)$ times another short Dirichlet polynomial.  
 We next formulate a general such inequality. 
    
\subsection{The key inequality}  

Let $\ell$ be a non-negative integer, and $x$ a real number.  Define  
\begin{equation} 
\label{E_ell} 
E_{\ell}(x) = \sum_{j=0}^{\ell} \frac{x^{j}}{j!}. 
\end{equation} 

\begin{lemma} \label{E1}  Let $\ell$ be a non-negative even integer.  The function $E_{\ell}(x)$ is positive valued  
and convex.   Further, for any $x\le 0$ we have $E_{\ell}(x) \ge e^{x}$.  Finally, if $\ell$ is a positive even integer and 
$x\le \ell/e^2$, we have 
$$ 
e^{x} \le \Big(1 + \frac{e^{-\ell}}{16}\Big) E_{\ell}(x). 
$$
\end{lemma}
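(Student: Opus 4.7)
The plan is to handle the three claims in order. For \textbf{positivity and convexity}, I would induct on even $\ell \geq 0$, using the identity $E_\ell''(x) = E_{\ell-2}(x)$. The base case $\ell = 0$ is trivial since $E_0 \equiv 1$. In the inductive step, $E_{\ell-2} > 0$ gives convexity of $E_\ell$ immediately via $E_\ell'' = E_{\ell-2}$. Positivity then follows because, since $\ell$ is even and the leading coefficient is positive, $E_\ell(x) \to +\infty$ as $|x| \to \infty$, so $E_\ell$ attains a global minimum at some $x_0$. There $E_\ell'(x_0) = E_{\ell-1}(x_0) = 0$, so $E_\ell(x_0) = x_0^\ell/\ell! \geq 0$; equality would require $x_0 = 0$, contradicting $E_{\ell-1}(0) = 1$.

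The bound $E_\ell(x) \geq e^x$ for $x \leq 0$ is immediate from Taylor's remainder theorem: $e^x - E_\ell(x) = x^{\ell+1} e^\xi/(\ell+1)!$ for some $\xi$ between $0$ and $x$; for $x \leq 0$ and $\ell+1$ odd, the right-hand side is non-positive.

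The main content lies in the \textbf{quantitative bound}. Write $R_\ell(x) = e^x - E_\ell(x) = \sum_{j \geq \ell+1} x^j/j!$; the goal is $R_\ell(x) \leq (e^{-\ell}/16)\, E_\ell(x)$. The case $x \leq 0$ follows from the previous step. For $0 \leq x \leq \ell/e^2$, the ratio of consecutive tail terms is $x/(j+1) \leq (\ell/e^2)/(\ell+2) \leq e^{-2}$ for $j \geq \ell+1$, so comparison with a geometric series yields $R_\ell(x) \leq (x^{\ell+1}/(\ell+1)!) \cdot e^2/(e^2-1)$. Applying Stirling's lower bound $(\ell+1)! \geq \sqrt{2\pi(\ell+1)}\,((\ell+1)/e)^{\ell+1}$ together with $(\ell/(\ell+1))^{\ell+1} \leq 1/e$ then gives the absolute estimate $R_\ell(x) \leq e^{-\ell}/(\sqrt{2\pi(\ell+1)}(e^2-1))$. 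Since $E_\ell(x) \geq 1$ for $x \geq 0$ (all terms of $E_\ell$ are nonnegative there), it suffices to check $\sqrt{2\pi(\ell+1)}(e^2-1) \geq 16$, which for $\ell \geq 2$ follows from $\sqrt{6\pi}(e^2-1) \approx 27.7$.

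The main obstacle is finding the right explicit constants: Stirling combined with the narrow cutoff $x \leq \ell/e^2$ leaves enough slack that $1/16$ works uniformly in $\ell \geq 2$ (the smallest even $\ell$ being the worst case), but one must be careful not to squander this slack. In particular, the naive bound $E_\ell(x) \geq x^\ell/\ell!$ would be insufficient for $x$ near $\ell/e^2$; the cleaner path is to bound $R_\ell(x)$ in absolute terms and use the cruder $E_\ell(x) \geq 1$.
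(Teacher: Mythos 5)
Your proof is correct and follows essentially the same route as the paper: positivity and convexity by induction on even $\ell$ using $E_\ell''=E_{\ell-2}$ and the local-minimum evaluation $E_\ell(x_0)=x_0^\ell/\ell!$, then for the quantitative bound an absolute estimate $e^x-E_\ell(x)\le e^{-\ell}/16$ on $[0,\ell/e^2]$ combined with $E_\ell(x)\ge 1$. The only notable variations are cosmetic: you invoke full Stirling on $(\ell+1)!$ where the paper uses the cruder $\ell!\ge e(\ell/e)^\ell$, and you obtain $E_\ell(x)\ge e^x$ for $x\le 0$ from the Lagrange form of the Taylor remainder, whereas the paper reruns the local-minimum argument on $E_\ell(x)-e^x$; both are equally valid one-line observations.
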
 
\begin{proof}  We prove the first assertion by induction on $\ell$, the case $\ell=0$ being 
clear.  Since $E_{\ell}^{\prime\prime}(x)=E_{\ell-2}(x)$, it suffices to prove that $E_{\ell}(x)$ 
takes on positive values, and convexity follows at once.   Consider a point $x$ where 
$E_{\ell}$ takes a local minimum.   Then $E_{\ell}^{\prime}(x) =E_{\ell-1}(x)=0$, so that 
$E_{\ell}(x) =E_{\ell-1}(x)+\frac{x^{\ell}}{\ell!} = \frac{x^{\ell}}{\ell!} >0$, as desired.   
The second assertion that $E_{\ell}(x) \ge e^{x}$ for $x\le 0$ follows similarly 
by considering a local minimum for $E_{\ell}(x)-e^{x}$ on $(-\infty,0)$.  

Now we prove the final assertion, and we may assume that $0\le x\le \ell/e^2$.   Using $\ell! \ge e(\ell/e)^{\ell}$, we 
see that 
$$
e^{x}-E_{\ell}(x) \le \sum_{j=\ell+1}^{\infty} \frac{x^j}{j!} 
\le \frac{x^{\ell}}{\ell!} \sum_{j=\ell+1}^{\infty} \Big(\frac{x}{\ell}\Big)^{j-\ell} \le 
\frac{1}{6} \frac{x^{\ell}}{\ell!} \le \frac{1}{16} e^{-\ell}, 
$$ 
and since $E_{\ell}(x)\ge 1$ for $x\ge 0$, the lemma follows.
\end{proof} 

\begin{lemma} \label{E2}  Let $y\ge 0$ be a real number.  Suppose that 
$x_1$, $\ldots$, $x_R$ are real numbers, and $\ell_1$, $\ldots$, $\ell_R$ are 
positive even integers.  Then, for any $0\le k\le 1$ we have 
\begin{align*}
y^{k} &\le Cky \prod_{j=1}^{R} E_{\ell_j}((k-1)x_j) + C(1-k) \prod_{j=1}^{R} E_{\ell_j}(kx_j) 
\\
&+ \sum_{r=0}^{R-1} \Big( Cky \prod_{j=1}^{r} E_{\ell_j}((k-1)x_j) + C(1-k) 
\prod_{j=1}^{r} E_{\ell_j}(kx_j) \Big) \Big(\frac{e^2 x_{r+1}}{\ell_{r+1}}\Big)^{\ell_{r+1}}, 
\end{align*} 
where $C= \exp((e^{-\ell_1}+\ldots+e^{-\ell_R})/16)$. 
\end{lemma}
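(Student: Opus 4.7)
The plan is to establish the inequality by a clean case analysis on the magnitudes $|x_j|$ relative to the thresholds $\ell_j/e^2$ provided by Lemma \ref{E1}. For each $r\in\{0,1,\ldots,R\}$, define the event $\mathcal{E}_r$ of tuples $(x_1,\ldots,x_R)$ with $|x_j|\leq \ell_j/e^2$ for every $j\leq r$ and, when $r<R$, additionally $|x_{r+1}|>\ell_{r+1}/e^2$. These $R+1$ events are disjoint and exhaust every configuration. The strategy is to bound $y^k$ separately on each $\mathcal{E}_r$ by a single summand on the right-hand side of the lemma, and then to combine these bounds using non-negativity of all the summands.

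On $\mathcal{E}_R$, I would apply Young's inequality $y^k\leq kye^{(k-1)x}+(1-k)e^{kx}$ with $x=x_1+\cdots+x_R$, split each exponential as a product over $j$, and invoke Lemma \ref{E1}: since $|x_j|\leq \ell_j/e^2$ forces both $(k-1)x_j\leq \ell_j/e^2$ and $kx_j\leq \ell_j/e^2$, the lemma yields $e^{(k-1)x_j}\leq(1+e^{-\ell_j}/16)E_{\ell_j}((k-1)x_j)$ and $e^{kx_j}\leq(1+e^{-\ell_j}/16)E_{\ell_j}(kx_j)$. Multiplying across $j$ produces exactly the main term, with constant $\prod_j(1+e^{-\ell_j}/16)\leq\exp\bigl(\sum_je^{-\ell_j}/16\bigr)=C$. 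On $\mathcal{E}_r$ with $r<R$, the same argument applied only to the first $r$ indices, using Young's with $x=x_1+\cdots+x_r$, gives
$$y^k\leq Cky\prod_{j=1}^r E_{\ell_j}((k-1)x_j)+C(1-k)\prod_{j=1}^r E_{\ell_j}(kx_j).$$
The crucial observation is that the defining condition $|x_{r+1}|>\ell_{r+1}/e^2$, combined with the evenness of $\ell_{r+1}$, forces $(e^2 x_{r+1}/\ell_{r+1})^{\ell_{r+1}}>1$, so multiplying the previous display by this factor bounds $y^k$ by precisely the $r$-th summand of the error sum in the statement.

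Every term on the right-hand side is non-negative---the $E_{\ell_j}$ are positive by Lemma \ref{E1}, and each penalty factor $(e^2 x_{r+1}/\ell_{r+1})^{\ell_{r+1}}$ is non-negative because $\ell_{r+1}$ is even---so in any one of the $R+1$ disjoint cases a single summand on the right-hand side already dominates $y^k$, and hence the full sum does so unconditionally. The main point, if there is one, is spotting the correct decomposition by the smallest ``bad'' index $r+1$: once this case partition is in place, the rest is an application of Young's inequality to the \emph{partial} sum $x_1+\cdots+x_r$ (no bound on later $x_j$ being available), followed by multiplication by the penalty factor, which exceeds $1$ precisely when it is needed.
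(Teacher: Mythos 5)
Your proof is correct and follows essentially the same route as the paper: a case split on the smallest index $r+1$ with $|x_{r+1}|>\ell_{r+1}/e^2$, Young's inequality with exponents $1/k$ and $1/(1-k)$ applied to the partial sum $x_1+\cdots+x_r$, Lemma \ref{E1} to replace each exponential factor by $(1+e^{-\ell_j}/16)E_{\ell_j}(\cdot)$, and the observation that the even-power penalty factor exceeds $1$ in the bad case while all right-hand terms are non-negative. The only cosmetic difference is that you phrase the partition as a family of disjoint events rather than a single "otherwise" branch.
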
 
\begin{proof}  Suppose first that $|x_{j}| \le \ell_j/e^2$ for all $1\le j\le R$.   Recall Young's inequality: 
if $a$ and $b$ are non-negative and $p\ge 1$ with $1/p+1/q=1$ then $ab\le a^p/p + b^q/q$.   Using this with $p=1/k$, $q=1/(1-k)$, $a=y^k \exp(k(k-1)(x_1+\ldots+x_R))$ and $b=\exp(k(1-k)(x_1+\ldots+x_R))$, we obtain 
$$ 
y^k \le k y \exp((k-1)(x_1+\ldots+x_R)) + (1-k) \exp(k (x_1+\ldots+x_R)).
$$
Since $0\le k\le 1$ and $|x_j| \le \ell_j/e^2$, it follows that $e^{(k-1)x_j} \le (1+e^{-\ell_j}/16) E_{\ell_j}((k-1)x_j)$ 
and that $e^{kx_j} \le (1+e^{-\ell_j}/16) E_{\ell_j}(kx_j)$.   Using these inequalities we obtain 
$$ 
y^k \le Cky \prod_{j=1}^{R} E_{\ell_j}((k-1)x_j) + C(1-k) \prod_{j=1}^{R} E_{\ell_j}(kx_j). 
$$ 
This is one of the terms in the right hand side of our claimed inequality, and since all the terms 
are non-negative, the desired estimate follows in this case.  

Now suppose that there exists $0\le r\le R-1$ such that 
$|x_{j}| \le \ell_j/e^2$ for all $j\le r$, but $|x_{r+1}| > \ell_{r+1}/e^2$.  As before, using Young's inequality 
we obtain 
\begin{align*}
y^{k} &\le ky \exp((k-1)(x_1+\ldots+x_r)) + (1-k) \exp(k(x_1+\ldots+x_r)) \\
&\le Cky \prod_{j=1}^{r} E_{\ell_j}((k-1)x_j) + C(1-k) \prod_{j=1}^{r} E_{\ell_j}(kx_j). 
\end{align*}
Since $|x_{r+1}|>\ell_{r+1}/e^2$ by assumption, multiplying the right hand side by 
$(e^2 x_{r+1}/\ell_{r+1})^{\ell_{r+1}}$ only increases that quantity, and so our 
desired inequality follows in this case also. 
\end{proof}

\subsection{Estimating $L(\tfrac 12,E_d)^k$}  

We now specialize Lemma 2 to the situation at hand.  Let $d$ be an element of ${\mathcal E}(\kappa,a)$.  
Let $R$ be a natural number and 
$\ell_1$, $\ldots$, $\ell_R$ be even natural numbers.  Let $P_1$, $\ldots$, $P_R$ be disjoint subsets 
of primes $p$ not dividing $N_0$.  Define 
\begin{equation} 
\label{defP} 
{\mathcal P}_j(d) = \sum_{p\in P_j} \frac{a(p)}{\sqrt{p}} \chi_d(p). 
\end{equation} 
Given a real number $0\le k \le 1$, put 
\begin{equation} 
\label{defA} 
{\mathcal A}_j(d) = E_{\ell_j} ((k-1) {\mathcal P}_j(d)), 
\end{equation} 
and 
\begin{equation} 
\label{defB} 
{\mathcal B}_j(d) = E_{\ell_j}(k {\mathcal P}_j(d)). 
\end{equation}  

\begin{proposition} \label{Prop2}  With notations as above, we have 
\begin{align*} 
\Big(\frac{L(\frac 12,E_d)(\log |d|)^{\frac 12}}{L_a(\frac 12)} \Big)^k &\le  Ck \frac{L(\frac12, E_d)(\log |d|)^{\frac 12}}{L_a(\frac 12)}
\Big( \prod_{j=1}^{R} {\mathcal A}_j(d) + \sum_{r=0}^{R-1} \prod_{j=1}^{r} {\mathcal A}_j(d) \Big( \frac{e^2 {\mathcal P}_{r+1} (d)}{\ell_{r+1}} \Big)^{\ell_{r+1}} \Big) 
\\ 
&+ C(1-k) \Big( \prod_{j=1}^{R} {\mathcal B}_j(d) + \sum_{r=0}^{R-1}\prod_{j=1}^{r} {\mathcal B}_j(d) \Big(\frac{e^2 {\mathcal P}_{r+1}(d)}{\ell_{r+1}}\Big)^{\ell_{r+1}}\Big), 
\end{align*}
where $C= \exp((e^{-\ell_1}+\ldots+e^{-\ell_R})/16)$, as in Lemma \ref{E2}. 
\end{proposition}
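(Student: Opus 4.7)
The plan is that Proposition \ref{Prop2} is a direct specialization of Lemma \ref{E2}. Given $d \in {\mathcal E}(\kappa, a)$, I would set
$$y \assign \frac{L(\tfrac 12,E_d)(\log |d|)^{\frac 12}}{L_a(\tfrac 12)},\qquad x_j \assign {\mathcal P}_j(d) \quad (1 \le j \le R),$$
keeping the even integers $\ell_1, \ldots, \ell_R$ as in the hypothesis. With this choice the quantities $E_{\ell_j}((k-1)x_j)$ and $E_{\ell_j}(kx_j)$ appearing in Lemma \ref{E2} are by definitions \eqref{defA} and \eqref{defB} precisely ${\mathcal A}_j(d)$ and ${\mathcal B}_j(d)$, and the constant $C = \exp((e^{-\ell_1}+\cdots+e^{-\ell_R})/16)$ matches. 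Substitution then yields exactly the claimed inequality, including the correction terms $(e^2 {\mathcal P}_{r+1}(d)/\ell_{r+1})^{\ell_{r+1}}$ indexed by $r = 0, \ldots, R-1$.

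Before invoking Lemma \ref{E2} I need to verify its hypotheses. The $x_j = {\mathcal P}_j(d)$ are manifestly real because $a(p)$ is real and $\chi_d(p) \in \{-1, 0, 1\}$, and the $\ell_j$ are positive even integers by assumption. The remaining condition is that $y \ge 0$. By Waldspurger's theorem, $L(\tfrac 12, E_d) \ge 0$ for $d \in {\mathcal E}$. For the denominator, $L_a(\tfrac 12)$ is a finite Euler product over the primes dividing $N_0$, and the local factors depend only on the frozen quantities $\chi_d(p)$ for $p \mid N_0$, which are constant on ${\mathcal E}(\kappa, a)$. Hence $L_a(\tfrac 12)$ has a fixed, nonzero sign across the family; by our convention we take $L_a(\tfrac 12) > 0$ (absorbing a sign elsewhere if necessary), so $y \ge 0$ as needed.

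The only real step is this sign/nonvanishing check on $L_a(\tfrac 12)$, which is more a bookkeeping nuisance than a genuine obstacle: once settled, the entire proposition is an instance of Lemma \ref{E2} with a transparent dictionary. In particular no analytic input beyond Lemma \ref{E2} itself is required here; the arithmetic content of Theorem \ref{thm1} will enter only when the right-hand side of Proposition \ref{Prop2} is summed over $d \in {\mathcal E}(\kappa, a)$ with the weight $\Phi(\kappa d/X)$ and evaluated via Propositions \ref{PropDirpoly} and \ref{Prop1}.
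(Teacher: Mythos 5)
Your proof matches the paper's: the authors also prove Proposition \ref{Prop2} by applying Lemma \ref{E2} with $y = L(\tfrac 12,E_d)(\log |d|)^{\frac 12}/L_a(\tfrac 12)$ and $x_j = {\mathcal P}_j(d)$, so the dictionary and the invocation are identical. Your additional remark about checking $y \ge 0$ (Waldspurger plus the frozen sign of $L_a(\tfrac 12)$ on ${\mathcal E}(\kappa,a)$) is a reasonable bit of diligence that the paper leaves implicit.
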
  
\begin{proof}  The Proposition follows upon applying Lemma \ref{E2} with $y=L(\frac 12,E_d)(\log |d|)^{\frac 12}/L_a(\frac 12)$, and $x_j ={\mathcal P}_j(d)$.  
\end{proof} 

\subsection{Estimation of terms arising from the key inequality}  

Suppose now that $X$ is large and $X/2 \le |d|\le 5X/2$.  Define a sequence of 
even natural numbers $\ell_j$ by setting $\ell_1= 2\lceil 100 \log \log X\rceil$ and 
for $j\ge 1$ put $\ell_{j+1} = 2 \lceil 100 \log \ell_j \rceil$.   Let $R$ be the largest natural number with $\ell_R >10^4$.  Note that 
the sequence $\ell_j$ is monotone decreasing for $1\le j\le R$, and indeed we have $\ell_{j} > \ell_{j+1}^2$ in this range. 
Now define ${ P}_1$ to be the set of primes below $X^{1/\ell_1^2}$ that do not divide $N_0$.  For $2\le j\le R$ define 
${ P_j}$ to be the primes lying in the interval $(X^{1/\ell_{j-1}^2}, X^{1/\ell_j^2}]$.   Next define ${\mathcal P}_j(d)$, ${\mathcal A}_j(d)$ and 
${\mathcal B}_j(d)$ as in \eqref{defP}, \eqref{defA} and \eqref{defB} above.    We shall invoke Proposition \ref{Prop2} with this choice of parameters, and use Propositions \ref{PropDirpoly} and \ref{Prop1} to estimate the terms that arise.  

\begin{proposition} \label{Prop3} With notations as above, 
$$ 
\sum_{d\in {\mathcal E}(\kappa,a)} \Big( \prod_{j=1}^{R} {\mathcal B}_j(d) + 
\sum_{r=0}^{R-1} \prod_{j=1}^r {\mathcal B}_j(d) \Big(\frac{e^2 {\mathcal P}_{r+1}(d)}{\ell_{r+1}}\Big)^{\ell_{r+1}} \Big) \Phi\Big(\frac{\kappa d}{X}\Big) 
\ll X (\log X)^{\frac{k^2}{2}}. 
$$ 
\end{proposition}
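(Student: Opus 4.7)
The plan is to expand each truncated exponential $\mathcal{B}_j(d)=E_{\ell_j}(k\mathcal{P}_j(d))$ and each ``bad'' factor $(e^2\mathcal{P}_{r+1}(d)/\ell_{r+1})^{\ell_{r+1}}$ as short Dirichlet polynomials in $d$, apply Proposition \ref{PropDirpoly} coefficient-by-coefficient, and then show that the resulting main terms assemble into an Euler product of size $\ll X(\log X)^{k^2/2}$, while both the bad contributions and the arithmetic errors are $o(X(\log X)^{k^2/2})$.

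First I would verify the length constraint. Expanding $E_{\ell_j}(k\mathcal{P}_j(d))$ yields a Dirichlet polynomial supported on integers $n$ whose every prime factor lies in $P_j$ and with $n\le (X^{1/\ell_j^2})^{\ell_j}=X^{1/\ell_j}$. Because the $P_j$ are disjoint and the sequence $\ell_j$ is rapidly decreasing in $j$ (with $\ell_j>\ell_{j+1}^2$ and $\ell_R>10^4$), the product $\prod_{j=1}^R \mathcal{B}_j(d)$ has support length at most $X^{\sum_j 1/\ell_j}\ll X^{2/\ell_R}\le X^{10^{-4}}$, and the same bound holds for each hybrid $\prod_{j\le r}\mathcal{B}_j(d)\cdot(e^2\mathcal{P}_{r+1}(d)/\ell_{r+1})^{\ell_{r+1}}$. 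Proposition \ref{PropDirpoly} therefore applies with $v=1$ to every Dirichlet coefficient; the error it contributes is at worst $X^{1/2+\epsilon+o(1)}$ per coefficient, and since the total number of coefficients is $\prod_j|P_j|^{\ell_j}\ll X^{o(1)}$, the total arithmetic error is $\ll X^{1-\delta}$ for some $\delta>0$.

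For the main term of $\prod_{j=1}^R\mathcal{B}_j$, Proposition \ref{PropDirpoly} picks out square indices $n=m^2$ with $m$ supported on $\bigcup_j P_j$, weighted by $\prod_{p|m}(1+1/p)^{-1}$. By disjointness of the $P_j$ this main term factorises over primes, and the Taylor expansion of $E_{\ell_j}$ reproduces the local factor $\cosh(ka(p)/\sqrt{p})$ up to a tail $|ka(p)/\sqrt p|^{\ell_j+1}/(\ell_j+1)!$ that is negligible since $\ell_j$ is large. Thus
\[ \sum_{d\in\mathcal{E}(\kappa,a)} \prod_{j=1}^R\mathcal{B}_j(d)\,\Phi\Big(\frac{\kappa d}{X}\Big) \ll X\prod_{p\le X^{1/\ell_R^2}}\Big(1+\frac{k^2a(p)^2}{2p}+O(p^{-3/2})\Big)\ll X(\log X)^{k^2/2}, \]
the last step using the Rankin--Selberg estimate $\sum_{p\le y}a(p)^2/p=\log\log y+O(1)$.

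For the bad terms, the key quantitative input is that by construction $\ell_{j+1}=2\lceil 100\log\ell_j\rceil$, so $\sigma_j^2:=\sum_{p\in P_j}a(p)^2/p$ satisfies $\sigma_j^2/\ell_j\le 1/90$ (and likewise for $j=1$, where $\sigma_1^2\approx\log\log X$ while $\ell_1\approx 200\log\log X$). A Gaussian-moment computation for $(\mathcal{P}_{r+1}(d))^{\ell_{r+1}}$, paralleling the Euler-product analysis above, gives an arithmetic main term of order $(\ell_{r+1}\sigma_{r+1}^2/e)^{\ell_{r+1}/2}$, and after multiplication by $(e^2/\ell_{r+1})^{\ell_{r+1}}$ this becomes $\ll(e^3/90)^{\ell_{r+1}/2}$; combined with the at-worst $(\log X)^{k^2/2}$ contribution from $\prod_{j\le r}\mathcal{B}_j(d)$ and summed over the $R\ll\log\log\log X$ choices of $r$, the total bad contribution is $o(X(\log X)^{k^2/2})$. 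The main obstacle will be the Euler-product bookkeeping: one must check carefully that replacing the true exponential by the truncation $E_{\ell_j}$ preserves the local factors, and that the combinatorial main term produced by Proposition \ref{PropDirpoly} (a weighted sum over square indices) really does assemble into the product of local $\cosh$ factors.
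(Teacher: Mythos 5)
Your proof follows the paper's approach essentially step for step: expand the truncated exponentials as short Dirichlet polynomials, apply Proposition \ref{PropDirpoly} with $v=1$, absorb the arithmetic errors, bound the square-index main term by an Euler product of local factors $\sum_t a(p)^{2t}k^{2t}/(p^t(2t)!)$, and show the $(\mathcal{P}_{r+1}/\ell_{r+1})^{\ell_{r+1}}$ terms contribute an exponentially small correction because $\ell_{r+1}$ is roughly $100$ times $\sum_{p\in P_{r+1}} a(p)^2/p$. One point worth being precise about, which the paper handles explicitly: the constraint $\Omega(n_j)\le\ell_j$ built into $\mathcal{B}_j$ couples the primes in $P_j$, so the main term does not literally factor over primes within each block; it is because all the square-index terms are non-negative that one can drop the constraint (and the $(1+1/p)^{-1}$ weights) to obtain the Euler product as an upper bound — your phrase ``reproduces the local factor up to a tail'' should really be replaced by this positivity argument.
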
 

\begin{proposition} \label{Prop4} With notations as above,  
$$ 
\sum_{d\in {\mathcal E}(\kappa,a)} L(\frac 12,E_d) \Big(  \prod_{j=1}^{R} {\mathcal A}_j(d) + 
\sum_{r=0}^{R-1} \prod_{j=1}^r {\mathcal A}_j(d) \Big(\frac{e^2 {\mathcal P}_{r+1}(d)}{\ell_{r+1}}\Big)^{\ell_{r+1}} \Big) \Phi\Big(\frac{\kappa d}{X}\Big) 
\ll X (\log X)^{\frac{k^2-1}{2}}. 
$$ 
\end{proposition}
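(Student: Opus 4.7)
The approach is to expand the Dirichlet polynomials $\prod_j \mathcal{A}_j(d)$ and the tail products as $\sum_n \alpha(n)\chi_d(n)$, and then apply Proposition~\ref{Prop1} termwise. Since the $P_j$ are disjoint and each $\mathcal{A}_j(d) = E_{\ell_j}((k-1)\mathcal{P}_j(d))$ is supported on integers with at most $\ell_j$ primes from $P_j$, the expansion is supported on $n \leq \prod_j (X^{1/\ell_j^2})^{\ell_j} = X^{\sum_j 1/\ell_j} \leq X^{1/\ell_R + o(1)} \leq X^{10^{-3}}$, safely inside the range of applicability of Proposition~\ref{Prop1} with $v=1$. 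The tail products acquire an extra factor of length at most $X^{1/\ell_{r+1}}$ and remain equally short. Summing the errors from Proposition~\ref{Prop1} gives $O(X^{7/8+\epsilon}\sum_n |\alpha(n)| n^{3/8}) = o(X)$ by a routine bound on the coefficients, using $|a(p)|\leq 2$, the factorial denominators, and the shortness of the support.

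The main term factors as an Euler product. The contribution of $\chi_d(n)$ from Proposition~\ref{Prop1} is proportional to $a(n_1)/\sqrt{n_1} \cdot g(n)$ with $n = n_1 n_2^2$ ($n_1$ squarefree) and $g$ multiplicative satisfying $g(p^k) = 1+O(1/p)$. Since $\alpha$, $a(n_1)/\sqrt{n_1}$, and $g$ are multiplicative with independent contributions over the disjoint $P_j$, the main term takes the form $C_1 X \prod_{p \in \bigcup_j P_j}\mathcal{L}_p$ for a positive constant $C_1 = C_1(E,k)$, where at each $p \in P_j$
\[
\mathcal{L}_p = \sum_{m=0}^{\ell_j}\frac{(k-1)^m}{m!}\Big(\frac{a(p)}{\sqrt{p}}\Big)^m\cdot\frac{a(p^{m\bmod 2})}{p^{(m\bmod 2)/2}} g(p^m).
\]
Collecting terms of order $1/p$: the $m = 1$ term yields $(k-1)a(p)^2/p$ and the $m = 2$ term yields $\tfrac{(k-1)^2}{2}\, a(p)^2/p$, so that
\[
\mathcal{L}_p = 1 + \frac{k^2-1}{2}\cdot \frac{a(p)^2}{p} + O(p^{-3/2}),
\]
using $(k-1) + \tfrac{(k-1)^2}{2} = \tfrac{k^2-1}{2}$. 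The Rankin--Selberg estimate $\sum_{p\leq Y} a(p)^2/p = \log\log Y + O(1)$ (from the pole at $s=1$ of $L(s, E \times E) = \zeta(s)L(s,\text{sym}^2 E)$) then yields $\prod_{p \in \bigcup_j P_j}\mathcal{L}_p \ll (\log X)^{(k^2-1)/2}$, hence the desired bound for the $\prod_j \mathcal{A}_j$ contribution.

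The tail terms $\prod_{j\leq r} \mathcal{A}_j \cdot (e^2 \mathcal{P}_{r+1}/\ell_{r+1})^{\ell_{r+1}}$ are handled in the same way: primes in $\bigcup_{j\leq r} P_j$ contribute $\ll (\log X)^{(k^2-1)/2}$, while the additional factor produces a combinatorial sum over $\ell_{r+1}$-tuples of primes in $P_{r+1}$. After Proposition~\ref{Prop1}, the dominant contribution comes from tuples pairing into squares and equals essentially $\ell_{r+1}!/(2^{\ell_{r+1}/2}(\ell_{r+1}/2)!) \cdot (\sum_{p\in P_{r+1}}a(p)^2/p)^{\ell_{r+1}/2}$; together with the prefactor $(e^2/\ell_{r+1})^{\ell_{r+1}}$, Stirling shows this to be exponentially small in $\ell_{r+1}$, so the total tail contribution is negligible after summing over $r$. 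The main technical obstacle is the local analysis of $\mathcal{L}_p$: one must verify uniformly in $p$ and $m \leq \ell_j$ that the higher-order terms and the truncation at $m = \ell_j$ yield only an absolutely convergent remainder, justifying the Euler-product factorization cleanly, and that the quantitative information about $g$ provided by Proposition~\ref{Prop1} suffices to extract the leading $(k^2-1)/2 \cdot a(p)^2/p$ term.
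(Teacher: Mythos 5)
Your outline follows the same broad strategy as the paper (expand into short Dirichlet polynomials, apply Proposition \ref{Prop1} with $v=1$, isolate the main term as an Euler product, and use the Rankin--Selberg estimate to get $(\log X)^{(k^2-1)/2}$), and your local computation $(k-1)+\tfrac{(k-1)^2}{2}=\tfrac{k^2-1}{2}$ is the right calculation.  However, there is a genuine gap at the step you yourself flag as the ``main technical obstacle,'' and it is not a routine verification.

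The claimed factorization ``$C_1 X\prod_{p\in\cup_j P_j}\mathcal{L}_p$'' with $\mathcal{L}_p=\sum_{m=0}^{\ell_j}\cdots$ does not hold, because the truncation built into $\mathcal{A}_j(d)=E_{\ell_j}((k-1)\mathcal{P}_j(d))$ imposes the \emph{global} constraint $\Omega(n_j)\le\ell_j$ (that is, $\sum_p m_p\le\ell_j$), not a per-prime constraint $m_p\le\ell_j$; so the coefficient $b_j(n_j)$ of the Dirichlet polynomial is not multiplicative and the sum over $n_j$ is not an Euler product.  The paper's proof resolves exactly this point by Rankin's trick: replace the truncated sum by the untruncated Euler product of \eqref{6.2}, and bound the discrepancy using $2^{\Omega(n_j)-\ell_j}\ge1$ whenever $\Omega(n_j)>\ell_j$, which gives an error $\ll 2^{-\ell_j}\exp\bigl(4\sum_{p\in P_j}a(p)^2/p\bigr)$.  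Crucially, this error is only small \emph{relative to} the intended main factor $\exp\bigl(\tfrac{k^2-1}{2}\sum_{p\in P_j}a(p)^2/p\bigr)$ because of the specific choice $\ell_j\ge10\sum_{p\in P_j}a(p)^2/p$, which is why the recursive definition $\ell_{j+1}=2\lceil100\log\ell_j\rceil$ and the intervals $P_j=(X^{1/\ell_{j-1}^2},X^{1/\ell_j^2}]$ are set up the way they are.  Without this step your main-term analysis is unjustified, and without the numerics behind $\ell_j$ the relative error need not be controllable.

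A related point: since $\tfrac{k^2-1}{2}\le0$ for $0\le k\le1$, the target $(\log X)^{(k^2-1)/2}$ is a \emph{decaying} (or bounded) quantity, so ``$o(X)$'' and ``exponentially small in $\ell_{r+1}$'' cannot be taken loosely.  For the tail terms $\prod_{j\le r}\mathcal{A}_j\cdot(e^2\mathcal{P}_{r+1}/\ell_{r+1})^{\ell_{r+1}}$ the paper carries out the Stirling/Rankin computation to get a factor $\ll e^{-\ell_{r+1}/3}$ \emph{multiplying} $(\log X)^{(k^2-1)/2}$, and shows the product over $p\in\cup_{j\le r}P_j$ produces at most $(\log X)^{(k^2-1)/2}\ell_r^{1-k^2}$, with $\ell_r^{1-k^2}$ absorbed by $e^{-\ell_{r+1}/2}$ precisely because $\ell_r\approx e^{\ell_{r+1}/200}$.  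Your sketch asserts these tails are negligible but doesn't do the bookkeeping that makes them negligible relative to the decaying target.  So the proposal has the right shape, but the Rankin-trick de-truncation and the quantitative interplay between $\ell_r$, $\ell_{r+1}$, and the Mertens-type sums over $P_j$ are missing, and those are the substance of the proof.
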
 
 
 The implied constants in Propositions \ref{Prop3} and \ref{Prop4} depend only on $k$, $\Phi$, and $E$.   We defer the 
 proofs of these propositions to sections 8 and 9, and now complete the proof of Theorem \ref{thm1}.

\subsection{Completing the proof of Theorem \ref{thm1}}  

 Applying Propositions \ref{Prop2}, \ref{Prop3}, and \ref{Prop4} we obtain 
that 
$$ 
\sum_{d\in {\mathcal E}(\kappa,a)} L(\tfrac 12,E_d)^{k} \Phi\Big(\frac{\kappa d}{X}\Big) \ll X (\log X)^{\frac{k(k-1)}2}.
$$
Now summing over the different possibilities for $a$ and $\kappa$, and breaking the range $|d|\le X$ into 
dyadic blocks, we obtain Theorem 1.

 \section{Proof of Theorem \ref{thm2}} 
 
 \noindent We begin with a well-known result on the average size of $a(p)^2$, which will be 
 useful throughout the paper.  The proof of the lemma follows from the Rankin-Selberg theory for $L(s,E)$; see 
 Chapter 5 of Iwaniec and Kowalski \cite{IK}.

\begin{lemma} \label{RS}  There exists a positive constant $c$ such that 
$$ 
\sum_{p\le x} a(p)^2 \log p = x + O(x\exp(-c\sqrt{\log x})). 
$$ 
Further, there exists a constant $B$ such that 
$$ 
\sum_{p\le x} \frac{a(p)^2}{p} = \log \log x + B+ O\Big(\frac{1}{\log x}\Big). 
$$ 
\end{lemma}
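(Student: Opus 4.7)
The plan is to deduce both statements from the Rankin--Selberg identity
$$
\sum_{n=1}^{\infty} \frac{a(n)^2}{n^s} \;=\; \frac{\zeta(s)\, L(s,\text{sym}^2 E)}{\zeta(2s)},
$$
valid for $\text{Re}(s) > 1$ up to finitely many bad Euler factors at primes $p\mid N$. Writing $a(p) = \alpha_p + \beta_p$ with $\alpha_p\beta_p = 1$ and $|\alpha_p| = 1$ (Deligne), the identity reduces to the elementary local computation
$$
\sum_{k\ge 0} a(p^k)^2 X^k \;=\; \frac{1-X^2}{(1-\alpha_p^2 X)(1-X)^2(1-\beta_p^2 X)}.
$$
The function $L(s,\text{sym}^2 E)$ is entire (Shimura) and admits a standard de la Vall\'ee Poussin zero-free region of the form $\text{Re}(s)\ge 1-c/\log(|t|+2)$; since $\zeta(2s)$ is non-vanishing on $\text{Re}(s)>1/2$, the Dirichlet series above has a meromorphic continuation to $\text{Re}(s) > 1/2$ with a unique, simple pole at $s = 1$.

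Taking the logarithmic derivative yields
$$
\sum_{p} \frac{a(p)^2 \log p}{p^s} \;=\; -\frac{\zeta'}{\zeta}(s) \;-\; \frac{L'}{L}(s, \text{sym}^2 E) \;+\; 2\frac{\zeta'}{\zeta}(2s) \;+\; H(s),
$$
where $H(s)$, which collects the prime-power contribution from $p^k$ with $k \ge 2$, is holomorphic on $\text{Re}(s)>1/2$. One checks using $a(p)^2 = 2 + \alpha_p^2 + \beta_p^2$ that the coefficient of $\log p/p^s$ on both sides matches. The right-hand side is analytic inside the joint zero-free region of $\zeta$ and $L(\,\cdot\,,\text{sym}^2 E)$ apart from a simple pole of residue $1$ at $s=1$ (contributed by $-\zeta'/\zeta$, while $L(1,\text{sym}^2 E)\ne 0$ contributes nothing). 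A standard Perron-type inversion, shifted to $\text{Re}(s)=1-c/\log x$ and controlled using convexity bounds for $L(s,\text{sym}^2 E)$ in vertical strips, then produces
$$
\sum_{p \le x} a(p)^2 \log p \;=\; x + O\bigl(x \exp(-c'\sqrt{\log x})\bigr),
$$
which is the first assertion.

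The second assertion follows by partial summation from the first. Writing $T(t) = \sum_{p\le t} a(p)^2 \log p$, one has
$$
\sum_{p\le x} \frac{a(p)^2}{p} \;=\; \int_{2^-}^{x}\frac{dT(t)}{t\log t} \;=\; \int_2^x \frac{dt}{t\log t} + \int_2^x \frac{d(T(t)-t)}{t\log t},
$$
and integrating the second integral by parts, using the bound $T(t)-t \ll t\exp(-c\sqrt{\log t})$, contributes only $O(1/\log x)$. The first integral evaluates to $\log\log x$ plus a constant, so the convergent remainder (including the contribution from bad primes and the integration constants) can be absorbed into $B$. The only nontrivial analytic input, and the natural obstacle if one were to attempt this from scratch, is the zero-free region for $L(s,\text{sym}^2 E)$; this is classical, and the treatment in Chapter~5 of \cite{IK} supplies exactly the statement needed here.
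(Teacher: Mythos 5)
Your argument is correct and is precisely the one the paper has in mind: the paper offers no details of its own, just the pointer to Rankin--Selberg theory in Chapter~5 of Iwaniec--Kowalski, and your proof fills in exactly that sketch (the factorization $\sum a(n)^2 n^{-s} = \zeta(s)L(s,\mathrm{sym}^2 E)/\zeta(2s)$ up to bad Euler factors, holomorphy and nonvanishing of $L(s,\mathrm{sym}^2 E)$ at $s=1$, the zero-free region, Perron inversion, and partial summation). The only point worth flagging is that ``classical'' for the zero-free region of $L(s,\mathrm{sym}^2 E)$ deserves a word of care in general, but since $E$ is fixed here there is no Siegel-zero issue and the claim is sound.
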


 Let $X$ be large, and let $P$ denote the set of primes below $X^{1/(\log \log X)^2}$ with $p\nmid N_0$.  
Let $d \in {\mathcal E}(\kappa,a)$ with $X \le |d|\le 2X$, and define 
$$ 
{\mathcal P}(d) = \sum_{p\in P} \frac{a(p)}{\sqrt{p}} \chi_d(p).
$$

 \begin{proposition} \label{Prop7} Let $k$ be a given non-negative integer. Then, for large $X$ and any $v\le X^{\frac 12-\epsilon}$,  
$$
 \sum_{\substack{d \in {\mathcal E}(\kappa,a) \\ v|d}} {\mathcal P}(d)^k \Phi\Big(\frac{\kappa d}{X} \Big) 
 =  \Big( \sum_{ \substack{d\in {\mathcal E}(\kappa, a)\\ v|d}} \Phi \Big(\frac{\kappa d}{X}\Big) \Big) (\log \log X)^{\frac{k}{2} }
 (M_k +o(1)), 
 $$ 
 where $M_k$ denotes the $k$-th Gaussian moment: 
 $$ 
 M_k = \frac{1}{\sqrt{2\pi}} \int_{-\infty}^{\infty} x^k e^{-\frac{x^2}{2} } dx  = 
 \begin{cases} 
 0&\text{ if  } k \text{ is odd}\\ 
 \frac{k!}{2^{k/2}(k/2)!} &\text{ if } k \text{ is even.}
 \end{cases}
 $$
\end{proposition}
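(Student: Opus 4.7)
The plan is to expand $\mathcal P(d)^k$ as a $k$-fold Dirichlet sum over primes in $P$, interchange summation with the sum over $d$, and apply Proposition \ref{PropDirpoly} to each resulting inner character sum indexed by $n = p_1\cdots p_k$. Writing the left-hand side as
\[
\sum_{p_1,\ldots,p_k \in P} \frac{a(p_1)\cdots a(p_k)}{\sqrt{p_1\cdots p_k}} \sum_{\substack{d\in\mathcal E(\kappa,a)\\ v\mid d}}\chi_d(p_1\cdots p_k)\, \Phi\!\left(\tfrac{\kappa d}{X}\right),
\]
I would first dispose of the non-square products: since each $p_i \le X^{1/(\log\log X)^2}$, we have $n \le X^{o(1)}$ and $|P|^k \le X^{o(1)}$, so the error $O(X^{1/2+\epsilon}\sqrt n)$ from Proposition \ref{PropDirpoly} contributes in total at most $X^{1/2+\epsilon+o(1)}$ (using $|a(p_i)|\le 2$). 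Choosing the $\epsilon$ in Proposition \ref{PropDirpoly} strictly smaller than the one in the hypothesis $v \le X^{1/2-\epsilon}$ makes this $o(X/v \cdot (\log\log X)^{k/2})$, hence negligible.

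For square $n = p_1\cdots p_k$ I would partition tuples by the underlying multiset of primes. When $k$ is odd no such tuples exist, matching $M_k = 0$. When $k=2r$ is even the dominant contribution comes from \emph{pure pairings}: given a set $\{q_1,\ldots,q_r\}$ of $r$ distinct primes in $P$, the number of ordered $k$-tuples with this multiset is $k!/2^r$, and each contributes $\prod_i a(q_i)^2/q_i$ times the local factor $\prod_{q_i \nmid v}(1+1/q_i)^{-1}$ inherited from Proposition \ref{PropDirpoly}. The identity
\[
\sum_{\{q_i\}\text{ distinct}}\prod_i x_{q_i} = \tfrac{1}{r!}\Bigl(\sum_q x_q\Bigr)^r + O\!\Bigl(\Bigl(\sum_q x_q\Bigr)^{r-1} \sum_q x_q^2\Bigr)
\]
then converts this to $M_k\cdot S(v)^r\cdot (1+o(1))$, where $S(v):=\sum_{q\in P}(a(q)^2/q)\, c_q(v)$, with $c_q(v)=1$ if $q\mid v$ and $(1+1/q)^{-1}$ otherwise. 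Tuples in which some underlying prime has multiplicity $\ge 4$ are handled identically and contribute only $O((\log\log X)^{r-1})$, since $\sum_q a(q)^4/q^2$ converges by the Hasse bound $|a(q)|\le 2$.

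It remains to evaluate $S(v)$. Since $|1-c_q(v)|\le 1/(q+1)$, the difference between $S(v)$ and $\sum_{q\in P} a(q)^2/q$ is at most $O(\sum_q a(q)^2/q^2) = O(1)$, and Lemma \ref{RS} (with $Y=X^{1/(\log\log X)^2}$, noting $\log\log Y=\log\log X\cdot(1+o(1))$) gives $S(v)=\log\log X\cdot(1+o(1))$ \emph{uniformly in $v$}. Finally, comparing this expansion with its $k=0$ specialization $\sum_{d,\,v\mid d}\Phi(\kappa d/X) = \check\Phi(0)(X/vN_0)\prod_{p\mid v}(1+1/p)^{-1}\prod_{p\nmid N_0}(1-1/p^2) + O(X^{1/2+\epsilon})$, the $v$-dependent local factor and the $X/v$ normalization cancel from the quotient, yielding the identity claimed in the proposition. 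The main obstacle in my view is this uniformity in $v$: it reduces to the observation above that the $v$-dependent correction to $S(v)$ is absolutely bounded rather than logarithmically divergent.
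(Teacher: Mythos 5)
Your expansion–and–square-sieve argument is the same as the paper's (expand $\mathcal P(d)^k$, apply Proposition~\ref{PropDirpoly}, isolate the pure pairings), but there is a genuine slip in how you handle primes $q\in P$ that divide $v$, and it propagates into your uniformity claim.

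You assign a weight $c_q(v)=1$ to such primes, as the ``ratio'' of the local factor in Proposition~\ref{PropDirpoly} to its $k=0$ specialization. But Proposition~\ref{PropDirpoly} is stated only for $(n,v)=1$, so it does not apply when some $p_i\mid v$, and the extrapolated formula you are implicitly using has no basis. In fact the true value is zero: for $v\mid d$ and $q\mid v$ we have $\chi_d(q)=0$, so every summand of the $k$-fold expansion in which some $p_i\mid v$ vanishes identically. The correct move (and the one the paper makes at the outset) is to note that for $v\mid d$ one has $\mathcal P(d)=\sum_{p\in P_v}a(p)\chi_d(p)/\sqrt p$ with $P_v=\{p\in P:\,p\nmid v\}$, and to run the whole argument over $P_v$. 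Equivalently, $c_q(v)=0$ for $q\mid v$, not $1$.

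This changes the size of the correction you need to control for uniformity in $v$. With $c_q(v)=0$,
\[
S(v)-\sum_{q\in P}\frac{a(q)^2}{q}=-\sum_{\substack{q\in P\\ q\mid v}}\frac{a(q)^2}{q}+O(1),
\]
and the sum over $q\mid v$ is \emph{not} $O(1)$ in general: taking $v$ to be a product of the small primes (still admissible since $v\le X^{1/2-\epsilon}$), it can be as large as $\asymp\log\log\log X$. Your stated bound ``$|S(v)-\sum a(q)^2/q|=O(\sum a(q)^2/q^2)=O(1)$'' is therefore false; the true uniform bound is $O(\log\log\log X)$. This is still $o(\log\log X)$, so the conclusion $S(v)=(\log\log X)(1+o(1))$ and hence the proposition survive, but the justification you gave for the very step you flagged as ``the main obstacle'' does not hold as written. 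The rest of the combinatorics (no square products when $k$ is odd; $k!/2^r$ orderings of a pure pairing; higher multiplicities losing a factor of $\log\log X$) matches the paper.
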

\begin{proof}  Let $P_v$ denote the set of primes in $P$ that do not divide $v$.  Expanding ${\mathcal P}(d)^k$, we obtain 
 \begin{equation} 
 \label{4.1}
\sum_{\substack{d\in {\mathcal E}(\kappa,a) \\ v|d}}
\mathcal{P}(d)^k \Phi \Big ( \frac{\kappa d}{X} \Big )= \sum_{p_1 \in P_v} \ldots \sum_{p_k \in P_v}
\frac{a(p_1)\ldots a(p_k)}{\sqrt{p_1 \ldots p_k}} 
\sum_{\substack { d \in {\mathcal E}(\kappa,a) \\ v|d}} \chi_d(p_1 \ldots p_k) \Phi \Big ( \frac{\kappa d}{X}
\Big ).
\end{equation} 
Now we use Proposition \ref{PropDirpoly}.   If $p_1\cdots p_k$ is not a perfect square 
(which is always the case when $k$ is odd) then the sum over $d$ above is $O(X^{\frac 12+\epsilon} 
(p_1\cdots p_k)^{\frac 12})$, and the contribution of these remainder terms to \eqref{4.1} is $O(X^{\frac 12+\epsilon})$.  
This proves the proposition in the case when $k$ is odd.  

When $k$ is even, we have a main term arising from the case $p_1\cdots p_k =\square$.  
 This term contributes 
\begin{equation} 
\label{4.2}
\widehat{\Phi}(0) \frac{X}{vN_0} \prod_{p \nmid N_0} \Big ( 1 - \frac{1}{p^2}
\Big ) \prod_{p|v} \Big(1+\frac1p\Big)^{-1} 
\sum_{\substack{p_1, \ldots, p_k \in P_v \\ p_1 \ldots p_k = \square}}
\frac{a(p_1)\ldots a(p_k)}{\sqrt{p_1 \cdots p_k}} \prod_{p| p_1\cdots p_k} \Big(1+ \frac{1}{p}\Big)^{-1}.
\end{equation} 
 Suppose $q_1<q_2<\ldots<q_s$ are the distinct primes in $p_1$, $\ldots$, $p_k$.  Then each $q_j$ appears 
 an even number (say $a_j\ge 2$) of times among the $p_j$, and thus $s\le k/2$.  The terms with $s<k/2$ contribute an 
 amount 
 $$
 \ll \frac{X}{v} \Big(\sum_{p\in P_v} \frac{a(p)^2}{p} \Big)^{s} \ll X(\log \log X)^{\frac{k}2 -1},
 $$ 
 which is an acceptable error term.  When $s=k/2$, all the $a_j$ must equal $2$, and so these terms contribute 
 $$ 
 \widehat{\Phi}(0)  \frac{X}{vN_0} \prod_{p \nmid N_0} \Big ( 1 - \frac{1}{p^2}
\Big ) \prod_{p|v} \Big(1+\frac1p\Big)^{-1}  \frac{k!}{2^{k/2} (k/2)!} \sum_{\substack{{q_1,\ldots, q_{k/2} \in P_v} \\ {q_j 
\text{ distinct }}}} \frac{a(q_1)^2 \cdots a(q_{k/2})^2}{(q_1+1)\cdots (q_{k/2}+1)}.
$$ 
 Appealing to Lemma \ref{RS}, this establishes our proposition.  
%
\end{proof}

Since the normal distribution is determined by its moments, by taking $\Phi$ to approximate the 
characteristic function of $[1,2]$, summing over dyadic blocks and then over all possibilities for $\kappa$ and $a$, 
we find from Proposition \ref{Prop7} (with $v=1$) 
that for any fixed $V\in {\Bbb R}$ and as $X\to \infty$ 
\begin{align} 
\label{PdG} 
\Big| \Big\{ d\in {\mathcal E} , 20< |d|\le X: \ \frac{{\mathcal P}(d) }{\sqrt{\log \log X}} \ge V \Big\} \Big| 
\sim |\{&d\in {\mathcal E}, 20<|d|\le X\}| \notag\\ 
&\times \Big( \frac{1}{\sqrt{2\pi}}  \int_{V}^{\infty} e^{-\frac{x^2}{2}} dx + o(1)\Big). 
\end{align} 

Now if $d\in {\mathcal E}$ with $X/\log X <|d|\le X$ 
satisfies $\log L(\tfrac 12,E_d) + \frac 12\log \log X \ge V\sqrt{\log \log X}$ then we must have one of the 
following three cases: 
(1) ${\mathcal P}(d) \ge (V-\epsilon) \sqrt{\log \log X}$, or 
(2) ${\mathcal P}(d) \le -\log \log X$, or  
(3)  $-\log \log X \le {\mathcal P}(d)\le (V-\epsilon)\sqrt{\log \log X}$ but $L(\frac 12, E_d) (\log X)^{\frac 12} \exp(-{\mathcal P}(d)) \ge \exp(\epsilon \sqrt{\log \log X})$.  

From \eqref{PdG} we 
already have a satisfactory estimate for the frequency with which the first case happens.  
Next using Proposition \ref{Prop7} with $v=1$ and $k=2$ we see that case (2) appears with frequency $o(X)$.  
Finally consider case 3.  Put $\ell = 20 \lfloor \log \log X\rfloor$ so that $\ell$ is an even integer with $\ell \ge e^2 |{\mathcal P}(d)|$.   By Lemma 1 we must have $L(\frac 12,E_d) (\log X)^{\frac 12} E_\ell (-{\mathcal P}(d)) \gg \exp(\epsilon \sqrt{\log \log X})$.  
Now, a small modification of Proposition 5 shows that 
\begin{equation}
\label{LPbound} 
\sum_{\substack { d \in {\mathcal E}(\kappa,a) \\ X/\log X \le |d| \le X }}   L(\tfrac 12, E_d) (\log X)^{\frac 12} 
E_{\ell}(-\mathcal{P}(d)) 
\ll X \log \log X, 
\end{equation}
and so case (3) also occurs with frequency $o(X)$.  This completes our proof.

 \section{Proof of Theorem \ref{thm3}} 
 
 \noindent Recall that the elliptic curve $E$ is given in Weierstrass form by $y^2 =f(x)$ for a monic cubic polynomial 
 $f$ with integer coefficients, and that $K$ is the splitting field of $f$ over ${\Bbb Q}$.  Let $c(p)$ denote $1$ plus the 
 number of solutions to $f(x) \equiv 0 \pmod p$, so that $c(p)=1$, $2$, or $4$.  The Tamagawa number $\text{Tam}(E_d) = 
 \prod_{p} T_p(d)$ may be calculated using Tate's algorithm (see \cite{Rubin}).  Primes dividing the discriminant of $f$ 
 make a bounded contribution, and for a prime not dividing the discriminant the factor $T_p(d)$ equals $c(p)$ if $p|d$, and $T_p(d)=1$ otherwise.  
 
 \begin{lemma} 
 \label{Tam}  In the notation of Conjecture 1, we have 
 $$ 
 \sum_{p\le x} \frac{\log c(p)}{p} = \Big( -\mu(E)- \frac 12\Big) \log \log x  +O(1), 
 $$ 
 and 
 $$ 
 \sum_{p\le x} \frac{(\log c(p))^2}{p} = (\sigma(E)^2-1) \log \log x + O(1). 
 $$ 
 \end{lemma}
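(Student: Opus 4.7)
The plan is to translate the sums into Chebotarev densities over the Galois group $G = \mathrm{Gal}(K/\mathbb{Q})$, carry out the density calculation for each of the four possibilities for $G$, and check that the constants match the values of $\mu(E)$ and $\sigma(E)^2$ declared in Conjecture \ref{Conj1}.

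First I would note that for any prime $p$ not dividing the discriminant of $f$ (and in particular for all but finitely many $p$), the number of roots of $f(x) \pmod p$ equals the number of fixed points of the Frobenius element $\mathrm{Frob}_p \in G$ acting on the three roots of $f$ in $\overline{\mathbb{Q}}$; in particular $c(p) - 1 \in \{0,1,3\}$ depends only on the conjugacy class of $\mathrm{Frob}_p$. Thus $\log c(p)$ and $(\log c(p))^2$ are class functions on $G$, and for a conjugacy class $C \subset G$ I write $\lambda(C) := \log c(p)$ for any prime with $\mathrm{Frob}_p \in C$.

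Next I would invoke the effective Chebotarev density theorem, combined with partial summation in the standard Mertens fashion, to obtain
\begin{equation*}
\sum_{\substack{p \le x \\ \mathrm{Frob}_p \in C}} \frac{1}{p} = \frac{|C|}{|G|} \log \log x + O(1),
\end{equation*}
uniformly over the finitely many conjugacy classes $C$ of $G$, with a bounded contribution from the ramified primes and primes dividing the discriminant. Summing $\lambda(C)$ and $\lambda(C)^2$ weighted by these densities then gives
\begin{equation*}
\sum_{p \le x} \frac{\log c(p)}{p} = \bigl(\mathbb{E}_G \lambda\bigr) \log \log x + O(1), \qquad \sum_{p \le x} \frac{(\log c(p))^2}{p} = \bigl(\mathbb{E}_G \lambda^2\bigr) \log \log x + O(1),
\end{equation*}
where $\mathbb{E}_G$ denotes the uniform average over $G$.

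The remaining step is a case analysis according to the four possibilities $G \in \{1, \mathbb{Z}/2, \mathbb{Z}/3, S_3\}$, using that the number of fixed points of $g \in G$ on the three roots is the character of the permutation representation. For example, when $G = S_3$ the three classes (identity, transpositions, $3$-cycles) have sizes $1,3,2$ and fixed-point counts $3,1,0$, so $c(p) \in \{4,2,1\}$ with densities $\tfrac16,\tfrac12,\tfrac13$, yielding $\mathbb{E}\lambda = \tfrac56 \log 2$ and $\mathbb{E}\lambda^2 = \tfrac76 (\log 2)^2$, matching $-\mu(E)-\tfrac12$ and $\sigma(E)^2-1$ for the $[K:\mathbb{Q}]=6$ entry. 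Analogous one-line verifications handle the trivial, $\mathbb{Z}/2$, and $\mathbb{Z}/3$ cases, each matching the corresponding line of Conjecture \ref{Conj1}.

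There is no real obstacle here; the only care needed is to absorb the ramified primes, the finitely many primes dividing the discriminant of $f$, and the additive $O(1)$ from the Chebotarev--Mertens estimate into the error term, and to keep the bookkeeping of the conjugacy-class data straight across the four cases.
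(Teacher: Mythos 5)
Your proposal is correct and follows essentially the same route as the paper: invoke the Chebotarev density theorem to read off the densities of primes with $c(p) \in \{1,2,4\}$ for each of the four possible Galois groups, then plug into the Mertens-type sum and match the constants to $\mu(E)$ and $\sigma(E)^2$. The paper works out only the $S_3$ case explicitly and declares the other three analogous; your write-up fills in the same computation with the same bookkeeping.
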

 \begin{proof}  Let us consider the case when $[K:{\Bbb Q}]=6$, so that the extension has Galois group $S_3$.  
 The Chebotarev density theorem gives that $c(p)=4$ for a set of primes of density $1/6$, $c(p)=2$ on a set of primes of density $1/2$, and $c(p)=1$ on a set of primes of density $1/3$.  This proves the lemma in this case, and the other 
 cases are similar.
 \end{proof}   
 
 Let $X$ be large and define ${\mathcal P}(d)$ as in Section 4.  Further define, for primes $p\nmid N_0$,  
 $$ 
 C_p(d) = \begin{cases} 
 \frac{p}{p+1} \log c(p) &\text{ if } p|d\\ 
 -\frac{1}{p+1} \log c(p) &\text{ if }p\nmid d. 
 \end{cases}
 $$ 
Put $z= X^{1/(\log \log X)^2}$ and set 
 \begin{equation} 
 \label{E5.1} 
 {\mathcal C}(d) = \sum_{\log X \le p \le z} C_p(d) = \sum_{\log X \le p \le z} \Big( \log T_p(d) - \frac{\log c(p)}{p+1}\Big). 
 \end{equation} 
 Since the real period $\Omega(E_d)$ is $\asymp 1/\sqrt{|d|}$, and $|E_d({\Bbb Q})_{\text{tors}}|$ is 
 bounded, in order to prove Theorem 3, it suffices to estimate 
 \begin{equation} 
 \label{E5.2} 
 \Big| \Big\{ d\in {\mathcal E}, \frac{X}{\log X}\le |d|\le X: \ 
 \frac{\log L(\tfrac 12,E_d) - \sum_{p|d} \log c(p) -\mu(E) \log \log X}{\sqrt{\sigma(E)^2\log \log X}} \ge V \Big\} \Big|.
 \end{equation} 
 If $d$ is a discriminant counted in \eqref{E5.2} then one of the following four cases must happen: (1) ${\mathcal P(d)} - 
 {\mathcal C}(d) \ge (V-\epsilon) \sqrt{\sigma(E)^2 \log \log X}$, or (2) ${\mathcal P}(d) \le -\log \log X$, or (3) $-\log \log X \le {\mathcal P}(d) \le (V-\epsilon) \sqrt{\log \log X}$ but $L(\tfrac 12, E_d) (\log X)^{\frac 12} \exp(-{\mathcal P}(d)) \ge 
 \exp(\epsilon \sqrt{\log \log X})$, or (4) $|\log \text{Tam}(E_d) +(\mu(E)+\frac 12) \log \log X - {\mathcal C}(d)| \ge \frac{\epsilon}{10} \sqrt{\log \log X}$.  
 
 From our work in Section 4, we know that cases 2 and 3 occur for at most $o(X)$ discriminants $d$.   Now consider 
 case 4.  By Lemma \ref{Tam} 
 $$ 
 |\log \text{Tam}(E_d) + (\mu(E)+\tfrac 12) \log \log X -{\mathcal C}(d)| 
 = \sum_{\substack{ p|d \\ p<\log X}} \log c(p)  +\sum_{\substack {p|d \\ p>z}}\log c(p) + O(\log \log \log X). 
 $$ 
 Summing the above over all $d\in {\mathcal E}$ with $X/\log X\le |d|\le X$ we get 
 \begin{align*}
 \sum_{\substack{ d\in {\mathcal E} \\ X/\log X\le |d|\le X}} &|\log \text{Tam}(E_d) + (\mu(E)+\tfrac 12) \log \log X -{\mathcal C}(d)|  
\\
& \ll X\log\log  \log X + X\sum_{p<\log X} \frac{\log c(p)}{p} +X \sum_{X\ge p>z} \frac{\log c(p)}{p} \ll X\log \log \log X. 
 \end{align*}
 Therefore case 4 also occurs with frequency $o(X)$.  It remains lastly to estimate the occurrence  of case 1, 
 which we achieve by computing the moments of ${\mathcal P}(d) -{\mathcal C}(d)$, and showing that these approximate 
 the moments of a normal distribution with mean zero and variance $\sigma(E)^2\log \log X$; our work here follows the 
 argument in \cite{GrSo}.   
 Since, as noted already, the Gaussian is determined by its moments, this completes the proof of Theorem 3.

 \begin{proposition} \label{Prop8}  Let $k$ be a given non-negative integer.  Then for large $X$ we have 
 $$ 
 \sum_{d\in {\mathcal E}(\kappa, a)} ({\mathcal P}(d) -{\mathcal C}(d))^k \Phi\Big(\frac{\kappa d}{X}\Big) = 
 (\sigma(E)^2 \log \log X)^{\frac k2} (M_k+o(1)) \sum_{d\in {\mathcal E}(\kappa, a)} \Phi\Big(\frac{\kappa d}{X}\Big).
 $$ 
 \end{proposition}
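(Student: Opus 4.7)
The plan is to establish Proposition~\ref{Prop8} by the method of moments, following the approach of Granville--Soundararajan~\cite{GrSo}. Since the Gaussian distribution is determined by its moments, it suffices to show that for every fixed non-negative integer $k$, the $k$-th moment of ${\mathcal P}(d)-{\mathcal C}(d)$ averaged against $\Phi(\kappa d/X)$ matches the Gaussian moment $M_k(\sigma(E)^2\log\log X)^{k/2}$ times the total mass $\sum_{d\in{\mathcal E}(\kappa,a)}\Phi(\kappa d/X)$, up to lower order terms.

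First I would expand $({\mathcal P}(d)-{\mathcal C}(d))^k$ by the binomial theorem as a signed sum of ${\mathcal P}(d)^j{\mathcal C}(d)^{k-j}$, and then expand each factor multilinearly: ${\mathcal P}(d)^j$ becomes a $j$-fold sum over primes $p_1,\ldots,p_j\in P$ of $\frac{a(p_1)\cdots a(p_j)}{\sqrt{p_1\cdots p_j}}\chi_d(p_1\cdots p_j)$. Rewriting $C_p(d)=\log c(p)(\mathbf{1}_{p|d}-\frac{1}{p+1})$ makes ${\mathcal C}(d)^{k-j}$ a $(k-j)$-fold sum over primes $q_1,\ldots,q_{k-j}\in[\log X,z]$, and expanding the resulting indicator products over subsets $S\subseteq\{1,\ldots,k-j\}$ reduces every summand to a product of the form $\chi_d(n)\mathbf{1}_{v|d}$, with $n=p_1\cdots p_j$ and $v$ the squarefree radical of $\prod_{i\in S}q_i$, to which Proposition~\ref{PropDirpoly} applies directly.

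The main term comes from pairing (diagonal) configurations. For the ${\mathcal P}$ factor, Proposition~\ref{PropDirpoly} produces a main term only when $n$ is a perfect square, which forces each $p_i$ to appear an even number of times; pairing the $j$ indices and summing the diagonal contribution $\sum_{p\in P}\frac{a(p)^2}{p+1}\approx \log\log X$ via Lemma~\ref{RS} yields the usual Gaussian combinatorics for ${\mathcal P}$ alone. The ${\mathcal C}(d)^{k-j}$ factor is handled analogously: the indicator expansion produces exactly the centered variables $C_{q_i}(d)$ whose diagonal second moment is $\frac{p(\log c(p))^2}{(p+1)^2}\approx\frac{(\log c(p))^2}{p}$, and Lemma~\ref{Tam} combined with the pairing count contributes the remaining $(\sigma(E)^2-1)\log\log X$ share of the variance. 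Crucially, cross-terms mixing a $p_i\in P$ with a $q_{i'}\in[\log X,z]$ contribute nothing to leading order, even when $p_i=q_{i'}$: the product $\chi_d(p)(\mathbf{1}_{p|d}-\frac{1}{p+1})$ reduces to $-\chi_d(p)/(p+1)$ (since $\chi_d(p)\mathbf{1}_{p|d}=0$), and Proposition~\ref{PropDirpoly} applied with $n=p$ non-square yields only an $O(X^{1/2+\epsilon}\sqrt{p})$ error. Hence ${\mathcal P}$ and ${\mathcal C}$ decouple, the total variance becomes $\log\log X+(\sigma(E)^2-1)\log\log X=\sigma(E)^2\log\log X$, and the Isserlis--Wick pairing combinatorics produce precisely $M_k(\sigma(E)^2\log\log X)^{k/2}$.

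The main obstacle will be the bookkeeping required to bound the non-pairing contributions and coincidences among primes. The error terms $O(X^{1/2+\epsilon}\sqrt{n})$ from Proposition~\ref{PropDirpoly} are easily absorbed since $v\sqrt{n}\le z^{k}=X^{k/(\log\log X)^2}=X^{o(1)}$, so summing over all tuples leaves only a negligible $O(X^{1/2+o(1)})$. More delicate is verifying that configurations where a prime occurs at least three times in the $p_i$ tuple, or where higher coincidences occur among the $q_i$ or between the two ranges, contribute only $O((\log\log X)^{k/2-1/2})$; this follows by estimating the resulting arithmetic sums via rapidly convergent series such as $\sum_p a(p)^2/p^{3/2}$ and $\sum_p (\log c(p))^2/p^{3/2}$, precisely as in the Gaussian moment argument of~\cite{GrSo}. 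Once these lower-order bounds are in hand, the proposition follows.
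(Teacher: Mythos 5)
Your proposal is correct and follows essentially the same route as the paper: binomial expansion, multilinear expansion into prime sums, reduction to sums of $\chi_d(n)\mathbf{1}_{v|d}$ handled by Proposition~\ref{PropDirpoly}, decoupling of the $\mathcal{P}$ and $\mathcal{C}$ contributions with the variance adding to $\sigma(E)^2\log\log X$, and Wick/Isserlis pairing combinatorics. The only cosmetic difference is organizational: you expand the indicator products $\prod_i(\mathbf{1}_{q_i|d}-\tfrac{1}{q_i+1})$ directly over subsets $S$, while the paper instead packages $\prod_i C_{q_i}(d)^{a_i}$ via a M\"obius identity $\sum_{v|(d,q_1\cdots q_\ell)}\bigl(\sum_{rs=v}\mu(r)\prod_i C_{q_i}(s)^{a_i}\bigr)$ and encodes the diagonal structure in a single multiplicative function $G$ with $G(p)=0$ and $G(p^2)\sim(\log c(p))^2/p$, which streamlines the bookkeeping of coincidences that you flag as the main obstacle.
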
 
\begin{proof}  Expanding out our sum, we must evaluate 
\begin{equation} 
\label{E5.3}
\sum_{j=0}^{k} \binom{k}{j} (-1)^j \sum_{(\log X)\le p_1, \ldots, p_j \le z} \sum_{d\in {\mathcal E}(\kappa, a)} 
C_{p_1}(d)\cdots C_{p_j}(d) {\mathcal P}(d)^{k-j} \Phi\Big(\frac{\kappa d}{X}\Big). 
\end{equation}  
Suppose that $q_1 < q_2 <\ldots <q_{\ell}$ are the distinct primes appearing in $p_1$, $\ldots$, $p_{j}$ 
and that $q_i$ appears with multiplicity $a_i$.  For such a choice of $p_1$, $\ldots$, $p_j$, note that 
$$ 
C_{p_1}(d)\cdots C_{p_j}(d) = \prod_{i=1}^{\ell} C_{q_i}(d)^{a_i} 
= \sum_{v|(d,q_1\cdots q_{\ell})} \Big(\sum_{rs=v} \mu(r) \prod_{i=1}^{\ell} C_{q_i}(s)^{a_i} \Big).
$$
Therefore the inner sum over $d$ in \eqref{E5.3} equals 
\begin{equation} 
\label{E5.4}
\sum_{v|q_1\cdots q_{\ell}} \Big(\sum_{rs=v}\mu(r) \prod_{i=1}^{\ell} C_{q_i}(s)^{a_i} \Big) 
\sum_{\substack { d\in {\mathcal E}(\kappa,a) \\ v|d }} {\mathcal P}(d)^{k-j} \Phi\Big(\frac{\kappa d}{X}\Big). 
\end{equation} 
Using our work from Section 4, and in particular \eqref{4.2} there, we see that the sum over $d$ above is 
$$ 
{\check \Phi}(0)\frac{X}{N_0} \prod_{p\nmid N_0} \Big(1-\frac{1}{p^2} \Big) 
\prod_{p|v} \Big(\frac{1}{p+1}\Big) \sum_{\substack{p_{j+1},\ldots, p_k \in P_{v} \\ p_{j+1}\cdots p_k=\square}} 
\frac{a(p_{j+1})\cdots a(p_k)}{\sqrt{p_{j+1}\cdots p_k }} \prod_{p|p_{j+1}\cdots p_k} \Big(1+\frac{1}{p}\Big)^{-1} + O(X^{\frac 12+\epsilon} ). 
$$ 
Above, as in Section 4, $P_v$ denotes the set of primes below $z$ that do not divide $N_0$ or $v$.  But the contribution 
of terms above with some $p_i$ dividing $v$ is easily seen to be $O(X(\log \log X)^k/(v\log X))$, since all the prime factors of $v$ are larger than $\log X$.   Thus removing the restriction that $p_{i}$ do not divide $v$ (for $j+1\le i\le k$), we see that 
the quantity in \eqref{E5.4} is,  up to an error $O(X(\log \log X)^k/(v\log X))$, 
\begin{equation} 
\label{E5.5} 
{\check \Phi}(0)\frac{X}{N_0} \prod_{p\nmid N_0} \Big(1-\frac{1}{p^2} \Big) \sum_{\substack{p_{j+1},\ldots, p_k \in P \\ p_{j+1}\cdots p_k=\square}} \frac{a(p_{j+1})\cdots a(p_k)}{\sqrt{p_{j+1}\cdots p_k }} \prod_{p|p_{j+1}\cdots p_k} \Big(1+\frac{1}{p}\Big)^{-1}  G(q_1^{a_1}\cdots q_{\ell}^{a_\ell}) , 
\end{equation} 
where 
$$ 
G(q_1^{a_1}\cdots q_{\ell}^{a_\ell}) = 
 \sum_{v|q_1\cdots q_{\ell}} \Big( \sum_{rs=v} \mu(r) \prod_{i=1}^{\ell} C_{q_i}(s)^{a_i}\Big) \prod_{p|v}\Big(\frac{1}{p+1}\Big).   
 $$
 Now it is easy to see that $G$ is a multiplicative function and that 
 $$
 G(p^a) = (\log c(p))^a \Big( \frac{1}{p+1} \Big(1-\frac{1}{p+1}\Big)^a + \frac{p}{p+1} \Big(-\frac{1}{p+1}\Big)^a \Big). 
 $$ 
 Hence $G(q_1^{a_1} \cdots q_\ell^{a_\ell})$ is non-zero only if all the $a_i$ are at least $2$, and $G(p^a) \ll (\log c(p))^a/p$ 
 for all $a\ge 2$.  
 
 We now use this evaluation of the inner sum over $d$ in \eqref{E5.3}, and then perform the sum over $p_1$, $\ldots$, $p_j$.  
 First note that the error term incurred above leads to a total error of at most $O(X (\log \log X)^{2k}/(\log X))$, which is 
 acceptable for the proposition.  Now let us simplify the main term that arose above.  Given $q_1 < \ldots < q_{\ell}$ 
 and $a_i \ge 2$ with $\sum a_i= j$, the number of choices for $p_1$, $\ldots$, $p_j$ is $j!/(a_1!\cdots a_{\ell}!)$.   Thus 
 the main term is 
 \begin{align} 
 \label{E5.6}
 {\check \Phi}(0) \frac{X}{N_0} \prod_{p\nmid N_0} \Big(1-\frac 1{p^2}\Big) &
 \sum_{j=0}^{k} \binom{k}{j} (-1)^j \sum_{\substack{p_{j+1}, \ldots, p_k \in P \\ p_{j+1}\cdots p_k=\square } } 
 \frac{a(p_{j+1})\cdots a(p_k)}{\sqrt{p_{j+1}\cdots p_k} } \prod_{p|p_{j+1}\cdots p_k} \Big(1+\frac 1p\Big)^{-1} \nonumber \\
&\times \sum_{\substack{\ell \\ a_1,\ldots a_{\ell} \ge 2 \\ \sum a_i=j}} \frac{j!}{\prod_{i} a_i!} \sum_{\substack{\log X \le q_1<\ldots < q_{\ell} < z}} G(q_1^{a_1} \cdots q_\ell^{a_\ell} ). 
 \end{align}
 
 Now if any $a_i \ge 3$, then the sum over $q_i$ above is seen to be $\ll (\log \log X)^{(j-1)/2}$, and the sum over 
 $p_{j+1}$, $\ldots$, $p_k$ contributes $\ll (\log \log X)^{(k-j)/2}$, leading to a total of $\ll X(\log \log X)^{(k-1)/2}$.  Thus 
 the effect of such terms is negligible, and we are left with the case when all $a_i=2$, so that $j=2\ell$ is even.  Since $G(p^2) 
 \sim (\log c(p))^2/p$, using Lemma 4, 
 these terms contribute 
 \begin{align*}
 \sim {\check \Phi}(0) \frac{X}{N_0} \prod_{p\nmid N_0} \Big(1-\frac{1}{p^2} \Big) &
 \sum_{\substack{j=0 \\ j \text{ even }}}^k \binom{k}{j} \frac{j!}{2^{j/2} (j/2)!} ((\sigma(E)^2-1) \log \log X)^{j/2} \\
&\times \sum_{\substack{p_{j+1}, \ldots, p_k \in P \\ p_{j+1}\cdots p_k=\square } } 
 \frac{a(p_{j+1})\cdots a(p_k)}{\sqrt{p_{j+1}\cdots p_k} } \prod_{p|p_{j+1}\cdots p_k} \Big(1+\frac 1p\Big)^{-1} .
 \end{align*}
 Now, the sum over $p_{j+1}, \ldots, p_k$ is zero unless $k-j$ is even (so that $k$ is even), and in that case, 
 arguing as in Section 4, it equals $\sim \frac{(k-j)!}{2^{(k-j)/2} ((k-j)/2)!} (\log \log X)^{(k-j)/2}$.  Using this above, we conclude the proposition.
\end{proof}  
 
  \section{Preliminary Lemmas} 

\subsection{The approximate functional equation}

\begin{lemma} \label{AFE}  For $d\in {\mathcal E}(\kappa, a)$ we have 
$$ 
L(\tfrac 12, E_d) = 2\sum_{\substack{{n=1} \\ {(n,N_0)=1}}}^{\infty} \frac{a(n)}{\sqrt{n} }\chi_d(n) W\Big( \frac{n}{|d|} \Big),  
$$ 
where for $\xi >0$ and any $c>0$ we define  
$$ 
W(\xi)  = \frac{1}{2\pi i } \int_{c-i\infty}^{c+i\infty} L_a(s+\tfrac 12) \Gamma(s) \Big(\frac{\sqrt{N}}{2\pi \xi}\Big)^s ds.
$$ 
The function $W(\xi)$ is smooth in $\xi >0$ and satisfies $W^{(k)}(\xi) \ll_k \xi^{-k} e^{-2\pi \xi/\sqrt{N}}$ for non-negative 
integers $k$ and further we have $W(\xi) = L_a(\frac 12) + O(\xi^{\frac 12-\epsilon})$ as $\xi \to 0$.  
\end{lemma}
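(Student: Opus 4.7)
My plan is to prove the approximate functional equation by the standard Mellin-inversion argument, with the extra wrinkle that we pull out the finite ``bad-prime'' factor $L_a(s)$ so that the sum on the right runs only over $(n, N_0) = 1$. The key structural input is that, for $d \in \mathcal{E}(\kappa, a)$, the character values $\chi_d(p)$ for $p \mid N_0$ are entirely determined by the residue class $a \pmod{N_0}$, so the Euler product for $L(s, E_d)$ gives a factorization
$$L(s, E_d) = L_a(s) \sum_{(n, N_0) = 1} \frac{a(n) \chi_d(n)}{n^s}$$
in which the second factor is the only $d$-dependent piece, and $L_a(s)$ is a fixed finite Euler product.

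The first step is to consider the integral
$$J := \frac{1}{2\pi i}\int_{(c)} \Lambda(s + \tfrac12, E_d)\, \frac{ds}{s}, \qquad c > \tfrac 12.$$
Since $\Lambda(\cdot, E_d)$ is entire and decays rapidly on vertical lines thanks to the Gamma factor, shifting the contour from $(c)$ to $(-c)$ crosses only the simple pole of $1/s$ at $s = 0$, contributing $\Lambda(\tfrac 12, E_d)$. Applying the change of variable $s \mapsto -s$ in the shifted integral and invoking the functional equation $\Lambda(1 - s, E_d) = \epsilon_E(d) \Lambda(s, E_d) = \Lambda(s, E_d)$ (using $\epsilon_E(d) = 1$ on $\mathcal{E}(\kappa,a)$) shows that this shifted integral equals $-J$. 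Hence $\Lambda(\tfrac 12, E_d) = 2J$, and dividing by the Archimedean factor gives $L(\tfrac 12, E_d) = 2J \cdot (\sqrt{N}|d|/(2\pi))^{-1/2}$. Substituting the Dirichlet series factorization above into $J$, interchanging summation and integration (legitimate by absolute convergence when $c > \tfrac12$), using $\Gamma(s+1)/s = \Gamma(s)$, and substituting $\xi = n/|d|$ produces exactly the desired identity with $W$ as defined.

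For the properties of $W$, smoothness on $\xi > 0$ is immediate from the integral representation, and differentiating $k$ times under the integral yields
$$\xi^k W^{(k)}(\xi) = \frac{(-1)^k}{2\pi i}\int_{(c)} \Gamma(s+k) L_a(s + \tfrac12) \left(\frac{\sqrt N}{2\pi \xi}\right)^{\!s} ds,$$
using $\Gamma(s)\,s(s+1)\cdots(s+k-1) = \Gamma(s+k)$. To extract the exponential decay, I would shift the contour far to the right and balance $(\sqrt N/(2\pi \xi))^s$ against $|\Gamma(s+k)|$ via Stirling, choosing the line of integration near the saddle $\sigma \asymp 2\pi\xi/\sqrt N$; boundedness of $L_a$ in a fixed right half-plane makes it harmless. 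For the $\xi \to 0$ asymptotic I would shift the contour in $W(\xi)$ leftward to $(-\tfrac12 + \epsilon)$, which is permissible because each local Euler factor of $L_a(s+\tfrac12)$ is non-vanishing in $\mathrm{Re}(s) > -\tfrac 12$, picking up only the simple pole of $\Gamma(s)$ at $s = 0$ with residue $L_a(\tfrac 12)$; the leftover integral is trivially $O(\xi^{1/2 - \epsilon})$. I expect the only delicate point to be the Stirling--saddle bookkeeping in the exponential decay bound for $W^{(k)}$; everything else is routine Mellin manipulation.
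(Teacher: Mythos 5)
Your derivation of the identity via the contour-shift / functional-equation argument for $\frac{1}{2\pi i}\int_{(c)}\Lambda(s+\tfrac12,E_d)\,ds/s$ is the same as the paper's (the paper writes it with the Archimedean prefactor already divided out, but that is cosmetic). The one place you diverge is the decay bound for $W^{(k)}$: the paper notes that since $\Gamma(s)$ is the Mellin transform of $e^{-x}$ and $L_a$ is a Dirichlet series over $N_0$-smooth integers, one can close the loop and write $W(\xi)$ \emph{explicitly} as $\sum_{p\mid n\Rightarrow p\mid N_0} a(n)\chi_d(n)n^{-1/2}e^{-2\pi\xi n/\sqrt{N}}$, from which smoothness and the decay of all derivatives are immediate by termwise differentiation. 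Your proposed rightward contour shift with Stirling-saddle bookkeeping near $\sigma\asymp 2\pi\xi/\sqrt{N}$ also works and is the more generic route, but it is doing by hand what the closed-form series gives for free; it also requires a little care because $L_a$ must be controlled on the shifted line, though being a finite Euler product it stays bounded. Your treatment of the $\xi\to 0$ asymptotic (shift to $\mathrm{Re}(s)=-\tfrac12+\epsilon$, pick up the residue $L_a(\tfrac12)$ at $s=0$) matches the paper exactly.
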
 
\begin{proof}  We begin with, for $c > \tfrac 12$,  
$$ 
I= \frac{1}{2\pi i} \int_{c-i\infty}^{c+i\infty} \Big(\frac{\sqrt{N}|d|}{2\pi}\Big)^{s} \Gamma(s+1) L(s+\tfrac 12, E_d) \frac{ds}{s}. 
$$ 
On the one hand, since 
$$ 
L(s+\tfrac 12, E_d) = L_a(s+\tfrac 12) \sum_{\substack{{n=1}\\ {(n,N_0)=1}}}^{\infty} \frac{a(n)}{\sqrt{n}} \chi_d(n),
$$ 
integrating term by term we see that  
$$ 
I = \sum_{\substack{{n=1}\\ {(n,N_0)=1}}}^{\infty} \frac{a(n)}{\sqrt{n}} \chi_d(n) W\Big(\frac{n}{|d|} \Big). 
$$ 
On the other hand, moving the line of integration in $I$ to $-c$ and using the functional equation (note that the sign is positive by assumption) we 
may see that 
$$ 
I= L(\tfrac 12,E_d) - I,
$$ 
and the stated identity follows.   Now from the definition of $W$ we see that 
$$ 
W(\xi) = \sum_{\substack{n=1 \\ p|n \implies p|N_0}}^{\infty} \frac{a(n)}{\sqrt{n}} \chi_d(n) e^{-2\pi \xi n/\sqrt{N}}.
$$ 
From this it is clear that $W$ is smooth in $\xi>0$, and the stated bound on $W^{(k)}(\xi)$ follows.  The last claim on the behavior of $W(\xi)$ as $\xi\to 0$ is obtained by moving the line of integration to Re$(s)=-\frac 12+\epsilon$, and picking up the contribution of the pole at $s=0$.  
\end{proof} 

\subsection{Poisson summation}

Here we apply the Poisson summation formula to understand real character sums.   Let $n$ be an odd integer 
and define the Gauss type sum $G_k(n)$ for any integer $k$ by 
$$ 
G_k(n) = \Big( \frac{1-i}{2} + \Big(\frac{-1}{n}\Big) \frac{1+i}{2} \Big) \sum_{a\pmod n} \Big(\frac{a}{n} \Big) e\Big(\frac{ak}{n}\Big). 
$$ 
In addition to $G_k$, it is helpful to define the closely related sum
$$ 
\tau_k(n) = \sum_{b \pmod n} \Big(\frac{b}{n}\Big) e\Big(\frac{kb}{n}\Big)= \Big(\frac{1+i}{2} + \Big(\frac{-1}{n}\Big) \frac{1-i}{2}\Big) G_k(n). 
$$ 
The Gauss type sum $G_k(n)$ has been calculated explicitly in Lemma 2.3 of \cite{So4} which we now quote. 

\begin{lemma} \label{Gauss}  If $m$ and $n$ are coprime odd integers then $G_k(mn)=G_k(m)G_k(n)$.  
If $p^{\alpha}$ is the largest power of $p$ dividing $k$ (setting $\alpha=\infty$ if $k=0$) then 
$$ 
G_k(p^{\beta}) = \begin{cases}
0 &\text{if }  \beta \le \alpha \text{ is odd},\\ 
\phi(p^{\beta}) &\text{if } \beta \le \alpha \text{ is even},\\
-p^{\alpha} &\text{if } \beta= \alpha+1 \text{ is even},\\ 
 (\frac{kp^{-\alpha}}{p}) p^{\alpha}\sqrt{p} &\text{if } \beta=\alpha+1 \text{ is odd},\\
 0 &\text{if } \beta \ge \alpha+2.
 \end{cases}
 $$ 
\end{lemma}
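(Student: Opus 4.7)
The plan is to work with the auxiliary sum $\tau_k(n)$ defined in the text, since $G_k(n)$ differs from $\tau_k(n)$ only by an explicit $8$th root of unity depending on $n \bmod 4$, and then convert back at the end. Concretely, I would first establish multiplicativity of $\tau_k$ (up to a Jacobi symbol factor), and then evaluate $\tau_k(p^\beta)$ for prime powers by a case split on the parity of $\beta$.

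For multiplicativity, let $m,n$ be coprime odd integers. By the Chinese Remainder Theorem, each $c \bmod mn$ decomposes uniquely as $c \equiv c_1 n \overline{n}_m + c_2 m \overline{m}_n \pmod{mn}$ with $c_1 \bmod m$ and $c_2 \bmod n$, where $\overline{n}_m$ denotes the inverse of $n$ modulo $m$. Substituting this in and using the multiplicativity of the Jacobi symbol $(c/mn) = (c_1/m)(c_2/n)$ splits the sum as $\tau_{k\overline{n}_m}(m)\,\tau_{k\overline{m}_n}(n)$. Since multiplying the argument $k$ of $\tau_k$ by a unit mod $m$ introduces only a Jacobi symbol factor, and the two Jacobi factors combine correctly to give $G_k(mn) = G_k(m) G_k(n)$ after inserting the conversion constants between $G$ and $\tau$, this yields the multiplicativity claim.

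The prime-power case is the computational heart. For $\beta$ even, $(b/p^\beta) = (b/p)^\beta$ equals the indicator $[p \nmid b]$, so $\tau_k(p^\beta) = \sum_{b \bmod p^\beta} e(kb/p^\beta) - \sum_{b' \bmod p^{\beta-1}} e(kb'/p^{\beta-1})$ after extracting the $p\mid b$ terms. Both of these are standard complete exponential sums which equal $p^\beta$ or $p^{\beta-1}$ when the modulus divides $k$ and vanish otherwise; sorting by whether $p^\beta\mid k$, $p^{\beta-1}\| k$, or lower powers yields exactly the three stated values $\phi(p^\beta)$, $-p^\alpha$, and $0$. For $\beta$ odd, $(b/p^\beta) = (b/p)$ depends only on $b \bmod p$, so writing $b = a + pm$ with $a$ running over $\{0,\dots,p-1\}$ and $m$ over $\mathbb{Z}/p^{\beta-1}\mathbb{Z}$ factors the sum as $\big(\sum_a (a/p) e(ka/p^\beta)\big)\big(\sum_m e(km/p^{\beta-1})\big)$. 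The $m$-sum forces $p^{\beta-1}\mid k$, i.e. $\beta\le\alpha+1$; when $\beta\le\alpha$ the remaining $a$-sum is $\sum_a(a/p)=0$, and when $\beta=\alpha+1$ writing $k=p^\alpha k'$ reduces the $a$-sum to $(k'/p)$ times the classical quadratic Gauss sum $g(p) = \sum_a(a/p)e(a/p)$.

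The final step is to use $g(p) = \sqrt{p}$ or $i\sqrt{p}$ according as $p\equiv 1$ or $3 \pmod 4$, and to track the multiplier from $\tau_k$ to $G_k$, which equals $1$ when $p^\beta\equiv 1\pmod 4$ and $-i$ when $p^\beta \equiv 3 \pmod 4$. For $\beta$ even one has $p^\beta\equiv 1\pmod 4$, so $G_k=\tau_k$ immediately. For $\beta$ odd with $\beta=\alpha+1$, the sign of $p^\beta \bmod 4$ matches the imaginary/real nature of $g(p)$ in precisely the way needed: in both the $p\equiv 1$ and $p\equiv 3\pmod 4$ cases, the factors of $i$ cancel and leave the clean answer $(k'/p)\,p^\alpha\sqrt{p}$. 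The main obstacle is bookkeeping — keeping the two $\pmod 4$ cases for $p$, the two parities of $\beta$, and the $G_k$ versus $\tau_k$ multiplier straight simultaneously — but none of the individual steps is substantive beyond standard Gauss sum manipulations.
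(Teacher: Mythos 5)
Your proof is correct. The paper does not prove Lemma~\ref{Gauss} itself; it quotes Lemma~2.3 of \cite{So4}, so there is no in-text argument to compare against. Your route via $\tau_k$ is the standard one for this result and all the pieces are sound: the CRT decomposition gives $\tau_k(mn)=\tau_{k\overline{n}_m}(m)\tau_{k\overline{m}_n}(n)=\left(\tfrac{n}{m}\right)\left(\tfrac{m}{n}\right)\tau_k(m)\tau_k(n)$, the two-parity case split over prime powers is exactly right, and the conversion factor $\epsilon_n$ (equal to $1$ or $-i$ according as $n\equiv 1$ or $3\bmod 4$) cancels the $i$'s from $g(p)$ in just the way you describe. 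The one step you wave through is the multiplicativity conversion from $\tau$ back to $G$: there you need the identity $\epsilon_{mn}\left(\tfrac{n}{m}\right)\left(\tfrac{m}{n}\right)=\epsilon_m\epsilon_n$, which is precisely quadratic reciprocity for the Jacobi symbol, $\left(\tfrac{n}{m}\right)\left(\tfrac{m}{n}\right)=(-1)^{\frac{m-1}{2}\cdot\frac{n-1}{2}}$, matched against the table of values of $\epsilon$ on residues mod $4$. That is worth stating explicitly, since it is where the nontrivial arithmetic input lives and it is the whole reason the normalizing prefactor in the definition of $G_k$ makes the sum genuinely multiplicative rather than multiplicative-up-to-signs. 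Everything else is bookkeeping, as you say.
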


\begin{lemma} \label{Poisson}  Let $r \pmod q$ be a given residue class, and let $n$ be an odd natural number coprime to 
$q$.   Let $F$ be a smooth compactly supported function.  Then 
$$ 
\sum_{d \equiv r \pmod q} \Big(\frac{d}{n}\Big) F(d)= \frac{1}{qn} \Big(\frac{q}{n}\Big) 
\sum_{k} {\widehat F}\Big(\frac{k}{nq}\Big) e\Big(\frac{kr\overline{n}}{q}\Big) \tau_k(n),  
$$ 
where ${\widehat F}$ denotes the Fourier transform.  
\end{lemma}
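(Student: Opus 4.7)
The plan is to combine the Chinese Remainder Theorem with classical Poisson summation on a single arithmetic progression. Since $(q,n)=1$, the joint conditions $d\equiv r\pmod q$ and $d\equiv b\pmod n$ pin down a unique residue $c_b$ modulo $qn$, and the Jacobi symbol $\big(\tfrac{d}{n}\big)$ depends only on $d\pmod n$. I would begin by inserting $1=\sum_{b\pmod n}\mathbf{1}_{d\equiv b\pmod n}$ to split the sum as
$$\sum_{d\equiv r\pmod q}\Big(\frac{d}{n}\Big)F(d)=\sum_{b\pmod n}\Big(\frac{b}{n}\Big)\sum_{m\in\mathbb{Z}}F(c_b+mqn),$$
which isolates the character on the level-$n$ side and leaves a pure progression sum in $m$.

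Next I would apply Poisson summation to the inner progression sum, obtaining
$$\sum_{m\in\mathbb{Z}}F(c_b+mqn)=\frac{1}{qn}\sum_{k\in\mathbb{Z}}\widehat{F}\Big(\frac{k}{qn}\Big)e\Big(\frac{kc_b}{qn}\Big).$$
Letting $\bar n$ denote the inverse of $n$ modulo $q$ and $\bar q$ the inverse of $q$ modulo $n$, CRT gives $c_b/(qn)\equiv r\bar n/q+b\bar q/n\pmod 1$, so the phase factors as $e(kc_b/qn)=e(kr\bar n/q)\,e(kb\bar q/n)$. After interchanging the order of summation, all $b$-dependence collapses into the single inner sum $\sum_{b\pmod n}\big(\tfrac{b}{n}\big)e(kb\bar q/n)$.

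Finally, I would make the substitution $b\mapsto bq\pmod n$, which is a bijection on $\mathbb{Z}/n\mathbb{Z}$ since $(q,n)=1$. Using multiplicativity $\big(\tfrac{bq}{n}\big)=\big(\tfrac{b}{n}\big)\big(\tfrac{q}{n}\big)$ together with $bq\cdot\bar q\equiv b\pmod n$, this converts the inner sum into $\big(\tfrac{q}{n}\big)\sum_{b\pmod n}\big(\tfrac{b}{n}\big)e(kb/n)=\big(\tfrac{q}{n}\big)\tau_k(n)$ by the definition of $\tau_k(n)$ recorded just above the lemma. Assembling the factors yields the stated identity.

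There is no genuine obstacle beyond careful bookkeeping; the only points that demand attention are verifying that the CRT splitting produces exactly $r\bar n\pmod q$ in the surviving phase (rather than some twist with the inverses on the wrong modulus) and that the Jacobi symbol manipulation cleanly extracts the $\big(\tfrac{q}{n}\big)$ factor. Both are standard steps in converting character sums in progressions into their Fourier duals.
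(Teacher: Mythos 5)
Your proof is correct and follows essentially the same route as the paper: splitting by residue class modulo $n$, CRT to combine the congruences, Poisson summation on the resulting progression, and then the change of variable $b\mapsto bq$ to pull out $\big(\tfrac{q}{n}\big)$ and recognize $\tau_k(n)$. The paper simply states the last equality directly; you have supplied the one-line substitution that justifies it.
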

\begin{proof}  The desired sum is 
$$ 
\sum_{b\pmod n} \Big(\frac{b}{n}\Big) \sum_{\substack{{d\equiv r\pmod q}\\ {d\equiv b\pmod n}}} F( {d} ). 
$$ 
Since $q$ and $n$ are coprime, the congruence conditions above may be expressed as $d\equiv bq\overline{q} +rn\overline{n} 
\pmod{qn}$ where $q\overline{q}\equiv 1\pmod n$ and $n\overline n \equiv 1\pmod q$.  Thus, using Poisson summation the inner sum over $d$ equals 
$$ 
\sum_{d\equiv bq\overline{q} +rn\overline{n} 
\pmod{qn}} F(d) = \frac{1}{qn} \sum_{k} {\widehat F}\Big( \frac{k}{nq} \Big) e\Big( \frac{kb\overline{q}}{n} + 
\frac{k r\overline{n}}{q}\Big). 
$$
Bringing back the sum over $b$ we conclude that the desired sum equals 
$$ 
\frac{1}{qn} \sum_{k} { \widehat F}\Big(\frac{k}{nq} \Big) e\Big(\frac{kr\overline{n}}{q} \Big) \sum_{b\pmod n} \Big(\frac{b}{n} \Big) e\Big(\frac{kb\overline{q}}{n}\Big)= \frac{1}{qn} \Big(\frac{q}{n}\Big) \sum_{k} {\widehat F}\Big(\frac{k}{nq} \Big) e\Big(\frac{kr\overline{n}}{q}\Big) \tau_{k}(n). 
$$ 
\end{proof}

\section{Proof of Proposition \ref{PropDirpoly}} 

\noindent Since $v$ is square-free and coprime to $N_0$, note that, (for $d$ coprime to $N_0$, and $d$ a multiple of $v$) 
\begin{equation} 
\label{S21} 
\sum_{\beta|(v,d/v)} \mu(\beta) \sum_{\substack{{(\alpha, vN_0)=1}\\ {\alpha^2 | d/v}}} \mu(\alpha) = \begin{cases} 
1 &\text{if  } d \text{ is a square-free multiple of } v\\ 
0 &\text{otherwise}. 
\end{cases}
\end{equation}  
Thus, writing $d=kv\beta\alpha^2$, we obtain 
\begin{equation} 
\label{7.2}
\sum_{\substack{d \in {\mathcal E}(\kappa,a)  \\ v | d}} 
\chi_d(n) \Phi \Big ( \frac{\kappa d}{X} \Big) 
= 
\sum_{\beta | v} \sum_{(\alpha, v N_0) = 1} \mu(\beta) \mu(\alpha)
\Big( \frac{v \beta \alpha^2}{n} \Big) \sum_{k \equiv a \overline{v \beta \alpha^2} \pmod{N_0}} \Big( \frac{k}{n}\Big) \Phi \Big(
\frac{\kappa k v \beta \alpha^2}{X} \Big).
\end{equation}  

Put $A=X^{\frac 12 -\epsilon}/(v\sqrt{n}).$  Estimating the sum over $k$ trivially, 
the terms in \eqref{7.2} with $\alpha >A$ contribute 
\begin{equation} \label{7.3} 
\ll \sum_{\beta |v} \sum_{\alpha >A} \frac{X}{v\beta \alpha^2} \ll \frac{Xv^{\epsilon}}{vA} \ll X^{\frac 12+\epsilon} \sqrt{n}. 
\end{equation} 

For the terms with $\alpha \le A$, we use the Poisson summation as stated in Lemma \ref{Poisson}, which gives  
\begin{equation} 
\label{7.4}
\sum_{k \equiv a \overline{v \beta \alpha^2} \pmod{N_0}}
\Big ( \frac{k}{n} \Big ) \Phi \Big ( \frac{\kappa k v \beta \alpha^2}{X}
\Big ) = \frac{X}{n N_0 v \beta \alpha^2} \Big ( \frac{\kappa N_0}{n} \Big )
\sum_{\ell} \widehat{\Phi} \Big ( \frac{X \ell}{n v \beta \alpha^2 N_0} \Big )
e \Big ( \frac{\ell a \overline{v \beta \alpha^2 n}}{N_0} \Big ) \tau_{\ell}(n).
\end{equation}
Since $X/(nv\beta \alpha^2 N_0) \ge X^{\epsilon}/N_0$ and ${\hat \Phi}(\xi) \ll_K |\xi|^{-K}$ for any $K>0$, we 
see that the terms with $\ell \neq 0$ above contribute (using the trivial bound $|\tau_{\ell}(n)|\le n$) 
an amount $\ll X^{-1}$ say.   If $n$ is not a perfect square, then the term $\ell =0$ above vanishes, and 
we conclude that the quantity in \eqref{7.4} is $\ll X^{-1}$.  Using this in \eqref{7.2} we see that the terms with $\alpha\le A$ 
contribute $\ll X^{-1+\epsilon} A$ in this case.   Thus the proposition follows in the case when $n$ is not a 
perfect square.

When $n$ is a perfect square, the term $\ell=0$ makes a contribution of ${\hat \Phi}(0)\frac{\phi(n)}{n} \frac{X}{vN_0\beta\alpha^2}$ in 
\eqref{7.4}.   Thus the terms with $\alpha \le A$ contribute to \eqref{7.2} 
\begin{align*}
&\widehat{\Phi}(0) \frac{X}{vN_0} \frac{\phi(n)}{n}
\sum_{\beta | v} \frac{\mu(\beta)}{\beta} \sum_{\substack{(\alpha, v n N_0) = 1 \\ \alpha \le A}} \frac{\mu(\alpha)}{
\alpha^2} + O(AX^{-1+\epsilon})\\
=& 
\widehat{\Phi}(0) \frac{X}{vN_0} \frac{\phi(n)}{n} \sum_{\beta | v} \frac{\mu(\beta)}{\beta} \sum_{\substack{(\alpha, v n N_0) = 1 }} \frac{\mu(\alpha)}{
\alpha^2} + O(X^{\frac 12+\epsilon} \sqrt{n}) .
\end{align*}
It is easy to check that the main term above equals  
$$
\widehat{\Phi}(0) \frac{X}{v N_0} \prod_{p \nmid N_0} \Big ( 1 - \frac{1}{p^2} \Big )
\prod_{p | nv} \Big ( 1 + \frac{1}{p} \Big )^{-1}, 
$$ 
and so the proposition follows in the case when $n$ is a perfect square, completing our proof.

\section{Proof of Proposition \ref{Prop3}} 
\label{SProp3}

\noindent Let ${\tilde a}(n)$ denote the completely multiplicative function defined on primes $p$ by ${\tilde a}(p) = a(p)$.  Let $w(n)$ 
be the multiplicative function defined by $w(p^{\alpha}) = \alpha!$ for prime powers $p^{\alpha}$.   For $1\le j\le R$ we may write 
\begin{equation} 
\label{5.1} 
{\mathcal B}_j(d) = \sum_{n_j} \frac{{\tilde a}(n_j)}{\sqrt{n_j}} \frac{k^{\Omega(n_j)}}{w(n_j)}  b_j(n_j) \chi_d(n_j),
\end{equation} 
where $\Omega(n_j)$ denotes the number of prime factors of $n_j$ (counted with multiplicity), and $b_j(n_j)=1$ if 
$n_j$ is composed of at most $\ell_j$ primes, all from the interval $P_j$, and $b_j(n_j)$ is zero otherwise.    In particular, note that 
$b_j(n_j)=0$ unless $n_j< (X^{1/\ell_j^2})^{\ell_j}=X^{1/\ell_j}$, so that ${\mathcal B}_j(d)$ is a short Dirichlet polynomial.  
Write also 
\begin{equation} 
\label{5.2}
\frac{1}{\ell_j!} {\mathcal P}_j(d)^{\ell_j} = \sum_{n_j} \frac{{\tilde a}(n_j)}{w(n_j)\sqrt{n_j}} p_j(n_j) \chi_d(n_j), 
\end{equation} 
where $p_j(n_j)=1$ if $n_j$ is composed of exactly $\ell_j$ primes (counted with multiplicity) all from the interval $P_j$, 
and $p_j(n_j)=0$ otherwise.  This too is a short Dirichlet polynomial supported only on $n_j \le X^{1/\ell_j}$.  
Thus note that $\prod_{j=1}^{R} {\mathcal B}_j(d)$ and $\prod_{j=1}^{r} {\mathcal B}_j(d) {\mathcal P}_{r+1}^{\ell_{r+1}}$ 
are all short Dirichlet polynomials, of length at most $X^{1/\ell_1+ \ldots +1/\ell_R} < X^{1/1000}$.   

Let $0\le r\le R-1$ and consider one of the terms $\prod_{j=1}^{r} {\mathcal B}_j(d) {\mathcal P}_{r+1}^{\ell_{r+1}}$ that arises in our proposition.   We expand this term using \eqref{5.1} and \eqref{5.2}, and appeal to Proposition \ref{PropDirpoly} (with $v=1$ there).   Since the Dirichlet polynomials ${\mathcal B}_j$ and ${\mathcal P_j}^{\ell_j}$ are short, the error terms arising from Proposition \ref{PropDirpoly} 
contribute a negligible amount.  We are thus left with the main term, which is 
\begin{align*}
 \frac{X}{N_0} {\check \Phi}(0) \prod_{p\nmid N_0} \Big(1-\frac{1}{p^2} \Big) 
 &\prod_{j=1}^{r} \Big( \sum_{n_j =\square} \frac{{\tilde a}(n_j)}{\sqrt{n_j}} \frac{k^{\Omega(n_j)}}{w(n_j)} \prod_{p|n_j} \Big(1+\frac 1p\Big)^{-1} b_j(n_j)\Big)\\
 &\times  \Big( \ell_{r+1}! \sum_{n_{r+1}=\square} \frac{{\tilde a}(n_{r+1})}{w(n_{r+1}) \sqrt{n_{r+1}}} 
\prod_{p|n_{r+1}} \Big(1+\frac 1p\Big)^{-1} p_{r+1}(n_{r+1}) \Big). 
\end{align*}

Now, since all the terms involved are non-negative,  
$$ 
\sum_{n_j=\square} \frac{{\tilde a}(n_j)}{\sqrt{n_j}} \frac{k^{\Omega(n_j)}}{w(n_j)} \prod_{p|n_j} \Big(1+\frac{1}{p}\Big)^{-1} b_j(n_j) 
\le \prod_{p\in P_j} \Big( \sum_{t=0}^{\infty} \frac{a(p)^{2t}}{p^t} \frac{k^{2t}}{(2t)!} \Big) 
\ll \exp\Big( \frac{k^2}{2} \sum_{p\in P_j} \frac{a(p)^2}{p}\Big). 
$$ 
Similarly we find that 
$$ 
\sum_{{n_{r+1}=\square} }\frac{{\tilde a}(n_{r+1})}{w(n_{r+1}) \sqrt{n_{r+1}}} 
\prod_{p|n_{r+1}} \Big(1+\frac 1p \Big)^{-1} p_{r+1}(n_{r+1}) \le \frac{1}{(\ell_{r+1}/2)!} \Big( \sum_{p\in P_{r+1}} \frac{a(p)^2}{p} \Big)^{\ell_{r+1}/2}.
$$ 

Putting all these observations together, we find that 
\begin{align*}
\sum_{d \in {\mathcal E}(\kappa,a)} \prod_{j=1}^{r} {\mathcal B}_j(d) \Big( \frac{e^2 {\mathcal P}_{r+1}}{\ell_{r+1}}\Big)^{\ell_{r+1}} \Phi\Big(\frac{\kappa d}X\Big)
&\ll X \exp\Big(\frac{k^2}{2} \sum_{j=1}^{r} \sum_{p\in P_j} \frac{a(p)^2}{p} \Big)\\
&\times \Big( \Big(\frac{e^2}{\ell_{r+1}}\Big)^{\ell_{r+1}} \frac{\ell_{r+1}!}{(\ell_{r+1}/2)!} 
\Big( \sum_{p\in P_{r+1}} \frac{a(p)^2}{p} \Big)^{\ell_{r+1}/2}\Big). 
\end{align*}
Using Stirling's formula, Lemma \ref{RS}, and that $\ell_{r+1} \ge 10^4$ we find that the above 
is 
$$ 
\ll X e^{-\ell_{r+1}} (\log X)^{\frac{k^2}{2}}. 
$$ 
Arguing in the same way, we obtain 
$$ 
\sum_{d\in {\mathcal E}(\kappa, a)} \prod_{j=1}^{R} {\mathcal B}_j(d ) \Phi\Big(\frac{\kappa d}{X}\Big)  \ll X (\log x)^{\frac{k^2}{2}}. 
$$ 
Summing all these bounds, the proposition follows.

\section{Proof of Proposition \ref{Prop4}} 
\label{SProp4}

\noindent Let ${\tilde a}(n)$, $w(n)$, $b_j(n)$ and $p_j(n)$ be defined as in Section \ref{SProp3}.  Then 
for $1\le j\le R$ we may write
$$ 
{\mathcal A}_j(d) = \sum_{n_j} \frac{{\tilde a}(n_j)}{\sqrt{n_j}} \frac{(k-1)^{\Omega(n_j)}}{w(n_j)} b_j(n_j) \chi_d(n_j).
$$ 
We shall also use the expression \eqref{5.2}.  Thus, as in Section \ref{SProp3},  $\prod_{j=1}^{R} {\mathcal A}_j(d)$ and 
$\prod_{j=1}^r {\mathcal A}_j(d) {\mathcal P}_{{r+1}}^{\ell_{r+1}}$ are short Dirichlet polynomials of 
length at most $X^{1/1000}$.

Let $0\le r\le R-1$ and consider one of the terms $\prod_{j=1}^{r} {\mathcal A}_j(d) {\mathcal P}_{r+1}^{\ell_{r+1}}$ that 
arises in our proposition.   We expand this term into its Dirichlet series, and appeal to Proposition 2 (with $v=1$ there).  
The error terms are negligible and we are left once again with the main term, which is 
\begin{align}
\label{6.1}
{C(a,E){\check \Phi}(0)} X &\prod_{j=1}^{r} \Big( \sum_{n_j } \frac{{\tilde a}(n_j)}{\sqrt{n_j}} \frac{a(n_{j1})}{\sqrt{n_{j1}}} 
\frac{(k-1)^{\Omega(n_j)}}{w(n_j) } b_j(n_j) g(n_{j}) \Big) \nonumber \\
&\times \Big( \ell_{r+1}! \sum_{n_{r+1}} \frac{{\tilde a}(n_{r+1}) a(n_{(r+1)1})}{\sqrt{n_{r+1}n_{(r+1)1}}} \frac{g(n_{r+1}) p_{r+1}(n_{r+1})}{w(n_{r+1})}\Big).
\end{align}
Here $C(a,E)$ is a constant, depending only on $a$ and $E$; we write $n_j=n_{j1}n_{j2}^2$ with $n_{j1}$ square-free; 
and $g$ is the multiplicative function of Proposition 2.   

Consider one of the terms with $1\le j\le r$ in \eqref{6.1} above.   The factor $b_{j}(n_j)$ constrains $n_j$ to have 
all prime factors in $P_j$, and also restricts $\Omega(n_j)$ to be at most $\ell_j$.  If we ignore the restriction on $\Omega(n_j)$, 
the result would be given by the Euler product  
\begin{equation} 
\label{6.2} 
\prod_{p\in P_j} \Big( \sum_{j=0}^{\infty} \frac{a(p)^{2j}}{p^j} \frac{(k-1)^{2j}}{(2j)!} g(p^{2j}) + \sum_{j=0}^{\infty} \frac{a(p)^{2j+2}}{p^{j+1}} 
\frac{(k-1)^{2j+1}}{(2j+1)!} g(p^{2j+1})\Big);  
\end{equation} 
the first sum above counts terms where $n_{j}$ is divisible by an even power of $p$ (so that $n_{j1}$ is not a multiple of $p$), 
and the second sum counts those terms with an odd power of $p$ dividing $n_{j}$ (so that $n_{j1}$ is divisible by $p$).   
The error in replacing the quantity in \eqref{6.1} by the Euler product of \eqref{6.2} comes from terms with $\Omega(n_j)>\ell_j$.  We estimate this error using ``Rankin's trick."   Since $2^{\Omega(n_j)-\ell_j}\ge 1$ if $\Omega(n_j) > \ell_j$,  the error in passing from \eqref{6.1} to \eqref{6.2} is at most
$$ 
\sum_{n_j} \frac{|{\tilde a}(n_j)|}{\sqrt{n_j}} \frac{|a(n_{j1})|}{\sqrt{n_{j1}}} 
\frac{|k-1|^{\Omega(n_j)}}{w(n_j) } 2^{\Omega(n_j)-\ell_j} g(n_{j}), 
$$
which is 
\begin{align*}
&\le 2^{-\ell_j} 
\prod_{p\in P_j} \Big( \sum_{j=0}^{\infty} \frac{a(p)^{2j}}{p^j} \frac{(k-1)^{2j}2^{2j}}{(2j)!} g(p^{2j}) 
+ \sum_{j=0}^{\infty} \frac{a(p)^{2j+2}}{p^{j+1}} \frac{|k-1|^{2j+1} 2^{2j+1}}{(2j+1)!} g(p^{2j+1})\Big) \\
&\ll 2^{-\ell_j} 
\exp\Big( 4\sum_{p\in P_j} \frac{a(p)^2}{p} \Big). 
\end{align*} 
Since the quantity in \eqref{6.2} is $\gg \exp(\frac{k^2-1}{2} \sum_{p\in P_j} \frac{a(p)^2}{p})$ and,
using Lemma \ref{RS} and the definition of $\ell_j$, we may check that $\ell_j \ge 10 \sum_{p\in P_j} a(p)^2/p$, we conclude that 
\begin{align}
\label{6.3}
\sum_{n_j } \frac{{\tilde a}(n_j)}{\sqrt{n_j}} \frac{a(n_{j1})}{\sqrt{n_{j1}}} 
&\frac{(k-1)^{\Omega(n_j)}}{w(n_j) } b_j(n_j) g(n_{j}) 
= (1+O(2^{-\ell_j/2})) \nonumber\\ 
&\times \prod_{p\in P_j} \Big( \sum_{j=0}^{\infty} \frac{a(p)^{2j}}{p^j} \frac{(k-1)^{2j}}{(2j)!} g(p^{2j})+ \sum_{j=0}^{\infty} \frac{a(p)^{2j+2}}{p^{j+1}} 
\frac{(k-1)^{2j+1}}{(2j+1)!} g(p^{2j+1})\Big). 
\end{align}

Now consider the contribution of the $n_{r+1}$ terms in \eqref{6.1}.  Note that the 
terms here satisfy $\Omega(n_{r+1}) =\ell_{r+1}$, and we estimate these using Rankin's method again.   Thus, the 
contribution of the $n_{r+1}$ terms is 
$$ 
\le \ell_{r+1}! 10^{-\ell_{r+1}} \prod_{p\in P_{r+1}} 
\Big( \sum_{j=0}^{\infty} \sum_{j=0}^{\infty}\frac{a(p)^{2j} 10^{2j}}{p^j (2j)!} g(p^{2j})+ \sum_{j=1}^{\infty} 
\frac{a(p)^{2j+2} 10^{2j+1} }{p^{j+1} (2j+1)!} g(p^{2j+1})\Big). 
$$ 
Since $\ell_{r+1}! \le \ell_{r+1} (\ell_{r+1}/e)^{\ell_{r+1}}$, the above is, using Lemma \ref{RS} and the definition of $\ell_{j}$,
$$ 
\ll \ell_{r+1} \Big(\frac{\ell_{r+1}}{10e}\Big)^{\ell_{r+1}} \exp\Big(60 \sum_{p\in P_{r+1}} \frac{a(p)^2}{p}\Big)
 \ll \ell_{r+1} \Big(\frac{\ell_{r+1}}{10e}\Big)^{\ell_{r+1}} \exp(\tfrac 35 \ell_{r+1}). 
$$ 

Using the above estimate together with \eqref{6.1} and \eqref{6.3}, we conclude that 
\begin{align*}
\sum_{d \in {\mathcal E}(\kappa,a)} &L(\tfrac 12,E_d)  \prod_{j=1}^{r} {\mathcal A}_j(d) 
\Big(\frac{e^2 {\mathcal P}_{r+1}}{\ell_{r+1}}\Big)^{\ell_{r+1}} \Phi\Big(\frac{\kappa d}{X}\Big) 
\\
&\ll {X} e^{-\ell_{r+1}/2}  \prod_{p\in \cup_{j=1}^{r} P_j} 
\Big( \sum_{j=0}^{\infty} \frac{a(p)^{2j}}{p^j} \frac{(k-1)^{2j}}{(2j)!}g(p^{2j}) + \sum_{j=0}^{\infty} \frac{a(p)^{2j+2}}{p^{j+1}} 
\frac{(k-1)^{2j+1}}{(2j+1)!} g(p^{2j+1})\Big).
\end{align*}
Using Lemma \ref{RS}, we check that for all $0\le r\le R-1$ the above is 
$$ 
\ll X \exp(-\tfrac{\ell_{r+1}}{3}) (\log X)^{\frac{k^2-1}{2}}. 
$$ 

A similar argument shows that 
$$ 
\sum_{d \in {\mathcal E}(\kappa,a)} {L(\tfrac 12,E_d)} \prod_{j=1}^{R} {\mathcal A}_j(d) \Phi\Big(\frac{\kappa d}{X}\Big) 
\ll X(\log X)^{\frac{k^2-1}{2}},
$$ 
and summing all these bounds, we obtain our proposition.

 \section{Proof of Proposition \ref{Prop1}}

\noindent The proof of Proposition \ref{Prop1} follows the general plan of the arguments in \cite{So4} and \cite{SoY} (see also 
\cite{Iw});  
therefore, in some places below we have been brief, and suppressed some details.     
Using Lemma \ref{AFE} in \eqref{S} we obtain 
\begin{equation} 
\label{S2} 
{\mathcal S}(X;u,v) = 2\sum_{\substack{{n=1}\\ {(n,N_0)=1}}}^{\infty} \frac{a(n)}{\sqrt{n}} \sum_{\substack{{d\in {\mathcal E}(\kappa,a)}\\ {v|d}}} \chi_d(nu)W\Big(\frac{n}{\kappa d}\Big) \Phi\Big(\frac{\kappa d}{X}\Big).
\end{equation} 
The inner sum over $d$ in \eqref{S2} runs over multiples of $v$ that are square-free and lying in the progression $a\pmod{N_0}$.  
Thus, using \eqref{S21}, and writing $d=kv\beta \alpha^2$, we see that the sum over $d$ in \eqref{S2} equals 
\begin{equation} 
\label{S3} 
\sum_{\beta|v} \sum_{(\alpha, vN_0)=1} \mu(\beta)\mu(\alpha)\Big(\frac{v\beta \alpha^2}{nu}\Big) \sum_{k \equiv a\overline{v\beta\alpha^2} \pmod{N_0}} 
\Big(\frac{k}{nu}\Big) W\Big(\frac{n}{\kappa k v\beta\alpha^2}\Big) \Phi\Big(\frac{\kappa k v\beta\alpha^2}{X}\Big). 
\end{equation} 

Let $Y>1$ be a parameter to be chosen later.  We distinguish the cases $\beta\alpha^2 >Y$ and $\beta \alpha^2 \le Y$.  
First we bound the contribution of the terms with $\beta \alpha^2 >Y$; the main term will arise from the case $\beta \alpha^2\le Y$.  

\subsection{The terms with $\alpha^2 \beta > Y$}

Consider the contribution of the terms to \eqref{S3} with $\alpha^2 \beta>Y$ and sum that 
over $n$ (as in \eqref{S2}).   Thus the total contribution of such terms to \eqref{S2} is  bounded by 
\begin{equation} 
\label{10.0} 
\sum_{\beta | v} \sum_{\substack{(\alpha, v N_0) = 1 \\
\alpha^2 \beta > Y}}
\sum_{k v \beta \alpha^2 \equiv a \pmod{N_0}} 
\Phi \Big( \frac{\kappa k v \beta \alpha^2}{X} \Big ) \Big|
 \sum_{(n,N_0) = 1} \frac{a(n)}{\sqrt{n}}
\Big ( \frac{k v \beta \alpha^2}{n} \Big ) W \Big ( \frac{n}{
\kappa k v \beta \alpha^2} \Big )\Big|.
\end{equation} 
Now using the definition of $W$, the sum over $n$ above can be rewritten as 
\begin{equation} 
\label{10.1}
\frac{1}{2 \pi i} \int_{(\epsilon)} L_a(s+\tfrac 12) \Gamma(s) \Big(\frac{\sqrt{N}\kappa kv\beta \alpha^2}{2\pi }\Big)^s 
\sum_{(n,N_0)=1} \frac{a(n)}{n^{\frac 12+s}} \Big(\frac{kv\beta \alpha^2}{n} \Big) ds. 
\end{equation} 
Write the discriminant $4 k v \beta \alpha^2$ as  $k_1 k_2^2$
with $k_1$ a fundamental discriminant.  Note that $\alpha \beta$ must divide $k_2$.   Since 
$n$ is odd, $\chi_{kv\beta\alpha^2}(n)=\chi_{k_1k_2^2}(n)$, and so the 
sum over $n$ above may be expressed as $L(\tfrac 12+s,E_{k_1})$ up to Euler factors coming from 
primes dividing $k_2$ and $N_0$ (and these factors are at most $X^{\epsilon}$ in size).   Thus the 
quantity in \eqref{10.1} may be bounded by 
$$ 
\ll X^{\epsilon} \int_{(\epsilon)} |\Gamma(s) L(\tfrac 12+s, E_{k_1})| |ds|. 
$$ 
Using this in \eqref{10.0} we obtain a bound 
$$ 
\ll X^{\epsilon} 
\sum_{\beta|v} \sum_{\alpha^2 \beta>Y} \sum_{\alpha\beta |k_2} \ \  \sumflat_{|k_1|\le X^{1+\epsilon}/k_2^2} 
\int_{(\epsilon)} |\Gamma(s) L(\tfrac 12+s,E_{k_1})| |ds|, 
$$ 
where the $\flat$ indicates a sum over fundamental discriminants.  
  By an application of Heath-Brown's large sieve for quadratic characters 
  (see \cite{HB}, and also Corollary 2.5 of \cite{SoY}) this is 
  \begin{equation} 
  \label{10.2} 
  \ll X^{\epsilon} \sum_{\beta |v} \sum_{\alpha^2 \beta >Y} \sum_{\substack{\alpha \beta |k_2 \\ k_2 \le X^{\frac 12+\epsilon}} } 
  \frac{X}{k_2^2}  \ll X^{1+\epsilon} \sum_{\beta|v} \sum_{\alpha^2 \beta >Y} \frac{1}{\alpha^2 \beta^2}  
  \ll \frac{X^{1+\epsilon}}{\sqrt{Y}}. 
 \end{equation}

\subsection{The terms with $\beta \alpha^2\le Y$: Analysis of the main term} 

Now we turn to the terms in \eqref{S3} with $\beta \alpha^2 \le Y$.  
Put 
$$
F(\xi;x,y) = W\Big( \frac{y}{\kappa \xi}\Big) \Phi\Big(\frac{\kappa \xi}{x}\Big). 
$$ 
Applying Poisson summation (Lemma \ref{Poisson}) to the sum over $k$ in \eqref{S3} we get 
\begin{equation}
\label{S4}
\frac{1}{N_0 nu} \Big(\frac{N_0}{nu}\Big) \sum_{\ell} {\widehat F}\Big(\frac{\ell}{N_0nu};\frac{X}{v\beta\alpha^2},\frac{n}{v\beta\alpha^2}\Big) \tau_{\ell}(nu) e\Big(\frac{\ell a\overline{v\beta\alpha^2 nu}}{N_0}\Big). 
\end{equation} 

The main term arises from $\ell=0$ in \eqref{S4}, which we now analyze.  Note that $\tau_0(nu)=0$ unless $nu$ is a perfect 
square when it equals $\phi(nu)$.   Thus the main term for ${\mathcal S}(X;u,v)$ is 
$$ 
\frac{2}{N_0} 
\sum_{\beta|v} \sum_{\substack{(\alpha,uvN_0)=1\\ \beta\alpha^2 \le Y}} \mu(\beta)\mu(\alpha) \sum_{\substack{{(n,\alpha\beta N_0) =1} \\ {nu=\square}}}\frac{\phi(nu)}{nu} \frac{a(n)}{\sqrt{n}} 
{\widehat F}\Big(0; \frac{X}{v\beta\alpha^2}, \frac{n}{v\beta \alpha^2}\Big).
$$ 
We add back the terms with $\alpha^2 \beta >Y$ above.  Since 
$$
{\hat F}\Big(0;\frac{X}{v\beta \alpha^2}, \frac{n}{v\beta\alpha^2}\Big) =\frac{X}{v\beta \alpha^2}  \int_0^{\infty} \Phi(\xi) 
W\Big(\frac{n}{X\xi}\Big) d\xi
 \ll \frac{X}{v\beta \alpha^2} e^{-n/(X\sqrt{N})}, 
 $$
 the sum over $n$ is at most $X^{1+\epsilon}/(\sqrt{u_1} v\beta \alpha^2)$ and adding this over 
 the terms with $\beta\alpha^2 >Y$ contributes an error of $O(X^{1+\epsilon}/(\sqrt{u_1Y}v))$.   Note that this error is 
 smaller than the error term in \eqref{10.2}.  

Next, using the definition of $W$, we obtain that for any $c>0$ 
\begin{align*}
{\hat F}\Big(0; \frac{X}{v\beta\alpha^2}, \frac{n}{v\beta \alpha^2}\Big) 
& = 
\frac{X}{v\beta\alpha^2} \frac{1}{2\pi i}  \int_{(c)} {\check \Phi}(s) L_a(s+\tfrac 12) \Gamma(s) \Big(\frac{\sqrt{N}X}{2\pi n}\Big)^s ds.
\end{align*} 
Thus the main term in ${\mathcal S}(X;u,v)$ (after extending the sums over $\alpha$ and $\beta$) equals 
\begin{equation} \label{M1}
\frac{2X}{vN_0} \frac{1}{2\pi i} \int_{(c)} {\check \Phi}(s) L_a(s+\tfrac 12) \Gamma(s) \Big(\frac{\sqrt{N}X}{2\pi}\Big)^s 
\sum_{\beta|v} \sum_{(\alpha,uvN_0)=1} \frac{\mu(\beta)\mu(\alpha)}{\beta\alpha^2} \sum_{\substack{{(n,\alpha\beta N_0)=1}\\ {nu=\square}}} \frac{a(n)}{n^{s+\frac 12}} \frac{\phi(nu)}{nu} ds.
\end{equation} 
A little calculation shows that, for a given $n$ coprime to $N_0$, 
$$ 
\sum_{\substack{{\beta|v} \\ {(\beta,n)=1}}} \frac{\mu(\beta)}{\beta} \sum_{(\alpha,uvnN_0)=1} \frac{\mu(\alpha)}{\alpha^2} 
\frac{\phi(nu)}{nu} = \prod_{p\nmid N_0} \Big(1-\frac 1{p^2}\Big) 
\prod_{p| uvn} \Big(1+\frac 1p\Big)^{-1}. 
$$
Thus 
$$ 
\sum_{\beta|v} \sum_{(\alpha,uvN_0)=1}  \frac{\mu(\beta)\mu(\alpha)}{\beta\alpha^2} \sum_{\substack{{(n,\alpha\beta N_0)=1}\\ {nu=\square}}} \frac{a(n)}{n^{s+\frac 12}} \frac{\phi(nu)}{nu}  = 
 \prod_{p\nmid N_0} \Big(1-\frac{1}{p^2}\Big) \sum_{\substack{{(n,N_0)=1}\\ {nu=\square} } } 
 \frac{a(n)}{n^{s+\frac 12} } \prod_{p|uvn} \Big(1+\frac 1p\Big)^{-1}. 
 $$ 
 If $u=u_1u_2^2$ with $u_1$ square free then the condition that $nu$ is a square is the same as writing $n=u_1m^2$.  Thus the RHS above may be expressed as 
\begin{equation} 
\label{G1} 
\frac{a(u_1)}{u_1^{\frac 12+s}} L(1+2s, \text{sym}^2 E) {\mathcal G}(1+2s;u,v)
 \end{equation} 
 where ${\mathcal G}(1+2s;u,v) =\prod_p {\mathcal G}_p(1+2s;u,v)$ is an Euler product defined as follows: 
 If $p|N_0$ the Euler factor ${\mathcal G}_p$ is the inverse of the corresponding Euler factor for $L(1+2s,\text{sym}^2 E)$.  
 If $p|u_1$ then ${\mathcal G}_p(1+2s;u,v) = (1-1/p)(1-1/p^{1+2s})$.  If $p|uv$ but $p\nmid u_1$, then ${\mathcal G}_p(1+2s;u,v) = (1-1/p) (1-1/p^{2+4s})$.  Finally, if $p\nmid uvN_0$, then 
 $$ 
{\mathcal G}_p(1+2s;u,v)= \Big(1-\frac 1p \Big) \Big(1- \frac{1}{p^{1+2s}}\Big) \Big(1+ \frac{1}{p} \Big(1-\frac{\alpha_p^2}{p^{2s+1}}\Big) \Big(1-\frac{\beta_p^2}{p^{2s+1}}\Big)+ \frac{1}{p^{1+2s}} \Big),
$$ 
where we wrote $a(p)=\alpha_p+\beta_p$ with $\alpha_p\beta_p=1$.  It follows that ${\mathcal G}(1+2s;u,v)$ 
  admits an analytic continuation to the region Re$(s) \ge -\frac 14+\epsilon$ and is bounded there by $(N_0uv)^{\epsilon}$.

Using the above remarks in \eqref{M1}, the integrand there is analytic in Re$(s)>-\frac 14$ (except for a simple pole at $s=0$) and therefore, by moving the line of integration to Re$(s)= -\frac 14+ \epsilon$  we obtain that our main term is 
\begin{equation} 
\label{G2}
\frac{2Xa(u_1)}{vu_1^{\frac 12}N_0} {\check \Phi}(0) L_a(\tfrac 12) L(1,\text{sym}^2 E)  
{\mathcal G}(1;u,v) + O(X^{\frac 34+\epsilon})+O\Big(\frac{X^{1+\epsilon}}{\sqrt{Y}}\Big). 
\end{equation}
This is the main term in our Theorem, and the decomposition of ${\mathcal G}(1;u,v)$ as a 
constant times $g(u) h(v)$ for appropriate multiplicative functions $g$ and $h$ follows from 
our remarks on the Euler factors of ${\mathcal G}$.  

\subsection{The terms with $\beta \alpha^2 \le Y$: Estimating the remainder terms}

Recall that the Fourier transform ${\hat F}(\lambda; x,y)$ (where $\lambda$, $x$ and $y$ are real numbers, with $x$ and $y$ positive) is given by 
$$ 
{\widehat F}(\lambda;x,y) = x \int_0^{\infty} \Phi(\xi) W\Big(\frac{y}{x\xi} \Big) e(-\kappa \lambda x\xi) d\xi.
$$  
Since $W(t)$ and its derivatives decrease  rapidly as $t \to \infty$, we obtain that $|{\hat F}(\lambda;x,y)| 
\ll_A x (x/y)^A$ for any integer $A\ge 0$.  Further, integrating by parts many times, we also find that 
$|{\hat F}(\lambda;x,y)| \ll_A x (|\lambda|y)^{-A}$.   Thus we have 
\begin{equation} 
\label{10.3.1} 
|\widehat{F}(\lambda; x,y)| \ll_A x\min \Big( \Big(\frac{x}{y}\Big)^A, \frac{1}{(|\lambda| y)^A} \Big). 
\end{equation} 
 
 For $\alpha^2 \beta \le Y$ we must bound the contribution of the terms with $\ell\neq 0$ in \eqref{S4} 
 to the quantity in \eqref{S2}.  This is 
 \begin{equation} 
 \label{10.3.2}
 \ll \frac{1}{N_0u}  \sum_{\beta|v} \sum_{\substack{(\alpha,vN_0)=1\\ \alpha^2 \beta\le Y}} \Big|\sum_{\ell \neq 0} 
 \sum_{(n,N_0)=1} \frac{a(n)}{n^{\frac 32}} \Big(\frac{v\beta\alpha^2}{n}\Big) {\widehat F}\Big(\frac{\ell}{N_0nu};\frac{X}{v\beta \alpha^2}, 
 \frac{n}{v\beta \alpha^2} \Big) \tau_{\ell}(nu) e\Big(\frac{\ell a\overline{v\beta\alpha^2 nu}}{N_0}\Big)\Big|.
 \end{equation} 
 
 First we show that the terms $|\ell| > N_0 uvY X^{\epsilon}$ make a negligible contribution above.  Using \eqref{10.3.1} 
 we see that 
 $$ 
 \Big| {\widehat F}\Big(\frac{\ell}{N_0nu};\frac{X}{v\beta\alpha^2}, \frac{n}{v\beta\alpha^2}\Big) \Big| 
 \ll_A \frac{X}{v\beta \alpha^2} \Big(\frac{Nuv\beta \alpha^2}{|\ell|}\Big)^A \Big(\frac{X}{n}\Big)^2 \ll 
 \frac{1}{X \ell^2 n^2}, 
 $$ 
 by choosing $A$ appropriately large.  Using this in \eqref{10.3.2}, we deduce that the contribution of the terms 
 with $|\ell| > N_0 uvY X^{\epsilon}$ is $\ll X^{-1}$, which is indeed negligible.

 Now suppose $1\le |\ell |\le N_0 uvY X^{\epsilon}$, and consider the sum over $n$ in \eqref{10.3.2}.   We 
 remove the $e(\ell a\overline{v\beta\alpha^2nu}/N_0)$ term by introducing Dirichlet characters $\psi \pmod {N_0}$: 
 $$ 
 e\Big(\frac{\ell a\overline{v\beta \alpha^2nu}}{N_0} \Big) = 
 \sum_{\psi\pmod{N_0}} \psi(n) \Big(\frac{1}{\phi(N_0)} \sum_{b\pmod {N_0}} \overline{\psi(b)} e\Big(\frac{\ell a\overline{v\beta\alpha^2bu}}{N_0}\Big) \Big).   
 $$ 
 Since the sum over $b$ above is trivially bounded by $\phi(N_0)$, we are reduced to the problem of estimating 
 \begin{equation} 
 \label{10.3.7} 
 \sum_{\psi \pmod {N_0}} \Big| \sum_{(n,N_0)=1} \frac{a(n)}{n^{\frac 32}} \Big(\frac{v\beta\alpha^2}{n}\Big) 
 \psi(n) \tau_{\ell}(nu) {\widehat F}\Big( \frac{\ell }{N_0nu};\frac{X}{v\beta \alpha^2}, \frac{n}{v\beta \alpha^2}\Big)\Big|. 
 \end{equation} 
 
 Now we pass to Mellin transforms in order to handle the sum over $n$ above.   For a complex number $s$ with Re$(s)>0$, put  
 \begin{equation} 
 \label{10.3.8}
 {\widetilde F}(s;\ell,\beta\alpha^2) = 
 \int_0^{\infty} {\widehat F}\Big(\frac{\ell}{N_0tu}; \frac{X}{v\beta \alpha^2}, \frac{t}{v\beta\alpha^2}\Big) t^{s-1} dt, 
 \end{equation} 
 which, using the definition of ${\widehat F}$, may be expressed as 
 \begin{equation} 
 \label{10.3.9} 
\frac{X^{s+1}}{v\beta\alpha^2}  {\check \Phi}(s) \int_0^{\infty} W\Big(\frac 1{y} \Big) e\Big(-\frac{\kappa \ell y}{N_0uv\beta\alpha^2}\Big) \frac{dy}{y^{s+1}}.
 \end{equation}  
 By Mellin inversion, the sum over $n$ in \eqref{10.3.7} is, for any $c>0$,  
 \begin{equation} 
 \label{10.3.10} 
 \frac{1}{2\pi i} \int_{(c)} {\widetilde F}(s;\ell,\beta\alpha^2)  \sum_{(n,N_0)=1} \frac{a(n)}{n^{\frac 32+s}} 
 \Big(\frac{v\beta\alpha^2}{n}\Big) \psi(n) \tau_{\ell}(nu) ds. 
 \end{equation}  
 Now from \eqref{10.3.9}, and using that $W(\xi)=L_a(\frac 12)+O(\xi^{\frac 12-\epsilon})$ as $\xi\to 0$,  we may obtain an analytic continuation of ${\widetilde F}(s;\ell, \beta\alpha^2)$ to 
 the region Re$(s)>-\frac 12+\epsilon$, and that it is bounded in that region by $\ll_A X^{1+\text{Re}(s)}/(v\beta \alpha^2 (1+|s|)^A)$ for any $A>0$.  Furthermore, expressing $\tau_\ell$ in terms of the multiplicative $G_{\ell}$ and $G_{-\ell}$ and 
 using Lemma 5, we see that the sum over $n$ above may be expressed in terms of $G_{\pm \ell}(u)$ times $L(1+s,E \times \psi \chi_{\pm v\beta \ell})$ times certain Euler factors at primes $p| N_0 \ell v\beta\alpha^2$.  
 Using the convexity bound for $L$-functions, we may bound this quantity in the region Re$(s)>-\frac 12+\epsilon$ by $\ll uX^{\epsilon} (v\beta \ell(1+|s|))^{\frac 12+\epsilon}$.   Therefore we conclude that the quantity in \eqref{10.3.10} is bounded 
 by 
 $$ 
 \ll \frac{X^{\frac 12+\epsilon}}{v^{\frac 12} \beta^{\frac 12} \alpha^2} u \ell^{\frac 12+\epsilon}. 
 $$  
 Using this estimate in \eqref{10.3.7} and summing over all $1\le |\ell| \le N_0uvY X^{\epsilon}$, we conclude that 
 these terms contribute to \eqref{10.3.2} an amount bounded by 
 $ \ll   u^{\frac 32} v Y^{\frac 32} X^{\frac 12+\epsilon}$.  We conclude that the contribution of  the remainder terms 
 arising from $\beta \alpha^2 \le Y$ is 
 \begin{equation}
 \label{10.3.11} 
 \ll  u^{\frac 32} v Y^{\frac 32} X^{\frac 12+\epsilon} + X^{-1} \ll  u^{\frac 32} v Y^{\frac 32} X^{\frac 12+\epsilon}. 
 \end{equation}

 %

\subsection{Completion of the proof}

Choose $Y=X^{\frac 14}v^{-\frac 12} u^{-\frac 34}$, and the proposition follows in view of \eqref{10.2}, \eqref{G2} and \eqref{10.3.11}. 

\bibliography{MomRefs}{}

\begin{thebibliography}{10}

\bibitem{ChLi2}
Vorrapan Chandee and Xiannan Li.
\newblock The eighth moment of {D}irichlet {$L$}-functions.
\newblock {\em preprint, arXiv:1303.4482}, 2013.

\bibitem{Coates}
John Coates, Yongxiong Li, Ye~Tian, and Shuai Zhai.
\newblock Quadratic twists of elliptic curves.
\newblock {\em preprint, arXiv:1312.3884}, 2013.

\bibitem{CFKRS}
J.~B. Conrey, D.~W. Farmer, J.~P. Keating, M.~O. Rubinstein, and N.~C. Snaith.
\newblock Integral moments of {$L$}-functions.
\newblock {\em Proc. London Math. Soc. (3)}, 91(1):33--104, 2005.

\bibitem{CIS}
J.~B. Conrey, H.~Iwaniec, and K.~Soundararajan.
\newblock The sixth power moment of {D}irichlet {$L$}-functions.
\newblock {\em Geom. Funct. Anal.}, 22(5):1257--1288, 2012.

\bibitem{CKRS}
J.~B. Conrey, J.~P. Keating, M.~O. Rubinstein, and N.~C. Snaith.
\newblock Random matrix theory and the {F}ourier coefficients of
  half-integral-weight forms.
\newblock {\em Experiment. Math.}, 15(1):67--82, 2006.

\bibitem{Del1}
Christophe Delaunay.
\newblock Moments of the orders of {T}ate-{S}hafarevich groups.
\newblock {\em Int. J. Number Theory}, 1(2):243--264, 2005.

\bibitem{Del2}
Christophe Delaunay.
\newblock Heuristics on class groups and on {T}ate-{S}hafarevich groups: the
  magic of the {C}ohen-{L}enstra heuristics.
\newblock In {\em Ranks of elliptic curves and random matrix theory}, volume
  341 of {\em London Math. Soc. Lecture Note Ser.}, pages 323--340. Cambridge
  Univ. Press, Cambridge, 2007.

\bibitem{DelWat}
Christophe Delaunay and Mark Watkins.
\newblock The powers of logarithm for quadratic twists.
\newblock In {\em Ranks of elliptic curves and random matrix theory}, volume
  341 of {\em London Math. Soc. Lecture Note Ser.}, pages 189--193. Cambridge
  Univ. Press, Cambridge, 2007.

\bibitem{DGH}
Adrian Diaconu, Dorian Goldfeld, and Jeffrey Hoffstein.
\newblock Multiple {D}irichlet series and moments of zeta and {$L$}-functions.
\newblock {\em Compositio Math.}, 139(3):297--360, 2003.

\bibitem{EFKAMM}
Bruno Eckhardt, Shmuel Fishman, Jonathan Keating, Oded Agam, J{\" o}rg Main,
  and Kirsten M{\" u}ller.
\newblock Approach to ergodicity in quantum wave functions.
\newblock {\em Phys. Rev. E}, 52:5893--5903, 1995.

\bibitem{Gold1}
Dorian Goldfeld.
\newblock Conjectures on elliptic curves over quadratic fields.
\newblock In {\em Number theory, {C}arbondale 1979 ({P}roc. {S}outhern
  {I}llinois {C}onf., {S}outhern {I}llinois {U}niv., {C}arbondale, {I}ll.,
  1979)}, volume 751 of {\em Lecture Notes in Math.}, pages 108--118. Springer,
  Berlin, 1979.

\bibitem{GrSo}
Andrew Granville and K.~Soundararajan.
\newblock Sieving and the {E}rd{\H o}s-{K}ac theorem.
\newblock In {\em Equidistribution in number theory, an introduction}, volume
  237 of {\em NATO Sci. Ser. II Math. Phys. Chem.}, pages 15--27. Springer,
  Dordrecht, 2007.

\bibitem{Har}
A.~J. Harper.
\newblock Sharp conditional bounds for moments of the {R}iemann zeta function.
\newblock {\em Preprint, arXiv:1305.4618}, 2013.

\bibitem{HB3}
D.~R. Heath-Brown.
\newblock Fractional moments of the {R}iemann zeta function.
\newblock {\em J. London Math. Soc. (2)}, 24(1):65--78, 1981.

\bibitem{HB}
D.~R. Heath-Brown.
\newblock A mean value estimate for real character sums.
\newblock {\em Acta Arith.}, 72(3):235--275, 1995.

\bibitem{Ho}
R.~D. Hough.
\newblock The distribution of the logarithm in an orthogonal and a symplectic
  family of $l$-functions.
\newblock {\em Forum Mathematicum}, 26:523--546, 2014.

\bibitem{HY}
C.~P. Hughes and Matthew~P. Young.
\newblock The twisted fourth moment of the {R}iemann zeta function.
\newblock {\em J. Reine Angew. Math.}, 641:203--236, 2010.

\bibitem{Iw}
Henryk Iwaniec.
\newblock On the order of vanishing of modular {$L$}-functions at the critical
  point.
\newblock {\em S\'em. Th\'eor. Nombres Bordeaux (2)}, 2(2):365--376, 1990.

\bibitem{IK}
Henryk Iwaniec and Emmanuel Kowalski.
\newblock {\em Analytic number theory}, volume~53 of {\em American Mathematical
  Society Colloquium Publications}.
\newblock American Mathematical Society, Providence, RI, 2004.

\bibitem{KaSa}
Nicholas~M. Katz and Peter Sarnak.
\newblock {\em Random matrices, {F}robenius eigenvalues, and monodromy},
  volume~45 of {\em American Mathematical Society Colloquium Publications}.
\newblock American Mathematical Society, Providence, RI, 1999.

\bibitem{KeSn1}
J.~P. Keating and N.~C. Snaith.
\newblock Random matrix theory and {$L$}-functions at {$s=1/2$}.
\newblock {\em Comm. Math. Phys.}, 214(1):91--110, 2000.

\bibitem{KeSn2}
J.~P. Keating and N.~C. Snaith.
\newblock Random matrix theory and {$\zeta(1/2+it)$}.
\newblock {\em Comm. Math. Phys.}, 214(1):57--89, 2000.

\bibitem{LS}
Wenzhi Luo and Peter Sarnak.
\newblock Quantum variance for {H}ecke eigenforms.
\newblock {\em Ann. Sci. \'Ecole Norm. Sup. (4)}, 37(5):769--799, 2004.

\bibitem{Miller}
Robert~L. Miller.
\newblock Proving the {B}irch and {S}winnerton-{D}yer conjecture for specific
  elliptic curves of analytic rank zero and one.
\newblock {\em LMS J. Comput. Math.}, 14:327--350, 2011.

\bibitem{RaSo}
Maksym Radziwi{\l}l and Kannan Soundararajan.
\newblock Continuous lower bounds for moments of zeta and {$L$}-functions.
\newblock {\em Mathematika}, 59(1):119--128, 2013.

\bibitem{Rubin}
Karl Rubin.
\newblock Fudge factors in the {B}irch and {S}winnerton-{D}yer conjecture.
\newblock In {\em Ranks of elliptic curves and random matrix theory}, volume
  341 of {\em London Math. Soc. Lecture Note Ser.}, pages 233--236. Cambridge
  Univ. Press, Cambridge, 2007.

\bibitem{RuSo1}
Z.~Rudnick and K.~Soundararajan.
\newblock Lower bounds for moments of {$L$}-functions.
\newblock {\em Proc. Natl. Acad. Sci. USA}, 102(19):6837--6838, 2005.

\bibitem{RuSo2}
Z.~Rudnick and K.~Soundararajan.
\newblock Lower bounds for moments of {$L$}-functions: symplectic and
  orthogonal examples.
\newblock In {\em Multiple {D}irichlet series, automorphic forms, and analytic
  number theory}, volume~75 of {\em Proc. Sympos. Pure Math.}, pages 293--303.
  Amer. Math. Soc., Providence, RI, 2006.

\bibitem{So4}
K.~Soundararajan.
\newblock Nonvanishing of quadratic {D}irichlet {$L$}-functions at
  {$s=\frac12$}.
\newblock {\em Ann. of Math. (2)}, 152(2):447--488, 2000.

\bibitem{SoY}
K.~Soundararajan and Matthew~P. Young.
\newblock The second moment of quadratic twists of modular {$L$}-functions.
\newblock {\em J. Eur. Math. Soc. (JEMS)}, 12(5):1097--1116, 2010.

\bibitem{So}
Kannan Soundararajan.
\newblock Moments of the {R}iemann zeta function.
\newblock {\em Ann. of Math. (2)}, 170(2):981--993, 2009.

\bibitem{Zhao}
Peng Zhao.
\newblock Quantum variance of {M}aass-{H}ecke cusp forms.
\newblock {\em Comm. Math. Phys.}, 297(2):475--514, 2010.

\end{thebibliography}
\bibliographystyle{plain}

\end{document}